\newtheorem{theorem}{Theorem}[section]
\newtheorem{lemma}[theorem]{Lemma}
\newtheorem{definition}[theorem]{Definition}
\newtheorem{proposition}[theorem]{Proposition}
\newtheorem{corollary}[theorem]{Corollary}
\newtheorem{remark}[theorem]{Remark}
\DeclareMathOperator{\argmin}{argmin}
\newcommand{\lra}{\longrightarrow}
\newcommand{\se}[1]{\section{#1}}
\newcommand{\sse}[1]{\subsection{#1}}
\newcommand{\be}{\begin{equation}}
\newcommand{\ee}{\end{equation}}
\newcommand{\bea}{\begin{eqnarray*}}
\newcommand{\eea}{\end{eqnarray*}}
\newcommand{\bean}{\begin{eqnarray}}
\newcommand{\eean}{\end{eqnarray}}
\newcommand{\ben}{\begin{enumerate}}
\newcommand{\een}{\end{enumerate}}
\newcommand{\bi}{\begin{itemize}}
\newcommand{\ei}{\end{itemize}}
\newcommand{\brem}{\begin{remark}}
\newcommand{\erem}{\end{remark}}
\newcommand{\bcen}{\begin{center}}
\newcommand{\ecen}{\end{center}}
\newcommand{\bsv}{\begin{semiverbatim}}
\newcommand{\esv}{\end{semiverbatim}}
\newcommand{\bt}{\begin{theorem}}
\newcommand{\et}{\end{theorem}}
\newcommand{\bl}{\begin{lemma}}
\newcommand{\el}{\end{lemma}}
\newcommand{\bd}{\begin{definition}}
\newcommand{\ed}{\end{definition}}
\newcommand{\bc}{\begin{corollary}}
\newcommand{\ec}{\end{corollary}}
\newcommand{\bp}{\begin{proposition}}
\newcommand{\ep}{\end{proposition}}
\newcommand{\bbE}{{ \mathbb{E}}}
\newcommand{\bbR}{ \mathbb{R}}
\newcommand{\bbZ}{ \mathbb{Z}}
\newcommand{\calC}{\mathcal{C}}
\newcommand{\calF}{\mathcal{F}}
\newcommand{\calL}{\mathcal{L}}
\newcommand{\calR}{\mathcal{R}}
\newcommand*{\addFileDependency}[1]{
  \typeout{(#1)}
  \@addtofilelist{#1}
  \IfFileExists{#1}{}{\typeout{No file #1.}}
}
\title{Estimation of Conditional Mean Operator under the Bandable Covariance Structure}
\author[1]{Kwangmin Lee}
\author[2]{Kyoungjae Lee}
\author[1]{Jaeyong Lee}
\affil[1]{Department of Statistics, Seoul National University}
\affil[2]{Department of Statistics, Inha University}
\begin{document}

\maketitle
%\setcounter{secnumdepth}{1}
%\setcounter{tocdepth}{2}
%\tableofcontents

\begin{abstract}
	We consider high-dimensional multivariate linear regression models, where the joint distribution of covariates and response variables is a multivariate normal distribution with a bandable covariance matrix.
	The main goal of this paper is to estimate the regression coefficient matrix, which is a function of the bandable covariance matrix. 
	Although the tapering estimator of covariance has the minimax optimal convergence rate for the class of bandable covariances, 
	we show that it has a sub-optimal convergence rate for the regression coefficient; that is, a minimax estimator for the class of bandable covariances may not be a minimax estimator for its functionals.
	We propose the blockwise tapering estimator of the regression coefficient, which has the minimax optimal convergence rate for the regression coefficient under the bandable covariance assumption.
	We also propose a Bayesian procedure called the blockwise tapering post-processed posterior of the regression coefficient and show that the proposed Bayesian procedure has the minimax optimal convergence rate for the regression coefficient under the bandable covariance assumption.
	We show that the proposed methods outperform the existing methods via numerical studies.
	
	% Please include a maximum of seven keywords
%	\keywords{ Bandable covariance, Conditional mean, Covariance estimation, Minimax analysis, Post-processed posterior}
\end{abstract}

\section{Introduction}

Consider the multivariate linear regression model 
\bea
Y_i = C X_i + \epsilon_i, \quad i =1,\ldots, n,
\eea
where $Y_i\in\bbR^q$ is a response vector, $X_i\in\bbR^{p_0}$ is a covariate vector,  $C \in \bbR^{q \times p_0}$ is a regression coefficient matrix, and 
$\epsilon_i \in \bbR^q, i=1,2, \ldots, n,$ are independent and identically distributed error vectors from a $q$-dimensional normal distribution with mean zero.  
The multivariate linear regression model has been used for various fields of applications. 
For example, \cite{zhao2018short} analyzed atmospheric data using the model to forecast PM$2.5$ concentration, and \cite{qian2020large} used the model to analyze the genomics data.

For the estimation of the multivariate linear regression coefficient $C\in \bbR^{q \times p_0}$, one of the most commonly used approaches is a penalized least square method, which finds the minimizer of the following objective function,
\bea
f(C) \,=\, \sum_{i=1}^n||Y_i -C X_i||_F^2 + P(C)   ,
\eea
where $P(C)$ is a penalty term.
The penalized least square method penalizes the objective function when the estimate $C$ deviates from the low dimensional structure which the true coefficient matrix $C$ is assumed to have, and it is especially useful under high-dimensional settings, where $p_0$ and $q$ can grow to infinity as $n\to \infty$. 
Various penalized methods for the multivariate linear regression model have been suggested \citep{chen2012sparse,chen2013reduced,uematsu2019sofar}.

Employing a covariance estimation method is another approach for the estimation of the regression coefficient.
The coefficient matrix $C$ can be considered as a function of the joint covariance matrix of the covariate vector $X\in\bbR^{p_0}$ and the response vector $Y\in\bbR^q$. 
Assume that $Z=(X^T , Y^T)^T \in \bbR^{p_0 +q}$ follows a joint distribution with a mean vector $\mu$ and a covariance matrix $\Sigma$ such that
\bea
\mu &=& \begin{pmatrix} \mu_X \\ \mu_Y \end{pmatrix} \\
\Sigma &=& 
\begin{pmatrix}
	\Sigma_{XX} & \Sigma_{XY}\\
	\Sigma_{YX} & \Sigma_{YY}
\end{pmatrix},
\eea
where $\mu_X\in\bbR^{p_0}$, $\mu_Y\in\bbR^{q}$, $\Sigma_{XX}\in \bbR^{p_0\times p_0}$ and $\Sigma_{YY}\in\bbR^{q\times q}$.
Then, we have 
\bea
(\mu_0 , \Psi_0) &:=&  (\mu_Y-\Sigma_{YX}\Sigma_{XX}^{-1}\mu_X, \,\, \Sigma_{YX}\Sigma_{XX}^{-1}) \\
&=& \argmin_{(\mu, \Psi)}  \bbE \{ (Y-\mu-\Psi X) (Y-\mu-\Psi X)^T \},
\eea
and $\mu_0 + \Psi_0 x$ is the conditional mean of $Y$ given $X= x$ if $Z$ follows the multivariate Gaussian distribution. 
Note that $\mu_0$ is the zero vector if we assume that $\mu_X$ and $\mu_Y$ are zero vectors. 
In this case, the coefficient matrix $C$ in the multivariate regression model corresponds to $\Psi_0 = \Sigma_{YX} \Sigma_{XX}^{-1}$, which is a function of the covariance matrix $\Sigma$ and is called {\it the conditional mean operator}.
Thus, covariance estimators can be used for the estimation of the conditional mean operator. 

We need to consider a high-dimensional covariance estimation method when we use a covariance estimator for the multivariate regression model under high-dimensional settings. 
Suppose $Z_1, Z_2, \ldots, Z_n$ are independent and identically generated from a $p$-dimensional distribution with mean zero and covariance matrix $\Sigma$. We refer to the estimation of covariance $\Sigma$ as high-dimensional covariance estimation when $p$ is assumed to go to infinity as $n\lra \infty$. 
Since traditional covariance estimation methods, such as the sample covariance matrix and the Bayesian method by the inverse-Wishart prior, are not consistent when $p$ is larger than $n$ \citep{johnstone2009consistency,lee2018optimal}, various structural assumptions on covariance matrices have been used to reduce the number of effective parameters. For example, the banded covariances \citep{lee2020post}, the bandable covariances  \citep{bickel2008regularized}, sparse covariances \citep{cai2013sparse} and sparse spiked covariances \citep{cai2015optimal} have been considered. 
These structural assumptions can be used in the joint covariance matrix of covariates and response variables when we employ covariance estimation for the multivariate regression under the high-dimensional settings.

In this paper, we consider the multivariate linear regression model, where the covariate vector and the response vector jointly follow a multivariate normal distribution with a bandable covariance matrix.
Under the bandable covariance assumption, the farther apart two variables are, the smaller their covariance is. %The bandable covariance has been commonly assumed in spatial and temporal data analyses. 
On the frequentist side, \cite{cai2010optimal,cai2012minimax} proved that {\it the tapering estimator of covariance} has the minimax optimal convergence rates for the class of bandable covariances under the spectral norm, Frobenius norm, and matrix $l_1$ norm. 
Therefore, a naive approach would be estimating the conditional mean operator based on the tapering estimator of covariance (or other minimax covariance estimators).

Unfortunately, even if a covariance estimator $\hat{\Sigma}$ has the minimax optimal convergence rate for the covariance $\Sigma$, it does not imply that $f(\hat{\Sigma})$ has also the minimax optimal convergence rate for $f(\Sigma)$ where $f$ is a function on the space of covariances.
Thus, the estimator for $\Sigma_{YX} \Sigma_{XX}^{-1}$ based on the tapering estimator of covariance may not have the minimax optimal convergence rate.
Furthermore, there is no Bayesian method achieving the minimax posterior convergence rate for the class of bandable covariances.
Note that \cite{silva2009hidden}, \cite{khare2011wishart} and \cite{lee2020post} proposed Bayesian procedures for banded covariances, but the class of bandable covariances considered in this paper is larger than the class of banded covariances.

We investigate the decision-theoretic property of the tapering estimator when the parameter of interest is the conditional mean operator, $\Sigma_{YX}\Sigma_{XX}^{-1}$, instead of the covariance itself.
We define {\it the tapering estimator of regression coefficient} as the plug-in estimator, the tapering estimator of covariance plugged into the conditional mean operator, and show that the tapering estimator of regression coefficient has a sub-optimal convergence rate for $\Sigma_{YX}\Sigma_{XX}^{-1}$ under the bandable covariance assumption.
We propose a minimax optimal estimator for $\Sigma_{YX}\Sigma_{XX}^{-1}$ by modifying the tapering estimator of regression coefficient and call it {\it the blockwise tapering estimator of regression coefficient}.

As a Bayesian procedure for the conditional mean operator under the bandable covariance assumption, we propose post-processed posterior method \citep{lee2020post}.
A post-processed posterior \citep{lee2020post} is a posterior constructed by transforming posterior samples from the initial posterior, which is typically a computationally convenient posterior. 
This idea is especially useful when it is difficult to impose a prior distribution on a restricted parameter space due to an unknown normalizing constant.
For a given parameter space $\Theta^*$, suppose that we are interested in restricted parameter space, $\Theta \subset \Theta^*$.
A post-processed posterior can be obtained by generating samples from an initial posterior on $\Theta^*$ and post-processing the posterior samples so that the transformed post-processed samples belong to $\Theta$. When the post-processing function is a projection map from $\Theta^*$ to $\Theta$, the method is called the posterior projection method. The posterior projection method has been suggested for various settings including \cite{dunson2003bayesian}, \cite{gunn2005transformation}, \cite{lin2014bayesian} and \cite{chakraborty2020convergence}, and was investigated in general aspects by \cite{patra2018constrained}.
The idea of transforming posterior samples was also used for the inference on covariance or precision matrices in \cite{lee2020post} and \cite{bashir2018post}.

We suggest two post-processed posteriors for the conditional mean operator. Both methods use the inverse-Wishart distribution as the initial prior distribution on the unconstrained covariance matrix space and use the tapering function and the blockwise tapering function as the post-processing functions for the conditional mean operator $\Sigma_{YX}\Sigma_{XX}^{-1}$.  We present the asymptotic analysis to justify the proposed post-processed posteriors, and show that the post-processed posterior by the blockwise tapering function has the minimax optimal convergence rate.

The rest of the paper is organized as follows. In Section \ref{sec:CM_bandable}, we introduce the blockwise tapering estimator for the inference of the conditional mean operator under the bandable covariance assumption and show that this estimator has the minimax convergence rate. In Section \ref{sec:btppp}, we introduce the post-processed posteriors for the conditional mean operator, and present the posterior convergence rates.
Simulation studies and real data analysis are given in Section \ref{sec:numerical}. We conclude this paper with a discussion section.
The proofs of theorems that give the upper bound and lower bound of the convergence rate of the blockwise tapering estimator are given in Appendix \ref{sec:proof}, and the proofs of the other theorems and lemma are given in the supplementary material.

\se{Blockwise tapering estimator and minimax analysis}\label{sec:CM_bandable}

\sse{Notation}

Let $q$, $k$ and $l$ be positive integers with $l\vee k\le q$. 
For a $q\times q$-matrix $\Sigma$ and positive real numbers $\sigma_{ij}$, $1\le i,j\le q$, let $\Sigma= (\sigma_{ij})_{1\le i,j\le q} = (\sigma_{ij})$ when $\sigma_{ij}$ is equal to the $(i,j)$ element of $\Sigma$.
We define sub-matrix operators $M_l^{(k)} :\bbR^{q\times q} \mapsto \bbR^{k^*\times k^*}$, where $k^*=\{(l+k-1)\wedge q\}-(l\vee 1) +1$, and $M_l^{*(k)} :\bbR^{q\times q} \mapsto \bbR^{q\times q}$ as
%$M_l^{*(k)}(\cdot),M^*(\Sigma;a:b,c:d)(\cdot):\bbR^{q\times q} \mapsto \bbR^{q\times q}$
\bea
M_l^{(k)}(\Sigma) &=& (\sigma_{ij})_{(l\vee 1) \le i,j \le \{(l+k-1) \wedge q\}}\\
M_l^{*(k)}(\Sigma) &=& (\sigma_{ij}I[(l \vee 1) \le i,j \le \{(l+k-1)\wedge q\} ])_{1\le i,j\le q},
%M^*(\Sigma;a:b,c:d) &=& (\sigma_{ij}I(a\le i\le b, c\le j\le d) ),
\eea
for $\Sigma = (\sigma_{ij})_{1\le i,j\le q}$. 
Let $\Sigma_{a:b,c:d}$ be the sub-block matrix of $\Sigma\in\bbR^{q\times q}$ with $(a,a+1,\ldots,b-1,b)$ rows and $(c,c+1,\ldots,d-1,d)$ columns for positive integers $a,b,c$ and $d$ with $1\le a<b\le q$ and $1\le c< d\le q$.  We also let $X_{a:b} = (x_a,x_{a+1},\ldots,x_{b-1},x_b)\in\bbR^{b-a+1}$ for a vector $X = (x_1,x_2,\ldots,x_q)\in \bbR^q$ and positive integers $a$ and $b$ with $1\le a<b\le q$.

For a $q\times q$-matrix $\Sigma =(\sigma_{ij})_{1\le i,j\le q}$ and a positive integer $k$ with $k \le q$, 
define the tapering function $T_k(\Sigma)$, which was first defined in \cite{cai2010optimal}, as
\bea
T_k(\Sigma)=(w^{(k)}_{ij}\sigma_{ij})_{1\le i,j\le q},
\eea
where 
\[
w_{ij}^{(k)}= 
\begin{cases}
	1,& \text{when } |i-j|\le k/2\\
	2-\frac{|i-j|}{k/2},              & \text{when } k/2<|i-j|<k\\
	0, & \text{otherwise}
\end{cases}.
\] 

For any sequences $a_n$ and $b_n$ of positive real numbers, we denote $a_n=o(b_n)$ if $\lim\limits_{n\lra \infty}a_n/b_n=0$, and $a_n=O(b_n)$ if $\limsup\limits_{n\lra \infty}a_n/b_n=C$ for a positive constant $C$.
We denote $a_n \preceq b_n$ if $a_n\le C b_n$ for all sufficiently large $n$ and a positive constant $C$.

Let $||\Sigma || =||\Sigma ||_2= \{ \lambda_{\max}(\Sigma\Sigma^T) \}^{1/2}$ be the spectral norm of a covariance matrix $\Sigma$, where $\lambda_{\max}(\Sigma)$ is the maximum eigenvalue of $\Sigma$. Given positive integers $p$ and $p_0$ with $p_0<p$, let $A_{XX} =A_{(p_0+1):p,1:p_0}$ and $A_{YX} = A_{(p_0+1):p,1:p_0}$ for a positive $p\times p$-matrix $A$. We also let $\Sigma_{0,XX}$ and $\Sigma_{0,YX}$ denote $(\Sigma_0)_{XX}$ and $ (\Sigma_0)_{YX}$, respectively.

\sse{Blockwise Tapering Estimator} 
Let $n$, $p$ and $p_0$ be positive integers with $p_0< p$. 
Suppose  $Z_1,Z_2,\ldots,Z_n$ are independent and identically distributed from a $p$-variate Gaussian distribution with mean zero and covariance matrix $\Sigma_0$, which is denoted by  $N_p(0,\Sigma_0)$, where $Z_i = (X_i^T,Y_i^T)^T$, $X_i\in \bbR^{p_0}$ and $Y_i\in\bbR^{p-p_0}$ for $i\in\{1,2,\ldots,p\}$. When only the first $p_0$ elements of $Z_i$, i.e. $X_i$, are given, the conditional mean vector for the other $p - p_0$ variables is 
\bean
\Sigma_{0,YX}(\Sigma_{0,XX})^{-1} X_i\label{formula:CM}.
\eean 
The conditional mean operator  $ \Sigma_{0,YX}(\Sigma_{0,XX})^{-1}$ in \eqref{formula:CM}  is the estimand we focus on in this paper. 
We define the transformation $\psi$ from a covariance to the conditional mean operator as
\bea
\psi(\Sigma) :=\psi(\Sigma;p_0)= \Sigma_{YX}(\Sigma_{XX})^{-1} ,
\eea
for $\Sigma\in\calC_p$, where $\calC_p$ is the set of all $p\times p$-dimensional positive definite matrices.

We assume $\Sigma_0$ belongs to a class of bandable covariances, $\calF_\alpha$, which is defined as 
\bea
\calF_\alpha &:=& \calF_{p,\alpha}(M, M_0, M_1) \nonumber \\
&=& \Big\{ \Sigma = (\sigma_{ij})_{1\le i,j\le p} \in \calC_p: \sum_{ (i,j): |i-j| \geq k} |\sigma_{ij} | \leq M k^{-\alpha}, \forall k \geq 1, \lambda_{\max}(\Sigma) \leq   M_0 , \lambda_{\min}(\Sigma) \geq   M_1  \Big\},
\eea
for some positive constants $\alpha, M>0$ and $0<M_1 < M_0$, where $ \lambda_{min}(\Sigma)$ is the minimum eigenvalue of $\Sigma$.
\cite{bickel2008regularized} and \cite{cai2010optimal} also considered the same class of bandable covariances except the minimum eigenvalue condition.

A natural estimator for $\psi(\Sigma_0)$ is the plug-in estimator, the tapering estimator of covariance plugged into $\psi$, for the tapering estimator of covariance has the minimax optimal convergence rate for the class of bandable covariances under the spectral norm loss \citep{cai2010optimal}. For the positive-definiteness is necessary for the covariance estimator, we modify the tapering estimator of covariance so that it is positive-definite and call it  {\it adjusted tapering estimator of covariance}: 
\bea
T_k^{(\epsilon_n)}(S_n) := T_k(S_n) + ([\epsilon_n -\lambda_{\min}\{T_k(S_n)\}]\vee 0 )I_p,
\eea
where $\epsilon_n > 0$ is the positive-definite adjustment parameter, $S_n$ is the sample covariance matrix $\sum_{i=1}^n Z_i Z_i^T/n$, and $I_p$ is the $p\times p$ identity matrix. We call the plug-in estimator with adjusted tapering estimator of covariance {\it the tapering estimator of regression coefficient}, in short {\it the tapering estimator}.

Since every column vector in the tapering estimator is not the zero vector with probability one, the tapering estimator uses all variables in a given covariate vector when the estimator is used as the regression coefficient.
In other words, in the variable selection perspective, all variables are selected when the tapering estimator is used. 
Note that selecting out negligible covariates can increase the accuracy of a regression estimator, and partial correlations between covariates and responses have been used as a criterion for the variable selection \citep{li2017variable, buhlmann2010variable}. 
We find covariates which have weak partial correlations with the response variables under the bandable covariance assumption by investigating the elements in the inverse matrix of the covariance, called the precision matrix in Theorem \ref{theorem:bandableinv}.
\begin{theorem}\label{theorem:bandableinv}
	Suppose $\Sigma_0\in\calF_{p,\alpha}(M,M_0,M_1)$, and let $\Sigma_0^{-1}=(w_{ij})$. 
	There exist some positive constants $C$ and $\lambda$ depending only on $M_0$, $M_1$ and $M$ such that 
	\bea
	\max_j\sum_i \{ |w_{ij}|: |i-j|>ak\log k\} 
	&\le& C(k^{-a\lambda+1} + k^{-\alpha}),
	\eea
	for all $a>0$ and all sufficiently large integer $k$ with $p> k\vee (ak\log k)$.	 
\end{theorem}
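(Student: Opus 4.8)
The plan is to bracket $\Sigma_0^{-1}$ between the inverse of a banded truncation of $\Sigma_0$, for which sharp off-diagonal decay estimates are classical, and a perturbation correction governed by the bandable tail condition. First I would fix $k$ and let $B_k$ be the matrix obtained from $\Sigma_0$ by zeroing every entry with $|i-j|>k$. The bandable hypothesis bounds every column sum of $\Sigma_0-B_k$ by $Mk^{-\alpha}$, and since $\Sigma_0-B_k$ is symmetric this yields $\|\Sigma_0-B_k\|\le Mk^{-\alpha}$. Combined with $\lambda_{\min}(\Sigma_0)\ge M_1$ and $\lambda_{\max}(\Sigma_0)\le M_0$, this shows that for all large $k$ the truncation $B_k$ is positive definite with spectrum in $[M_1/2,2M_0]$, hence with condition number bounded by a constant $\kappa_0$ depending only on $M_0,M_1$.

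Next I would invoke the Demko--Moss--Smith decay bound for inverses of banded positive definite matrices: because $B_k$ has bandwidth $k$ and condition number at most $\kappa_0$, its inverse obeys $|(B_k^{-1})_{ij}|\le C_2\,q^{|i-j|/k}$ with $q=(\sqrt{\kappa_0}-1)/(\sqrt{\kappa_0}+1)\in(0,1)$ and $C_2=C_2(M_0,M_1)$. Setting $\lambda:=\log(1/q)$ and summing the geometric tail beyond $ak\log k$ gives $\sum_{i:|i-j|>ak\log k}|(B_k^{-1})_{ij}|\preceq q^{a\log k}/(1-q^{1/k})$; since $q^{a\log k}=k^{-a\lambda}$ while $1-q^{1/k}\asymp\lambda/k$, this is $\preceq k^{-a\lambda+1}$. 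The stray factor of $k$, equivalently the $+1$ in the exponent, is exactly the reciprocal of the decay rate $1-q^{1/k}\asymp 1/k$, and this produces the first term of the bound.

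Finally I would pass to $\Sigma_0^{-1}$ through the resolvent identity $\Sigma_0^{-1}-B_k^{-1}=-B_k^{-1}(\Sigma_0-B_k)\Sigma_0^{-1}$ and estimate the far-off-diagonal column-$\ell_1$ mass of the correction by $\preceq k^{-\alpha}$, so that the triangle inequality with the previous step closes the proof. The perturbation $\Sigma_0-B_k$ is supported on $|l-m|>k$ with column-$\ell_1$ mass at most $Mk^{-\alpha}$, and I would expand the $(i,j)$ entry of the correction, splitting the summation indices according to whether they fall within distance $ak\log k/2$ of the target column $j$: near $j$ the decay of the inverse factor kicks in, while far from $j$ the perturbation must bridge a gap of order $k\log k$, so the relevant bandable tail is evaluated at that large scale and is $\preceq (k\log k)^{-\alpha}$.

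The hard part will be this correction term. The naive submultiplicative bound $\|B_k^{-1}(\Sigma_0-B_k)\Sigma_0^{-1}\|_{\ell_1}\le\|B_k^{-1}\|_{\ell_1}\,\|\Sigma_0-B_k\|_{\ell_1}\,\|\Sigma_0^{-1}\|_{\ell_1}$ is fatally lossy, because exponential decay at rate $1/k$ gives only $\|B_k^{-1}\|_{\ell_1}=O(k)$, which would degrade the target $k^{-\alpha}$ to $k^{1-\alpha}$. The resolution is to exploit the concentration of $\Sigma_0^{-1}$ itself rather than that of $B_k^{-1}$: after replacing $B_k^{-1}$ by $\Sigma_0^{-1}$ up to higher-order terms, the crucial input is the a priori bound $\|\Sigma_0^{-1}\|_{\ell_1}=O(1)$, which makes the dominant contribution of the form $(\text{const})\cdot\|\Sigma_0^{-1}\|_{\ell_1}^{2}\cdot M\,(ak\log k)^{-\alpha}\preceq k^{-\alpha}$. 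Establishing this a priori $\ell_1$-concentration, equivalently that $\Sigma_0^{-1}$ inherits a bandable off-diagonal decay, is the genuine obstacle; it follows from the inverse-closedness of the Jaffard-type off-diagonal-decay matrix algebra, available precisely because the eigenvalues of $\Sigma_0$ are pinched in $[M_1,M_0]$, and once it is in hand the quantitative rate follows by feeding it back into the resolvent estimate.
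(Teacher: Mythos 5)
Your handling of the banded main term is correct and is the same mechanism the paper uses: truncate to the banded matrix $B_k$, use the column-sum tail condition plus symmetry to get $\|\Sigma_0-B_k\|_2\le Mk^{-\alpha}$ and hence a spectrum for $B_k$ pinched in an interval depending only on $M_0,M_1$ for large $k$, apply the Demko--Moss--Smith bound $|(B_k^{-1})_{ij}|\le C_2\,q^{|i-j|/k}$, and sum the geometric tail beyond $ak\log k$; your bookkeeping $q^{a\log k}/(1-q^{1/k})\asymp k^{-a\lambda}\cdot k$ with $\lambda=\log(1/q)$ is exactly where the exponent $-a\lambda+1$ in the statement comes from, with constants depending only on $M_0,M_1$ as required.

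The genuine gap is in your correction term, and it sits precisely at the point you yourself flag as ``the genuine obstacle.'' You correctly observe that naive $\ell_1$-submultiplicativity in the resolvent identity loses a factor of $k$, and that an a priori bound $\|\Sigma_0^{-1}\|_{\ell_1}=O(1)$ (with constant depending only on $M,M_0,M_1,\alpha$) would repair this via the self-improving identity $D=-\Sigma_0^{-1}E\Sigma_0^{-1}+DE\Sigma_0^{-1}$ with $D=\Sigma_0^{-1}-B_k^{-1}$ and $\|E\|_1\le Mk^{-\alpha}$. But your proposed justification of that a priori bound --- inverse-closedness of the Jaffard-type decay algebra --- requires elementwise decay $(1+|i-j|)^{-r}$ with $r>1$, whereas the bandable class only delivers $|\sigma_{ij}|\le M|i-j|^{-\alpha}$, and the theorem is asserted for \emph{every} $\alpha>0$. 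For $\alpha\le 1$ (the regime the paper actually exercises in its simulations, $\alpha\in\{0.1,0.3\}$) the corresponding decay class is not a Banach algebra and is not inverse-closed, and decay of order $\alpha\le 1$ could not imply uniformly bounded column sums in any case, since $\sum_d d^{-\alpha}$ diverges. Worse, the bound $\|\Sigma_0^{-1}\|_{\ell_1}=O(1)$ is essentially a constant-$k$ instance of the very theorem you are proving, so invoking it without an independent derivation is circular: a direct attempt via Neumann series or the resolvent at scale $k$ reproduces the lossy factor $\|B_k^{-1}\|_{\ell_1}=O(k)$ and stalls at $k^{1-\alpha}$ exactly as you noted. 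The paper isolates this missing ingredient as a standalone result, Lemma \ref{lemma:l1bandinv}, which asserts $\|\Sigma_0^{-1}-B_k(\Sigma_0)^{-1}\|_1\le C_1k^{-\alpha}$ and is proved separately in the supplement; your argument needs that lemma (or an equivalent argument valid for small $\alpha$) and, as written, does not supply it.
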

See the supplementary material for the proof.
Note \cite{lauritzen1996graphical} showed that the partial correlation between variable $i$ and variable $j$, $\rho_{ij}$, is 
\bea
\rho_{ij} = \frac{w_{ij}}{\sqrt{w_{ii} w_{jj}}},
\eea
where $\Sigma_0^{-1}=(w_{ij})$.
Since $|w_{ii}| \ge M_0^{-1}$ for all $i\in\{1,2,\ldots,p\}$, each element in response vector $(Z_i)_{p_0+1:p}$ has negligibly weak partial correlations with remote covariates $(Z_i)_j$, i.e. variables with $|j-p_0|$ large and $j\le p_0$, when $k$ is sufficiently large by Theorem \ref{theorem:bandableinv}.
Thus, selecting out these negligible covariates could yield a more accurate estimator for the conditional mean operator.

Based on the above argument, we propose  {\it the blockwise tapering estimator of regression coefficient}, in short {\it the blockwise tapering estimator}. 
Let $\bbZ$ be the set of all integers and $\lfloor x\rfloor= \max\{ z \in \bbZ : z\le x\} $.
For positive real numbers $a$ and $\epsilon_n$, and a positive integer $k$ with $2\lfloor ak\log k\rfloor\le p_0$,
define  {\it the blockwise tapering estimator} as 
\bean
\phi(S_n;2 \lfloor ak\log k\rfloor,\epsilon_n) &:=&\phi(S_n;p_0,2 \lfloor ak\log k\rfloor,\epsilon_n)\nonumber \\
&=& T_k(S_n)_{YX} \Lambda^{(\epsilon_n)} \{T_k(S_n)_{XX} ; 2 \lfloor ak\log k\rfloor\}\label{formula:cmoperator},
\eean
where $\Lambda^{(\epsilon_n)}(A;b)$ is defined as for a $p_0\times p_0$ matrix $A$
\bea
\Lambda^{(\epsilon_n)}(A;b)=
\begin{pmatrix}
	O_{(p_0-b) \times (p_0-b)}  & O_{(p_0-b) \times b} \\
	O_{(b \times (p_0-b)}  & \{M_{p_0-b+1}^{(b)}(A) + ([\epsilon_n-\lambda_{\min}\{ M_{p_0-b+1}^{(b)}(A) \}]\vee 0) I_{b} \}^{-1}
\end{pmatrix},
\eea
where $O_{c\times d}$ is the $c\times d$-zero matrix for positive integers $c$ and $d$.
Given a covariate vector $x\in \bbR^{p_0}$, the blockwise tapering estimator uses
only $x_{(p_0-2 \lfloor ak\log k\rfloor+1):p_0}$.
Thus, the covariates which have weak partial correlations with response variables are not used. 

\sse{ Minimax Analysis of Blockwise Tapering Estimator}
We give the convergence rates of the tapering and blockwise tapering estimators and show that the blockwise tapering estimator has the minimax convergence rate.
We use the loss function on $\bbR^{(p-p_0)\times p_0}$ 
\bean
L\{\hat{C}, \psi(\Sigma_0)\}= || \hat{C}  - \psi(\Sigma_0)||_2,\label{formula:loss}
\eean  
for a pair of parameter $\psi(\Sigma_0)$ and estimator $\hat{C}$.
The loss function gives the upper bound of the estimation error of $E(Y\mid X=x)$ given $x\in\bbR^{p_0}$, because the definition of the operator norm gives 
\bea
|| \hat Cx - E(Y\mid X=x)||_2  &=& ||\{\hat C  -  \psi(\Sigma_0)\}x||_2 \\
&\le& L\{ \hat C,\psi(\Sigma_0)\} ||x||_2.
\eea  
We show that the tapering estimator has a sub-optimal convergence rate under the loss function \eqref{formula:loss}, while the blockwise tapering estimator has the minimax optimal convergence rate.

Theorem \ref{theorem:taperupper} gives the convergence rate of the tapering estimator.
If we set $\epsilon_n$ such that $ p^{1/2}5^{k/2} n\exp(-\lambda n) \preceq \epsilon_n^2 \preceq (k+\log p)/n$, then the convergence rate is $ (k+\log p)/n + k^{-2\alpha}$, which is the same rate as the convergence rate of the tapering estimator of covariance \citep{cai2010optimal}. 
\begin{theorem}\label{theorem:taperupper}
	Suppose $\Sigma_0\in\calF_{p,\alpha}(M,M_0,M_1)$.
	Let $k$ be a positive integer with $k<p_0$.
	If $k \vee \log p =o(n)$, $\epsilon_n=O(1)$ and $\lfloor k/2 \rfloor > \{4M/\lambda_{\min}(\Sigma_0)\}^{1/\alpha}$, then there exist some positive constants $C$ and $\lambda$ depending only on $M$, $M_0$, $M_1$ and $\alpha$ such that 
	\bea
	E_{\Sigma_0} ( || \psi(\Sigma_0)  -  \psi\{T_k^{(\epsilon_n)}(S_n)\}||^2 ) \le C\Big\{ k^{-2\alpha} + \frac{k+\log p}{n} +\epsilon_n^2 +\frac{p^{1/2} 5^{k/2} \exp(-\lambda n)}{\epsilon_n^2}  \Big\},
	\eea
	for all sufficienly large $n$.
\end{theorem}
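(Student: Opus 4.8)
The plan is to reduce the problem to the already-known convergence rate of the tapering estimator of covariance \citep{cai2010optimal} on a high-probability event, and to let the positive-definite adjustment parameter $\epsilon_n$ absorb the low-probability complement. Write $\hat\Sigma = T_k^{(\epsilon_n)}(S_n)$, so that the estimand difference is $\psi(\hat\Sigma)-\psi(\Sigma_0)$. The first step is the algebraic identity
\bea
\psi(\hat\Sigma) - \psi(\Sigma_0) = (\hat\Sigma_{YX}-\Sigma_{0,YX})\hat\Sigma_{XX}^{-1} - \Sigma_{0,YX}\hat\Sigma_{XX}^{-1}(\hat\Sigma_{XX}-\Sigma_{0,XX})\Sigma_{0,XX}^{-1},
\eea
which follows from adding and subtracting $\Sigma_{0,YX}\hat\Sigma_{XX}^{-1}$ and using $\hat\Sigma_{XX}^{-1}-\Sigma_{0,XX}^{-1} = -\hat\Sigma_{XX}^{-1}(\hat\Sigma_{XX}-\Sigma_{0,XX})\Sigma_{0,XX}^{-1}$. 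Taking spectral norms, using that a sub-block has spectral norm no larger than the full matrix, together with $\|\Sigma_{0,YX}\|\le M_0$ and $\|\Sigma_{0,XX}^{-1}\|\le M_1^{-1}$ (these follow from $\Sigma_0\in\calF_\alpha$ and the eigenvalue interlacing property of principal submatrices), reduces the whole estimate to controlling $\|\hat\Sigma-\Sigma_0\|$ and $\|\hat\Sigma_{XX}^{-1}\|$.

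The second step splits the expectation over the event $\mathcal{A}=\{\|T_k(S_n)-\Sigma_0\|\le M_1/2\}$ and its complement. On $\mathcal{A}$ one has $\lambda_{\min}\{T_k(S_n)\}\ge M_1/2>0$, so the positive-definite adjustment perturbs the matrix by at most $\epsilon_n$ in spectral norm and keeps $\lambda_{\min}(\hat\Sigma_{XX})\ge M_1/2$; hence $\|\hat\Sigma_{XX}^{-1}\|\le 2M_1^{-1}$ and $\|\hat\Sigma-\Sigma_0\|\le \|T_k(S_n)-\Sigma_0\|+\epsilon_n$. Substituting these into the identity gives $\|\psi(\hat\Sigma)-\psi(\Sigma_0)\|\le C(\|T_k(S_n)-\Sigma_0\|+\epsilon_n)$ on $\mathcal{A}$. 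Taking expectations and invoking the covariance rate $E_{\Sigma_0}\|T_k(S_n)-\Sigma_0\|^2\preceq k^{-2\alpha}+(k+\log p)/n$ of \cite{cai2010optimal} produces exactly the first three terms $k^{-2\alpha}+(k+\log p)/n+\epsilon_n^2$.

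The third step handles $\mathcal{A}^c$, where I use only the deterministic guarantee $\lambda_{\min}(\hat\Sigma)\ge\epsilon_n$ built into the adjustment, so that $\|\hat\Sigma_{XX}^{-1}\|\le\epsilon_n^{-1}$ and $\|\psi(\hat\Sigma)\|\le\|\hat\Sigma\|/\epsilon_n$; combined with $\|\hat\Sigma\|\le 2\|T_k(S_n)\|+\epsilon_n$ this yields $\|\psi(\hat\Sigma)-\psi(\Sigma_0)\|^2\le C\epsilon_n^{-2}\|T_k(S_n)\|^2+C$. Applying Cauchy--Schwarz gives
\bea
E_{\Sigma_0}\big[\|T_k(S_n)\|^2 I(\mathcal{A}^c)\big]\le \big(E_{\Sigma_0}\|T_k(S_n)\|^4\big)^{1/2}\,\P(\mathcal{A}^c)^{1/2}.
\eea
The main obstacle is to show these two factors combine to the stated $\epsilon_n^{-2}p^{1/2}5^{k/2}\exp(-\lambda n)$. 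This requires, first, an exponential large-deviation bound $\P(\mathcal{A}^c)\preceq \exp(-\lambda n)$ for the spectral-norm deviation of the tapering estimator, and second, a fourth-moment bound $E_{\Sigma_0}\|T_k(S_n)\|^4\preceq p\,5^{k}$. Both rest on the decomposition of the tapering matrix into $O(p)$ overlapping blocks of dimension at most $k$, controlled via an $\varepsilon$-net of the unit sphere $S^{k-1}$ whose covering number is at most $5^{k}$ --- this is precisely where the factor $5^{k/2}$ enters --- while standard Gaussian concentration for the quadratic forms making up each block (using $k\vee\log p=o(n)$, so the deviation level $M_1/2$ is a fixed constant) supplies the exponent $\lambda$.

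Finally, the residual contribution $C\,\P(\mathcal{A}^c)$ from the $\mathcal{A}^c$ analysis is exponentially small and hence dominated by $(k+\log p)/n$, so it is absorbed, and collecting the good-event and bad-event bounds gives the claimed inequality for all sufficiently large $n$.
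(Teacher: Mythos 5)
Your proposal is correct and takes essentially the same route as the paper: the same resolvent decomposition of $\psi\{T_k^{(\epsilon_n)}(S_n)\}-\psi(\Sigma_0)$, the same good/bad-event split with Cauchy--Schwarz on the bad event, and the same ingredients --- Theorem 2 of \cite{cai2010optimal} for the forward rate, an exponential bad-event bound carrying the $p5^k$ net-union factor, and a fourth-moment bound (cf.\ Lemmas \ref{lemma:evminfreq} and \ref{lemma:upper1} and the analogous steps in the proof of Theorem \ref{theorem:frequpper}); your only deviation is bookkeeping, since you place the $p5^k$ factor in $E_{\Sigma_0}(||T_k(S_n)||^4)$ and claim a prefactor-free tail, while the paper keeps the fourth moment $O(1)$ and puts $2p5^k$ in the tail probability, and the Cauchy--Schwarz product is the same $p^{1/2}5^{k/2}\exp(-\lambda n)/\epsilon_n^2$ either way. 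One slip to patch: under the stated condition $\lfloor k/2\rfloor>\{4M/\lambda_{\min}(\Sigma_0)\}^{1/\alpha}$ the tapering bias $||T_k(\Sigma_0)-\Sigma_0||$ is only guaranteed to be below $\lambda_{\min}(\Sigma_0)/4$, which can exceed $M_1/2$ when $\lambda_{\min}(\Sigma_0)>2M_1$ (the theorem allows $k$ bounded), so $\P(\mathcal{A}^c)$ for your event $\mathcal{A}=\{||T_k(S_n)-\Sigma_0||\le M_1/2\}$ need not be exponentially small; define $\mathcal{A}$ instead by $||T_k(S_n)-T_k(\Sigma_0)||\le\lambda_{\min}(\Sigma_0)/4$, or condition directly on $\lambda_{\min}\{T_k(S_n)\}\ge\lambda_{\min}(\Sigma_0)/2$ as in Lemma \ref{lemma:evminfreq}, after which every subsequent bound goes through with constants depending only on $M$, $M_0$, $M_1$ and $\alpha$.
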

The proof of this theorem is given in the supplementary material.

Next, we show the convergence rate of the blockwise tapering estimator.
The blockwise tapering estimator is designed to estimate $\phi(\Sigma_0;2\lfloor ak\log k\rfloor,0)$ which approximates $\psi(\Sigma_0)$.
Lemma \ref{lemma:trueerror} gives the approximation error, which is negligible when $k$ is large enough. 
Based on the approximation error, the convergence rate of the blockwise tapering estimator is given in Theorem \ref{theorem:frequpper}.
If we set $\epsilon_n$ such that $p^{1/2}5^{k/2} n\exp(-\lambda n) \preceq\epsilon_n^2\preceq k/n$, the convergence rate of the blockwise tapering estimator is $ k/n +k^{-2\{\alpha \wedge (a\tau-1)\}} $.
\begin{lemma}\label{lemma:trueerror}
	Suppose $\Sigma_0\in\calF_{p,\alpha}(M,M_0,M_1)$. 
	There exist some positive constants $C$ and $\tau$ depending only on $M$, $M_0$ and $M_1$ such that 
	\bea
	||\psi(\Sigma_0) - T_k(\Sigma_{0})_{YX} \Lambda^{(0)}\{T_k(\Sigma_{0,XX});2\lfloor ak\log k\rfloor \}|| \le C (k^{-\alpha} + k^{-a\tau+1}),
	\eea
	for all $a>0$ and all sufficiently large integers $k$ and $p_0$ with $\lfloor ak\log k\rfloor /2\ge k$ and $2\lfloor ak\log k\rfloor< p_0$.
\end{lemma}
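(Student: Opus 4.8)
The plan is to first simplify the approximating operator, then split the error into a variable-selection part and a deterministic tapering part. Write $b=2\lfloor ak\log k\rfloor$ and let $X'=X_{(p_0-b+1):p_0}$ be the block of retained covariates. Since the weights $w_{ij}^{(k)}$ vanish whenever $|i-j|\ge k$, and every response index $i\ge p_0+1$ exceeds every discarded covariate index $j\le p_0-b$ by at least $b+1\ge 4k+1>k$ (using the hypothesis $\lfloor ak\log k\rfloor/2\ge k$), the first $p_0-b$ columns of $T_k(\Sigma_0)_{YX}$ vanish identically. Hence the approximating operator collapses to $[\,O\mid T_k(\Sigma_{0,YX'})B^{-1}\,]$, where $B=M_{p_0-b+1}^{(b)}\{T_k(\Sigma_{0,XX})\}=T_k(\Sigma_{0,X'X'})$ is the bottom-right block used in $\Lambda^{(0)}$. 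I would then introduce the exact block conditional-mean operator $\Psi_0^{(b)}:=\Sigma_{0,YX'}(\Sigma_{0,X'X'})^{-1}$ and, by the triangle inequality, bound the target by $\|\psi(\Sigma_0)-[\,O\mid\Psi_0^{(b)}]\|$ (the block-restriction error) plus $\|\Psi_0^{(b)}-T_k(\Sigma_{0,YX'})B^{-1}\|$ (the tapering error).

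For the block-restriction error, partition $\psi(\Sigma_0)=[\,\Psi^{(1)}\mid\Psi^{(2)}\,]$ with $\Psi^{(1)}$ the coefficients on the discarded covariates $X_{1:(p_0-b)}$. The population omitted-variable identity gives $\Psi_0^{(b)}=\Psi^{(2)}+\Psi^{(1)}D$ with $D=\Sigma_{0,X_{1:(p_0-b)},X'}(\Sigma_{0,X'X'})^{-1}$, so the restriction-error operator factors cleanly as $\Psi^{(1)}[\,I\mid -D\,]$. Because $\Sigma_{0,X'X'}$ is a principal submatrix of $\Sigma_0$ its eigenvalues lie in $[M_1,M_0]$, whence $\|D\|\le M_0/M_1$ and $\|[\,I\mid -D\,]\|$ is bounded by a constant; it thus remains to control $\|\Psi^{(1)}\|$. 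Writing $\Sigma_0^{-1}=(w_{ij})$ with blocks $\Omega_{YX},\Omega_{YY}$ and using the Gaussian identity $\psi(\Sigma_0)=-\Omega_{YY}^{-1}\Omega_{YX}$, I get $\Psi^{(1)}=-\Omega_{YY}^{-1}[\Omega_{YX}]_{\cdot,1:(p_0-b)}$. Here $\Omega_{YY}^{-1}$ is the conditional covariance $\mathrm{Var}(Y\mid X)$, whose spectral norm is at most $\lambda_{\max}(\Sigma_0)\le M_0$, while every entry of $[\Omega_{YX}]_{\cdot,1:(p_0-b)}$ has index separation exceeding $ak\log k$; applying Theorem \ref{theorem:bandableinv} and using the symmetry of $(w_{ij})$ to bound both the matrix $\ell_1$ and $\ell_\infty$ norms bounds its spectral norm by $C(k^{-a\lambda+1}+k^{-\alpha})$. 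This controls the block-restriction error at order $k^{-a\lambda+1}+k^{-\alpha}$.

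For the tapering error I would use the identity $PQ^{-1}-\tilde P\tilde Q^{-1}=(P-\tilde P)Q^{-1}+\tilde P\tilde Q^{-1}(\tilde Q-Q)Q^{-1}$ with $P=\Sigma_{0,YX'}$, $Q=\Sigma_{0,X'X'}$, $\tilde P=T_k(\Sigma_{0,YX'})$ and $\tilde Q=B$. The factor $Q^{-1}$ has norm at most $M_1^{-1}$, and the deterministic tapering biases $\|P-\tilde P\|$ and $\|Q-\tilde Q\|$ are of order $k^{-\alpha}$: since $P-\tilde P$ and $Q-\tilde Q$ are supported on $|i-j|>k/2$, their $\ell_1$ and $\ell_\infty$ norms are bounded by the bandable tail sum $M(k/2)^{-\alpha}$, exactly as in \cite{cai2010optimal}. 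Smallness of $\|Q-\tilde Q\|$ simultaneously forces $\lambda_{\min}(\tilde Q)\ge M_1/2$ for large $k$, so $\|\tilde Q^{-1}\|$ and $\|\tilde P\|$ are bounded by constants, and collecting terms gives a tapering error of order $k^{-\alpha}$.

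Summing the two contributions yields the claimed bound $C(k^{-\alpha}+k^{-a\tau+1})$ with $\tau=\lambda$. I expect the block-restriction step to be the main obstacle: unlike the tapering error, which is a routine deterministic perturbation estimate, it requires passing from the covariance-side operator $\Sigma_{0,YX}(\Sigma_{0,XX})^{-1}$ to the precision matrix so as to exploit the approximate bandedness of $\Sigma_0^{-1}$ supplied by Theorem \ref{theorem:bandableinv}, and the omitted-variable factorization is precisely what renders the otherwise uncontrolled constant $\|[\,I\mid -D\,]\|$ harmless. Some care is also needed to confirm that the discarded covariate block lies entirely outside the taper support (so the approximating operator genuinely discards the remote covariates) and that the principal-submatrix eigenvalue bounds hold uniformly over $\calF_\alpha$.
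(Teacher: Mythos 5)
Your proof is correct and follows essentially the same route as the paper: you split the error into a remote-covariate (block-restriction) part, controlled through the approximate bandedness of the precision matrix supplied by Theorem \ref{theorem:bandableinv} --- which is exactly where the $k^{-a\tau+1}$ term with $\tau=\lambda$ originates --- and a deterministic tapering-bias part of order $k^{-\alpha}$ handled by the standard $PQ^{-1}-\tilde P\tilde Q^{-1}$ perturbation identity with interlacing bounds on principal submatrices. The omitted-variable factorization $\psi(\Sigma_0)-[\,O\mid\Psi_0^{(b)}\,]=\Psi^{(1)}[\,I\mid -D\,]$ is a clean and valid way to organize the first part, and your auxiliary steps (vanishing of the tapered far columns since $b\ge 4k$, the identity $\psi(\Sigma_0)=-\Omega_{YY}^{-1}\Omega_{YX}$ with $\|\Omega_{YY}^{-1}\|\le M_0$, the $\ell_1$--$\ell_\infty$ interpolation bound exploiting the symmetry of $\Sigma_0^{-1}$, and $\lambda_{\min}(\tilde Q)\ge M_1/2$ for sufficiently large $k$) all check out.
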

The proof of this lemma is given in the supplementary material.
\begin{theorem}\label{theorem:frequpper}
	Suppose $\Sigma_0\in\calF_{p,\alpha}(M,M_0,M_1)$. 
	If $ k\vee \log p =o(n)$, $\lfloor k/2 \rfloor > \{4M/\lambda_{\min}(\Sigma_0)\}^{1/\alpha}$ and $\epsilon_n=O(1)$, then
	there exist some positive constants $C$, $\lambda$ and $\tau$ depending only on $M$, $M_0$, $M_1$ and $\alpha$ such that
	\bea
	&&E_{\Sigma_0} ( || \psi(\Sigma_0)  - \phi(S_n;2\lfloor ak\log k\rfloor,\epsilon_n)||^2 ) \\
	&\le& C\Big\{ k^{-2\{\alpha \wedge (a\tau-1)\}} + \frac{k}{n} +\epsilon_n^2 +\frac{p^{1/2} 5^{k/2} \exp(-\lambda n)}{\epsilon_n^2}  \Big\},
	\eea
	for all $a>0$ and all sufficiently large $n$, $k$ and $p_0$ with $\lfloor ak\log k\rfloor /2\ge k$ and $p_0>2\lfloor ak\log k\rfloor$.
\end{theorem}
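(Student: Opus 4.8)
Write $b=2\lfloor ak\log k\rfloor$ and let
\[
\phi_0 \;=\; T_k(\Sigma_0)_{YX}\,\Lambda^{(0)}\{T_k(\Sigma_{0,XX});b\}
\]
be the population quantity that the estimator $\hat\phi:=\phi(S_n;b,\epsilon_n)$ targets. The plan is the usual bias--variance split: by the triangle inequality and $(x+y)^2\le 2x^2+2y^2$,
\[
\|\psi(\Sigma_0)-\hat\phi\|^2 \;\le\; 2\,\|\psi(\Sigma_0)-\phi_0\|^2 + 2\,\|\phi_0-\hat\phi\|^2 .
\]
The first term is deterministic and is exactly the quantity bounded in Lemma \ref{lemma:trueerror}; squaring $C(k^{-\alpha}+k^{-a\tau+1})$ gives $\|\psi(\Sigma_0)-\phi_0\|^2 \preceq k^{-2\alpha}+k^{-2(a\tau-1)}=k^{-2\{\alpha\wedge(a\tau-1)\}}$, which produces the first term of the stated bound. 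Everything that follows is devoted to the stochastic term $E_{\Sigma_0}\|\phi_0-\hat\phi\|^2$.

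For the stochastic term I would set $G=M^{(b)}_{p_0-b+1}\{T_k(\Sigma_{0,XX})\}$ and $\hat G=M^{(b)}_{p_0-b+1}\{T_k(S_n)_{XX}\}$, write $\hat G_\epsilon$ for its positive-definite adjustment, and let $A,\hat A$ denote the last $b$ columns of $T_k(\Sigma_0)_{YX}$ and $T_k(S_n)_{YX}$. Because $\Lambda^{(\cdot)}$ annihilates all but the bottom-right $b\times b$ block and the tapering weights $w^{(k)}_{ij}$ vanish for $|i-j|\ge k$, the products collapse to $\|\phi_0-\hat\phi\|=\|AG^{-1}-\hat A\hat G_\epsilon^{-1}\|$ and only a window of $S_n$ of size $O(b)=O(k\log k)$ is ever used. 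I would then expand
\[
\hat A\hat G_\epsilon^{-1}-AG^{-1}=(\hat A-A)G^{-1}+\hat A\,(\hat G_\epsilon^{-1}-G^{-1}),\qquad \hat G_\epsilon^{-1}-G^{-1}=\hat G_\epsilon^{-1}(G-\hat G_\epsilon)G^{-1},
\]
so the stochastic error is governed by the tapering fluctuations $\|\hat A-A\|$, $\|\hat G-G\|$ together with operator-norm controls. Here $\|G^{-1}\|$ is bounded by a constant: eigenvalue interlacing and Weyl's inequality give $\lambda_{\min}(G)\ge\lambda_{\min}\{T_k(\Sigma_{0,XX})\}\ge\lambda_{\min}(\Sigma_0)-\|T_k(\Sigma_{0,XX})-\Sigma_{0,XX}\|\ge M_1-Ck^{-\alpha}\ge M_1/2$, where the hypothesis $\lfloor k/2\rfloor>\{4M/\lambda_{\min}(\Sigma_0)\}^{1/\alpha}$ guarantees the bias term is small; similarly $\|A\|\le\|T_k(\Sigma_0)\|\le CM_0$.

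The crux is the concentration of the tapering error on this restricted window. Since the window has dimension $O(k\log k)$, the entire stochastic analysis runs exactly as in the proof of Theorem \ref{theorem:taperupper} (equivalently, the tapering concentration of \cite{cai2010optimal}), but with the ambient dimension $p$ replaced by $O(k\log k)$: the block decomposition of $T_k$ into bands of width $\sim k$, together with the $\varepsilon$-net bound (covering number $\le 5^{O(k)}$ per band) and a union bound over only $O(b/k)=O(\log k)$ bands, yields $\|\hat G-G\|\vee\|\hat A-A\|\preceq\sqrt{(k+\log b)/n}$ with probability $1-5^{O(k)}\exp(-cn)$. The decisive point is that $\log b=\log(2\lfloor ak\log k\rfloor)=O(\log k)\le k$, so $\log p$ never enters and the variance is of order $k/n$ rather than $(k+\log p)/n$; this is precisely where the blockwise truncation improves on Theorem \ref{theorem:taperupper}. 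Integrating the tail gives $E_{\Sigma_0}(\|\hat G-G\|^2)\vee E_{\Sigma_0}(\|\hat A-A\|^2)\preceq k/n$.

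Finally I would take expectations through a good-event/bad-event split. On $\mathcal G=\{\|\hat G-G\|\le M_1/4,\ \|\hat A-A\|\le 1\}$ one has $\lambda_{\min}(\hat G)\ge M_1/4$, the adjustment contributes at most $\epsilon_n$, and $\|G^{-1}\|,\|\hat A\|,\|\hat G_\epsilon^{-1}\|$ are all $O(1)$, so the displayed expansion gives $\|\phi_0-\hat\phi\|^2\le C(\|\hat A-A\|^2+\|\hat G-G\|^2+\epsilon_n^2)$ and hence $E_{\Sigma_0}(\|\phi_0-\hat\phi\|^2\mathbf{1}_{\mathcal G})\preceq k/n+\epsilon_n^2$. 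On $\mathcal G^c$ the construction still forces $\lambda_{\min}(\hat G_\epsilon)=\max\{\lambda_{\min}(\hat G),\epsilon_n\}\ge\epsilon_n$, so $\|\hat G_\epsilon^{-1}\|\le 1/\epsilon_n$ unconditionally; bounding $\|\hat A\|$ crudely by $\|T_k(S_n)\|$, applying Cauchy--Schwarz with $P(\mathcal G^c)\le 5^{O(k)}\exp(-cn)$ and a polynomial moment bound for $E_{\Sigma_0}\|S_n\|^4$, and square-rooting the probability produces the residual term $p^{1/2}5^{k/2}\exp(-\lambda n)/\epsilon_n^2$, exactly as in the corresponding step of Theorem \ref{theorem:taperupper}. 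Summing the good-event bound, the bad-event bound, and the bias term gives the claim. I expect the third step to be the main obstacle: one must verify carefully that the effective dimension is genuinely $O(k\log k)$, so the union bound costs only $\log(k\log k)=O(k)$ and the $\log p$ factor disappears, while simultaneously keeping $\hat G_\epsilon^{-1}$ and the positive-definite adjustment controlled on both events.
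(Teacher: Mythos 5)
Your proposal is correct, and its skeleton is essentially the paper's: the same bias term handled by Lemma \ref{lemma:trueerror}, the same resolvent-type expansion of $\hat A\hat G_\epsilon^{-1}-AG^{-1}$ (the paper phrases it as the three-term split \eqref{formula:frequpper1}--\eqref{formula:frequpper3}, which is the same algebra with the hats on the other factor), the same good/bad-event split on $\lambda_{\min}$ with Cauchy--Schwarz against an exponential tail (Lemma \ref{lemma:evminfreq}), and the same localization of the inverse factor to the $b\times b$ block with $b=2\lfloor ak\log k\rfloor$, where the paper applies Lemma \ref{lemma:upperblockinv} in ambient dimension $b$ so that $\log b\preceq k$ --- exactly your ``effective dimension $O(k\log k)$'' point for $\hat G$. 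The one genuine divergence is your treatment of the numerator factor $\hat A-A$, and it cuts in your favor: the paper bounds $\|T_k(S_n)_{YX}-T_k(\Sigma_0)_{YX}\|$ by the full-matrix tapering error and invokes Theorem 2 of \cite{cai2010optimal}, so its displayed bound for \eqref{formula:frequpper2} carries $(k+\log p)/n$, which is consistent with the theorem's advertised $k/n$ only when $\log p\preceq k$. You instead note that $w_{ij}^{(k)}=0$ for $|i-j|\ge k$ forces the tapered $YX$ block onto a $(k-1)\times(k-1)$ corner (and multiplication by $\Lambda^{(\epsilon_n)}$ keeps only the last $b$ columns anyway), so its fluctuation concentrates at rate $\sqrt{k/n}$ with no $\log p$ at all; this refinement is precisely what is needed to deliver the stated $k/n$, and is sharper than what the paper's own display for \eqref{formula:frequpper2} records.

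Two bookkeeping points, neither fatal. First, your bad-event residual: the paper gets $p^{1/2}5^{k/2}e^{-\lambda n}$ by square-rooting the full-dimension eigenvalue tail $2p5^{k}e^{-\lambda n}$ while keeping the fourth moment $O(1)$ via Lemma \ref{lemma:upper1} (a moment bound for the \emph{adjusted tapered} matrix, not for $\|S_n\|$, whose fourth moment grows with $p$ when $p\gg n$). Your route --- window tail of size $5^{O(k)}e^{-cn}$ times a polynomial-in-$p$ moment of $\|S_n\|$ --- still lands inside the stated residual, but only after absorbing the surplus power of $p$ into the exponential using $\log p=o(n)$; say this explicitly, or simply use the tapered moment bound as in Lemma \ref{lemma:upper1}. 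Second, on the good event your lower bound $\lambda_{\min}(G)\ge M_1/2$ must be justified via $\|T_k(\Sigma_{0,XX})-\Sigma_{0,XX}\|\le M(\lfloor k/2\rfloor)^{-\alpha}$ together with the hypothesis on $\lfloor k/2\rfloor$, which you do and which is the same step the paper performs inside Lemmas \ref{lemma:evminfreq} and \ref{lemma:upperblockinv}.
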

The proof of this theorem is given in Appendix \ref{ssec:thm4}.

Next, we give the lower bound of the minimax risk for the conditional mean operator under the bandable covariance assumption to show that the blockwise tapering estimator is a minimax optimal estimator. 
Let $\hat{C}=\hat{C}(X_1,X_2,\ldots,X_n)$ be an estimator on $\bbR^{p-p_0 \times p_0}$. The minimax risk is defined as 
\bea
\inf_{\hat{C}}\sup_{\Sigma_0\in \calF_\alpha} E || \psi(\Sigma_0) -\hat{C} ||^2 .
\eea
Theorem \ref{theorem:condmeanlower} gives a lower bound of the minimax risk as $n^{-2\alpha/(2\alpha+1)}$.
If we set $k$, $a$ and $\epsilon_n$ of the blockwise tapering estimator such that
$k=n^{1/(2\alpha+1)}$, $a>(\alpha+1)/\tau$ and $p^{1/2}5^{k/2} n\exp(-\lambda n) \preceq\epsilon_n^2\preceq k/n$, then the convergence rate is the same as the lower bound asymptotically.
Thus, the minimax convergence rate is $n^{-2\alpha/(2\alpha+1)}$, and the blockwise tapering estimator attains the convergence rate.

\begin{theorem}\label{theorem:condmeanlower}
	There exist some positive constants $C$ and $\gamma$ depending only on $M$, $M_0$, $M_1$ and $\alpha$ such that
	\bea
	\inf_{\hat{C}}\sup_{\Sigma_0\in \calF_\alpha} E || \psi(\Sigma_0) -\hat{C} ||^2 \ge C  n^{-2\alpha/(2\alpha+1)},
	\eea
	for all sufficiently large $n$ and $p_0$ with $p_0>\gamma n^{1/(2\alpha +1)}$
\end{theorem}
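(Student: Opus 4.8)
The plan is to prove the lower bound by Assouad's lemma applied to a finite subfamily of $\calF_\alpha$, using the standard device that a fixed test vector turns the spectral-norm loss into an additively decomposable quantity (this is in the spirit of the operator-norm lower bound of \cite{cai2010optimal}, but tailored to the cross-block). First I would reduce the problem to estimating the off-diagonal block: take every member of the subfamily to have identity diagonal blocks, so that $\Sigma_{XX}=I_{p_0}$ and $\Sigma_{YY}=I_{p-p_0}$, and hence $\psi(\Sigma)=\Sigma_{YX}\Sigma_{XX}^{-1}=\Sigma_{YX}$; the estimand is then exactly the perturbed cross block.

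Concretely, set $k\asymp n^{1/(2\alpha+1)}$ and choose a covariate index block $J\subset\{1,\ldots,p_0\}$ and a response index block $I$ with $|J|=|I|=k$, placed near the corner but separated by a gap so that every separation $|i-j|$ with $i\in I,\, j\in J$ lies in $[k,3k]$; this requires $p_0\gtrsim 2k$, which is the stated hypothesis $p_0>\gamma n^{1/(2\alpha+1)}$ (and needs $p-p_0\gtrsim k$ responses). For $\theta\in\{0,1\}^k$ I set $\Sigma_\theta=I_p+E_\theta$, where $E_\theta$ is supported on the cross positions $I\times J$ and their transpose, with the block of $\Sigma_{YX}$ equal to $(\rho\,\theta_i)_{i\in I,\,j\in J}$ constant along each row, and $\rho\asymp k^{-\alpha-1}$. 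I would then check membership in $\calF_{p,\alpha}(M,M_0,M_1)$: each perturbed row and column has $k$ entries of size $\rho$ at separation $\Theta(k)$, so every tail sum is $O(k\rho)=O(k^{-\alpha})$, giving the bandable bound; moreover $\|E_\theta\|_2\le\|\Sigma_{\theta,YX}\|_F\le k\rho=O(k^{-\alpha})\to0$, so all eigenvalues lie in $[M_1,M_0]$ for large $n$, and $\psi(\Sigma_\theta)$ equals the cross block.

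For the two ingredients of Assouad I would proceed as follows. Taking the unit test vector $u=k^{-1/2}\mathbf 1_J\in\bbR^{p_0}$ gives $(\psi_\theta-\psi_{\theta'})u=\rho\sqrt{k}\,(\theta-\theta')$ indexed by $I$, whence
\[
\|\psi_\theta-\psi_{\theta'}\|_2^2\ \ge\ \|(\psi_\theta-\psi_{\theta'})u\|_2^2\ =\ k\rho^2\,H(\theta,\theta'),
\]
where $H$ is the Hamming distance; this is exactly the additive decomposition Assouad needs, with per-coordinate separation $\beta=k\rho^2\asymp k^{-2\alpha-1}$. For the information bound, flipping one bit changes only a single row block of $\Sigma_\theta$, so $\|\Sigma_\theta-\Sigma_{\theta'}\|_F^2=O(k\rho^2)$ and the Kullback--Leibler divergence between the $n$-fold Gaussian products is $O(nk\rho^2)=O(nk^{-2\alpha-1})$; choosing the implicit constant in $k=(cn)^{1/(2\alpha+1)}$ large makes this a small constant, so adjacent products have total variation bounded away from $1$. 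Assouad's lemma then gives a lower bound of order $k\cdot\beta=k\cdot k^{-2\alpha-1}=k^{-2\alpha}\asymp n^{-2\alpha/(2\alpha+1)}$.

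The main obstacle is precisely the feature I have engineered around: the spectral norm is not additive over coordinatewise perturbations, so a naive single-row or single-column construction either violates the bandable budget (which caps each row/column tail at $k^{-\alpha}$) or is too easily distinguished, and in either case yields only the parametric rate $n^{-1}$. The essential point is to spread the perturbation over a two-dimensional $k\times k$ block, so that the bandable budget still permits per-entry magnitude $\rho\asymp k^{-\alpha-1}$ across $k$ independent row-bits, while the fixed test vector $u$ recovers an additive $\ell_2$-separation of order $k\rho^2$ per bit; balancing the Assouad risk $k\cdot(k\rho^2)$ against the distinguishability constraint $n\,k\rho^2=O(1)$ is what forces $k\asymp n^{1/(2\alpha+1)}$ and produces the claimed rate.
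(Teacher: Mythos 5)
Your proposal is correct and follows essentially the same route as the paper's proof: an Assouad hypercube $\{0,1\}^k$ of bandable covariances with unperturbed (identity-type) $X$-block, per-bit response-row perturbations of magnitude $\asymp k^{-\alpha-1}$ spread over $\asymp k$ covariate columns, a fixed test vector supported on those $k$ columns that converts the spectral-norm loss into a Hamming-additive separation of order $k\rho^2 H(\theta,\theta')$ per pair, and a Frobenius-controlled total-variation bound (the paper via Lemma \ref{lemma:ineqTV}, you via the equivalent KL estimate) with $k\asymp n^{1/(2\alpha+1)}$; the paper's placement of the $2k$ perturbed entries adjacent to the diagonal of each response row, versus your gapped pure cross-block, is a cosmetic difference. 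One small repair: center the family at $\bar M I_p$ with $\bar M=(M_0+M_1)/2$, as the paper does, rather than at $I_p$, since the fixed constants of $\calF_{p,\alpha}(M,M_0,M_1)$ need not satisfy $M_1<1<M_0$; this merely rescales $\psi(\Sigma_\theta)$ by $\bar M^{-1}$ and changes constants.
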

See Appendix \ref{ssec:thm5} for the proof.

\se{Blockwise Tapering Post-Processed Posterior}\label{sec:btppp}

We propose the Bayesian counterparts of the tapering estimator and the blockwise tapering estimator using the post-processed posterior method. See \citet{lee2020post}. The algorithm for the post-processed posteriors consists of the following two steps.
\begin{itemize}
	\item[(a)] (Initial posterior sampling step) First, we obtain the initial conjugate posterior distribution on the unconstrained parameter space. We take the inverse-Wishart distribution $IW_p(B_0, \nu_0)$ as the initial prior distribution of which density function is 
	$$\pi^i(\Sigma) \propto |\Sigma|^{-\nu_0/2}e^{-tr(\Sigma^{-1}B_0)/2}, \quad \Sigma\in\calC_p,$$
	where $B_0\in\calC_p$ and $\nu_0>2p$. Then, the initial posterior distribution $\pi^i(\Sigma | \bbZ_n)$ is $IW_p(B_0+nS_n,\nu_0+n)$, where $n$ is the number of observations, $S_n= n^{-1}\sum_{i=1}^n Z_i Z_i^T$ and $\bbZ_n = (Z_1,\ldots, Z_n)$. We generate $\Sigma^{(1)},\Sigma^{(2)},\ldots,\Sigma^{(N)}$ from the initial posterior distribution.
	\item[(b)]   (Post-processing step)  Second, we post-process the samples from the initial posterior distribution with $\psi \{T_k^{(\epsilon_n)}(\cdot)\}$ or $\phi(\cdot;2\lfloor ak\log k\rfloor ,\epsilon_n)$, which are called {\it the tapering function} and {\it the blockwise tapering function}.   
	
	%	the tapered conditional mean operators $\psi(T_k^{(\epsilon_n)}(\cdot;p_0)$ or $\phi(\cdot;p_0,2\lfloor ak\log k\rfloor ,\epsilon_n)$ defined in \eqref{formula:cmoperator}.  
	%Then the resulting sample follows $\pi^i(T_k^{(\epsilon_n)}(\cdot;p_0,a) \mid \bbZ_n)$, which is denoted by $\pi^{pp}(\cdot\mid \bbZ_n;T_k^{(\epsilon_n)}(\cdot;p_0,a)  )$.
\end{itemize}
We call the post-processed posteriors obtained from the post-processing functions
{\it the tapering post-processed posterior} (tapering PPP) and {\it the blockwise tapering post-processed posterior} (blockwise tapering PPP).

%\se{Convergence Rates}\label{sec:asymptotic}

%For a covariance matrix $\Sigma_0\in\bbR^{p\times p}$ and a positive integer $p_0(<p)$, if there is no confusion, we let $\Sigma_{0,11}$ and $\Sigma_{0,21}$ denote the submatrix $(\Sigma_0)_{1:p_0,1:p_0}$ and $(\Sigma_0)_{ (p_0+1):p , 1:p_0}$, respectively.
%In the frequentist setting, the risk function for $\psi(T_k^{(\epsilon_n)}(S_n;p_0,a);p_0) $ is defined as 
%\bea
%E_{\Sigma_{0}} || \Sigma_{0,21} (\Sigma_{0,11})^{-1} - \psi(T_k^{(\epsilon_n)}(S_n;p_0,a);p_0)  ||^2.
%\eea
%In the definition of the risk, the parameter of interest is the linear operator for conditional means and the loss function is the spectral norm. Since the spectral norm is a representative operator norm, the risk represents the performance of our method as an operator. 

We use the decision-theoretic framework \citep{lee2018optimal,lee2020post} to prove the minimax optimality of the blockwise tapering post-processed posterior.
We define P-loss $\calL(\cdot,\cdot)$ and P-risk $\calR(\cdot,\cdot)$ for the conditional mean operator as 
\bea
\calL\{ \psi(\Sigma_0), \pi^{pp}(\cdot\mid\bbZ_n;f)    \} &:=&  E^{\pi^{i}} (||  \psi(\Sigma_0)- f(\Sigma) ||^2\mid \bbZ_n)\\
\calR\{\psi(\Sigma_0), (\pi^{i}, f)\} &:= & E_{\Sigma_0} \{  E^{\pi^i}  (||\psi(\Sigma_0)- f(\Sigma)||^2 \mid \bbZ_n)\},
\eea
where $ \pi^{pp}(\cdot\mid\bbZ_n;f)$ is the post-processed posterior distribution derived from  initial prior $\pi^i$ and post-processing function $f$, and $(\pi^{i}, f)$ is a pair of initial prior $\pi^i$ and post-processing function $f$.
Theorems \ref{theorem:TPPPupper} and \ref{theorem:PPPupper} give the P-risk convergence rates of the tapering and the blockwise tapering post-processed posteriors, respectively. The convergence rates are the same as their frequentist counterparts. 
\begin{theorem}\label{theorem:TPPPupper}
	Suppose $\Sigma_0\in\calF_{p,\alpha}(M,M_0,M_1)$. 
	Let $k$ be a positive integer with $k<p_0$, and let the prior $\pi^i$ of $\Sigma$ be $IW_p(A_n,\nu_n)$ for $A_n\in\calC_p$ and $\nu_n>2p$. If $\epsilon_n=O(1)$, $\lfloor k/2 \rfloor > \{4M/\lambda_{\min}(\Sigma_0)\}^{1/\alpha}$ and $k \vee ||A_n|| \vee (\nu_n-2p)\vee\log p=o(n) $, then there exist positive constants $C$ and $\lambda$  depending only on $M$, $M_0$ and $M_1$ such that 
	\bea
	&&E_{\Sigma_0}\{ E^{\pi^i} ( || \psi(\Sigma_0) - \psi\{ T_k^{(\epsilon_n)}(\Sigma)\} ||^2\mid\bbZ_n)\}\\
	&\le&  C\Big\{ k^{-2\alpha} + \frac{k+\log p}{n} +\epsilon_n^2 +\frac{p^{1/2} 5^{k/2} \exp(-\lambda n)}{\epsilon_n^2}  \Big\},
	\eea
	for all sufficiently large $n$ and $k$.
\end{theorem}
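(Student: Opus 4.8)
The plan is to transfer the frequentist bound of Theorem \ref{theorem:taperupper} to the post-processed posterior by showing that the inverse-Wishart initial posterior concentrates tightly around the sample covariance $S_n$. Write $\Sigma$ for a draw from the initial posterior $IW_p(A_n+nS_n,\nu_n+n)$. Using $\|u+v\|^2\le 2\|u\|^2+2\|v\|^2$ inside the posterior expectation, I would first split
\[ E^{\pi^i}(\|\psi(\Sigma_0)-\psi\{T_k^{(\epsilon_n)}(\Sigma)\}\|^2\mid\bbZ_n) \le 2\|\psi(\Sigma_0)-\psi\{T_k^{(\epsilon_n)}(S_n)\}\|^2 + 2E^{\pi^i}(\|\psi\{T_k^{(\epsilon_n)}(S_n)\}-\psi\{T_k^{(\epsilon_n)}(\Sigma)\}\|^2\mid\bbZ_n). \]
After applying $E_{\Sigma_0}$, the first term does not involve $\Sigma$ and is exactly the frequentist risk controlled by Theorem \ref{theorem:taperupper}, so the whole problem reduces to bounding the posterior fluctuation term by the same rate.

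For the fluctuation term I would work on a good event $G$ on which both $S_n$ and the draw $\Sigma$ are close to $\Sigma_0$ in the relevant banded sense. Because $\Sigma_0\in\calF_\alpha$ and $\lfloor k/2\rfloor>\{4M/\lambda_{\min}(\Sigma_0)\}^{1/\alpha}$, the tapering bias $\|T_k(\Sigma_0)-\Sigma_0\|$ is small, so on $G$ the matrices $T_k(S_n)$ and $T_k(\Sigma)$ are positive definite with smallest eigenvalue bounded below by a constant; by eigenvalue interlacing the same holds for their $XX$ blocks, the adjustment term $[\epsilon_n-\lambda_{\min}\{T_k(\cdot)\}]\vee 0$ vanishes, and the block inverses have spectral norm $\le C$. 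Expanding $\psi(A)-\psi(B)=(A_{YX}-B_{YX})A_{XX}^{-1}-B_{YX}B_{XX}^{-1}(A_{XX}-B_{XX})A_{XX}^{-1}$ with $A=T_k^{(\epsilon_n)}(\Sigma)$ and $B=T_k^{(\epsilon_n)}(S_n)$ then yields the Lipschitz bound $\|\psi(A)-\psi(B)\|\le C\|T_k(\Sigma-S_n)\|$, using linearity of $T_k$.

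The core estimate thus becomes $E_{\Sigma_0}E^{\pi^i}(\|T_k(\Sigma-S_n)\|^2\mid\bbZ_n)$ on $G$. Since $T_k(\Sigma-S_n)$ is $k$-banded, I would invoke the banded-matrix decomposition technique \citep{cai2010optimal} to split it into $O(p/k)$ overlapping principal blocks of size $\asymp 2k$, giving $\|T_k(\Sigma-S_n)\|\le C\max_l\|M_l^{(2k)}(\Sigma)-M_l^{(2k)}(S_n)\|$. The decisive structural fact is that each principal block $M_l^{(2k)}(\Sigma)$ has marginal posterior equal to a $2k$-dimensional inverse-Wishart, whose mean equals $M_l^{(2k)}(S_n)$ up to an $O((\|A_n\|+\nu_n-2p)/n)$ perturbation that is negligible under $\|A_n\|\vee(\nu_n-2p)=o(n)$. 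Low-dimensional inverse-Wishart concentration then gives a per-block bound of order $k/n$ (posterior spread) plus the negligible squared prior bias, and a maximal inequality over the $O(p/k)$ blocks, driven by the exponential tails of the inverse-Wishart, turns the maximum into the $(k+\log p)/n$ term. On the complement $G^c$ I would bound the integrand crudely by $C\epsilon_n^{-2}$ (the adjustment guarantees $\|(\cdot)_{XX}^{-1}\|\le\epsilon_n^{-1}$) and multiply by the probability of $G^c$, which decays like $p^{1/2}5^{k/2}\exp(-\lambda n)$ after a union bound over the blocks together with an $\epsilon$-net on each $2k$-dimensional block, reproducing the final term of the stated bound.

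The main obstacle is the block-level inverse-Wishart analysis. One must confirm that the marginal law of a principal submatrix of the $p$-dimensional initial posterior is inverse-Wishart with the correct scale and degrees of freedom, derive a spectral-norm concentration of this $2k$-dimensional law around $M_l^{(2k)}(S_n)$ that is \emph{uniform} over all $O(p/k)$ block positions, and verify that the prior-induced bias (through $A_n$ and $\nu_n-2p$) stays within the $(k+\log p)/n$ budget—this is exactly where the growth hypotheses are consumed. Extracting the sharp $p^{1/2}5^{k/2}\exp(-\lambda n)$ prefactor for $\P(G^c)$, rather than a looser tail bound, is the other delicate point, and it parallels the corresponding step in the proof of Theorem \ref{theorem:taperupper}.
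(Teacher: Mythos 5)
Your proposal is correct and follows essentially the same route as the paper's proof: split the P-risk into the frequentist tapering risk of Theorem \ref{theorem:taperupper} plus a posterior-fluctuation term, control the fluctuation through the fact that principal $2k\times 2k$ blocks of the inverse-Wishart posterior $IW_p(A_n+nS_n,\nu_n+n)$ are again inverse-Wishart and concentrate around the corresponding blocks of $S_n$ up to a prior bias of order $(\|A_n\|+\nu_n-2p)/n$, combine the blocks via the Cai--Zhang--Zhou decomposition with a union/net bound to get $(k+\log p)/n$, and treat the small-eigenvalue event exactly as in the frequentist case. Two small patches are needed for full rigor: on the good event the positive-definite adjustment need not vanish when $\epsilon_n$ is of constant order (it is only bounded by $\epsilon_n$, which is precisely what produces the $\epsilon_n^2$ term), and on $G^c$ the integrand is not bounded by $C\epsilon_n^{-2}$ since $\|T_k(\Sigma)_{YX}\|$ is unbounded under the posterior, so you need a posterior fourth-moment analogue of Lemma \ref{lemma:upper1} together with Cauchy--Schwarz and a posterior analogue of Lemma \ref{lemma:evminfreq} (mirroring the corresponding steps in the proof of Theorem \ref{theorem:frequpper}) to produce the $p^{1/2}5^{k/2}\exp(-\lambda n)/\epsilon_n^2$ term.
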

The proof of this theorem is given in the supplementary material.
\begin{theorem}\label{theorem:PPPupper}
	Suppose $\Sigma_0\in\calF_{p,\alpha}(M,M_0,M_1)$. 
	Let the prior $\pi^i$ of $\Sigma$ be $IW_p(A_n,\nu_n)$ for $A_n\in\calC_p$ and $\nu_n>2p$. If $\epsilon_n=O(1)$, $\lfloor k/2 \rfloor > \{4M/\lambda_{\min}(\Sigma_0)\}^{1/\alpha}$ and $k \vee ||A_n|| \vee (\nu_n-2p)\vee\log p=o(n) $, then there exist positive constants $C$, $\tau$ and $\lambda$  depending only on $M$, $M_0$ and $M_1$ such that  %$k> \{2M/\lambda_{\min}(\Sigma_0)\}^{1/\alpha}$,
	\bea
	&&E_{\Sigma_0}\{ E^{\pi^i} ( || \psi(\Sigma_0) - \phi(\Sigma;2\lfloor ak\log k\rfloor ,\epsilon_n) ||^2\mid\bbZ_n)\} \\
	&\le&  C\Big\{ k^{-2(\alpha \wedge (a\tau-1))} + \frac{k}{n} +\epsilon_n^2 +\frac{p^{1/2} 5^{k/2} \exp(-\lambda n)}{\epsilon_n^2}  \Big\},
	\eea
	then for all $a>0$ and all sufficiently large $n$, $k$ and $p_0$ with $\lfloor ak\log k\rfloor/2> k$ and $p_0>2\lfloor ak\log k\rfloor $.
\end{theorem}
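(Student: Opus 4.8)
The plan is to mirror the frequentist argument of Theorem~\ref{theorem:frequpper}, with a posterior draw $\Sigma\sim IW_p(A_n+nS_n,\nu_n+n)$ playing the role that the sample covariance $S_n$ plays there, while reusing the inverse-Wishart posterior-moment machinery already needed for Theorem~\ref{theorem:TPPPupper}. First I would apply the triangle inequality to separate a deterministic approximation error from a stochastic estimation error. Writing $\phi_0:=T_k(\Sigma_0)_{YX}\,\Lambda^{(0)}\{T_k(\Sigma_{0,XX});2\lfloor ak\log k\rfloor\}$ for the population target,
\[
\|\psi(\Sigma_0)-\phi(\Sigma;2\lfloor ak\log k\rfloor,\epsilon_n)\|\le\|\psi(\Sigma_0)-\phi_0\|+\|\phi_0-\phi(\Sigma;2\lfloor ak\log k\rfloor,\epsilon_n)\|.
\]
Lemma~\ref{lemma:trueerror} bounds the first term by $C(k^{-\alpha}+k^{-a\tau+1})$, which after squaring supplies the $k^{-2(\alpha\wedge(a\tau-1))}$ contribution and is entirely free of the posterior. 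Everything then reduces to bounding $E_{\Sigma_0}\{E^{\pi^i}(\|\phi_0-\phi(\Sigma;2\lfloor ak\log k\rfloor,\epsilon_n)\|^2\mid\bbZ_n)\}$.

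The central observation is that the conditions $\|A_n\|\vee(\nu_n-2p)=o(n)$ make the initial posterior data-dominated, so a draw $\Sigma$ concentrates around $\Sigma_0$ entrywise at the parametric rate, exactly as $S_n$ does. Concretely I would establish $E_{\Sigma_0}\{E^{\pi^i}((\Sigma_{ij}-(\Sigma_0)_{ij})^2\mid\bbZ_n)\}\preceq 1/n$ by splitting into the entrywise posterior variance of $\Sigma_{ij}$ and the squared deviation of the posterior mean $(A_n+nS_n)_{ij}/(\nu_n+n-2p-2)$ from $(\Sigma_0)_{ij}$. For the inverse-Wishart the entrywise posterior variance is of order $1/n$ under the prescribed scaling of $\nu_n$, while the posterior mean differs from $(S_n)_{ij}$ only by an $O(\|A_n\|/n)$ term and an $O((\nu_n-2p)/n)$ normalization shift, whose squared sizes are $o(1/n)$, and $E_{\Sigma_0}((S_n)_{ij}-(\Sigma_0)_{ij})^2=O(1/n)$. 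Since $T_k$ acts only on a banded set of entries and $\Lambda^{(\epsilon_n)}$ only on the bottom-right $2\lfloor ak\log k\rfloor$-block, the same banded-sum bookkeeping as in Theorem~\ref{theorem:frequpper} aggregates these per-entry bounds into the $k/n$ term.

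For the inverted block I would use the resolvent identity $A^{-1}-B^{-1}=A^{-1}(B-A)B^{-1}$ with $A$ the adjusted estimated block $M_{p_0-b+1}^{(b)}\{T_k(\Sigma)\}+([\epsilon_n-\lambda_{\min}\{M_{p_0-b+1}^{(b)}(T_k(\Sigma))\}]\vee 0)I_b$ and $B$ its population counterpart, where $b=2\lfloor ak\log k\rfloor$. The inverse error is then controlled once the perturbation $\|A-B\|$ is bounded by the entrywise moments of the previous paragraph and the smallest eigenvalues of $A$ and $B$ are bounded below. This lower control is exactly where $\epsilon_n$ and the condition $\lfloor k/2\rfloor>\{4M/\lambda_{\min}(\Sigma_0)\}^{1/\alpha}$ enter: on the typical event the adjusted block keeps $\lambda_{\min}$ bounded away from $0$, so the adjustment contributes only the $O(\epsilon_n^2)$ bias term, whereas the atypical event on which a posterior draw has an eigenvalue far from the truth is controlled by a large-deviation bound for the inverse-Wishart of probability $O(\exp(-\lambda n))$; on that event the adjustment caps the inverse by $\epsilon_n^{-1}$, so its squared contribution is $O(\epsilon_n^{-2})$ times that probability times the crude worst-case size $p^{1/2}5^{k/2}$ of the remaining factors, producing the $p^{1/2}5^{k/2}\exp(-\lambda n)/\epsilon_n^2$ term.

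The hard part will be the rare-event control of the smallest eigenvalue of the $b$-block of the \emph{tapered} posterior draw, because tapering destroys the clean Wishart block structure that would otherwise make the eigenvalue distribution explicit. I expect to need a concentration bound for $\lambda_{\min}$ of the adjusted block under the inverse-Wishart posterior, obtained by passing to the Wishart draw $\Sigma^{-1}$ and then covering the unit sphere in $\bbR^b$ by a net, whose cardinality after accounting for the tapering weights is the source of the $5^{k/2}$ factor, with the dimensional $p^{1/2}$ arising when the residual data event is integrated out. The most delicate bookkeeping will be matching these deliberately crude bounds so that, under the prescribed scaling $p^{1/2}5^{k/2}n\exp(-\lambda n)\preceq\epsilon_n^2\preceq k/n$, every term collapses into the advertised rate $k/n+k^{-2(\alpha\wedge(a\tau-1))}$.
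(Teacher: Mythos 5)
Your high-level plan matches the paper's: split off the deterministic approximation error via Lemma \ref{lemma:trueerror} (yielding the $k^{-2\{\alpha\wedge(a\tau-1)\}}$ term), then rerun the decomposition from the proof of Theorem \ref{theorem:frequpper} with a posterior draw $\Sigma\sim IW_p(A_n+nS_n,\nu_n+n)$ in place of $S_n$, using posterior analogues of Lemmas \ref{lemma:evminfreq}--\ref{lemma:upperblockinv} in which the conditions $||A_n||\vee(\nu_n-2p)=o(n)$ make the posterior track $S_n$; this is exactly the structure of the supplementary proof, shared with Theorem \ref{theorem:TPPPupper}. Your observation that only the bottom-right $b\times b$ block with $b=2\lfloor ak\log k\rfloor$, and (through the tapering weights) only the $O(k)$ rows of the $YX$ block nearest the boundary, enter the estimator is also the right explanation for why $\log p$ drops out of the estimation term.

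The genuine gap is your claim that entrywise posterior moment bounds $E_{\Sigma_0}\{E^{\pi^i}((\Sigma_{ij}-\sigma_{0,ij})^2\mid\bbZ_n)\}\preceq 1/n$ can be aggregated by ``banded-sum bookkeeping'' into the spectral-norm rate $k/n$. Summing per-entry second moments over the support controls only the Frobenius norm, and the tapered $b\times b$ block has of order $bk\asymp k^2\log k$ nonzero entries, so this route gives $k^2\log k/n$, not $k/n$. The $k/n$ spectral rate requires genuine operator-norm concentration: the device of \cite{cai2010optimal} writing $T_k(\cdot)$ as a sum of finitely overlapping sub-blocks of size $O(k)$, combined with a union bound over a net of the unit sphere (the origin of the $5^{k}$ factor) and chi-square large deviations --- and all of this must be carried out under the posterior, e.g.\ via the Wishart representation of inverse-Wishart draws, which is what the posterior analogues of Lemmas \ref{lemma:upper1} and \ref{lemma:evminfreq} accomplish. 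You invoke the net argument only for the rare-event control of $\lambda_{\min}$; it is equally indispensable for the main term. Relatedly, your bookkeeping of the rare-event term as (probability)$\times$(worst-case size $p^{1/2}5^{k/2}$) is not how that factor arises: the paper applies Cauchy--Schwarz, so the contribution is $\epsilon_n^{-2}$ times a \emph{fourth}-moment bound (the analogue of Lemma \ref{lemma:upper1}) times the square root of the probability bound $2p5^{k}\exp(-\lambda n)$ from the analogue of Lemma \ref{lemma:evminfreq}; without that fourth-moment posterior bound, which your entrywise second-moment program does not supply, the decomposition does not close.
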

The proof of this theorem is also given in the supplementary material.

Note that the P-risk minimax lower bound is $n^{-2\alpha/(2\alpha+1)}$ since the P-risk convergence rate is slower than or equal to the frequentist minimax rate \citep{lee2018optimal}. Thus, if we set $k$, $a$ and $\epsilon_n$ of the blockwise tapering post-processed posterior such that $k=n^{1/(2\alpha+1)}$,  $p^{1/2}5^{k/2} n\exp(-\lambda n) \preceq\epsilon_n^2\preceq k/n$ and $a>(\alpha+1)/\tau$, then the P-risk convergence rate is the same as the lower bound asymptotically. Thus, the P-risk minimax convergence rate is $n^{-2\alpha/(2\alpha+1)}$, and the blockwise tapering post-processed posterior attains the convergence rate.

\se{Numerical Studies}\label{sec:numerical}
\sse{Simulation}

We compare the blockwise tapering estimator with the tapering estimator using simulation data.
We define the true covariance matrix $\Sigma_0\in\bbR^{p\times p}$ as below. 
Let $\Sigma_0^*=(\sigma_{0,ij}^*)_{1\le i,j\le p}$, where 
\begin{equation*}
	\sigma_{0,ij}^* = \begin{cases}
		1,               & 1\le i=j\le p\\
		\rho |i-j|^{-(\alpha+1)}, &  1\le i\neq j\le p
	\end{cases},
\end{equation*}
and let $\Sigma_0=\Sigma_0^* + \{0.5-\lambda_{\min}(\Sigma_0^*)\}I_p$, which guarantees the minimum eigenvalue of $\Sigma_0$ is bounded away from zero. We set $\rho=0.6$ and $\alpha=0.1$ for $\Sigma_0$ and generate data $Z_1,\ldots,Z_n$ from $N_p(0,\Sigma_0)$ independently, where $p\in\{500,1000\}$ and $n=p/2$. 
Let $ p_0=0.8p$ and fix the positive-definite adjustment parameter $\epsilon_n$ as $0.5$.
We define the error reduction value by choosing the blockwise tapering estimator over the tapering estimator as 
$$d_f (S_n; k,a) = ||\psi\{T_k^{(\epsilon_n)}(S_n)\}-\psi(\Sigma_0)||- || \phi(S_n;2\lfloor ak\log k\rfloor,\epsilon_n)- \psi(\Sigma_0)||.$$      
We repeat generating the simulation data $T$ times, and let $\bbZ_n^{(i)}$ and $S_n^{(i)}$ denote the data and the sample covariance matrix, respectively, in the $i$th repetition for $i\in \{1,2,\ldots, T\}$. 
We summarize the error reduction values from the repetitions as t-value 
\bea
t_f(k,a;T)  = \frac{\sum_{i=1}^T d_f(S_n^{(i)};k,a)/T}{ [\sum_{i=1}^T \{d_f(S_n^{(i)};k,a)-\sum_{i=1}^T d_f(S_n^{(i)};k,a)/T\}^2/T]^{1/2}},
\eea
which is the performance measure for the comparison between the tapering and blockwise tapering estimators.
We also compare the blockwise tapering PPP with the tapering PPP for the same simulation data. 
We define the error reduction value by choosing the blockwise tapering PPP as 
\bea
d_b (\bbZ_n ; k,a) &=& || \hat{C}^{(TPPP)}-\psi(\Sigma_0)||- ||\hat{C}^{(bTPPP)}- \psi(\Sigma_0)||,
\eea
where $ \hat{C}^{(TPPP)}$ and $\hat{C}^{(bTPPP)}$ are the posterior means of the tapering PPP and the blockwise tapering PPP, respectively.
We define the t-value for $T$ repetitions as 
\bea
t_b(k,a;T)  = \frac{\sum_{i=1}^T d_b (\bbZ_n^{(i)} ; k,a)/T}{ [\sum_{i=1}^T \{d_b (\bbZ_n^{(i)} ; k,a)-\sum_{i=1}^T d_b (\bbZ_n^{(i)} ; k,a)/T\}^2/T]^{1/2}}.
\eea

We evaluate $t_f(k,a;100) $ and $t_b(k,a;100)$ for $k\in\{2,3,\ldots,10\}$ and $a\in\{5,10,20\}$. For the post-processed posteriors we generate $1000$ posterior samples in each setting. We represent the result of the evaluations in Figure \ref{figure:block_comparison}.
\begin{figure}[!htbp]
	\centering	\includegraphics[scale=0.9]{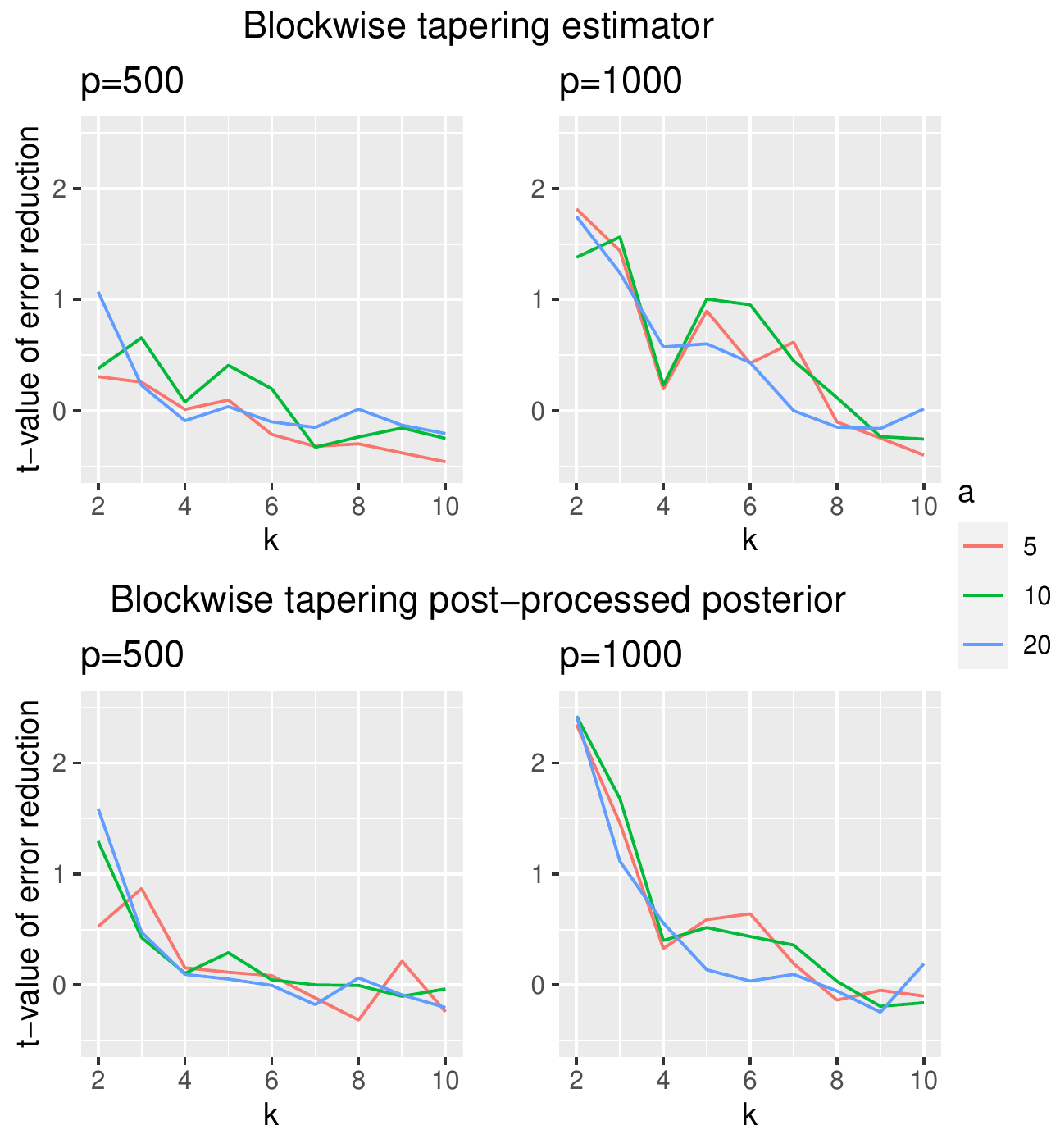}
	\caption{
		The evaluated t-values $t_f(k,a;100) $, the summarized error reductions by choosing the blockwise tapering estimator over the tapering estimator, are represented in the upper plots. 
		The dimension of the covariance $p$ is set to $500$ and $1000$. 
		As the tuning parameters of the methods, $k\in \{2,3,\ldots,10\}$ and $a\in \{5,10,20\}$ are used. 
		The evaluated t-values $t_b(k,a;100) $, the summarized error reductions by choosing the blockwise tapering post-processed posterior over the tapering post-processed posterior, are represented in the lower plots for the same parameters. 
	}
	\label{figure:block_comparison}
\end{figure}
When $p$ is large and $k$ is small, the effects of error reductions by the blockwise tapering estimator and the blockwise tapering post-processed posterior increase.
Note that the convergence rates of the tapering estimator and the tapering PPP contain the additional $\log p/n$ term. The effect of the additional term is increased when another term in the convergence rate, $k/n$, is relatively small. Thus, the error reduction is effective when $p$ is large compared to $k$. The figure also shows that the tapering estimator is slightly better otherwise. %because the blockwise tapering estimator aims to estimate the approximation of the true parameter. It implies that
If $\log p$ is not relatively large, one does not need to abandon the covariates $X_{1:p_0-2\lfloor ak\log k \rfloor}$ by using the blockwise tapering estimator or the blockwise tapering PPP.

Next, we compare the tapering estimator, blockwise tapering estimator, and their Bayesian versions with two other methods: covariance estimation method and multivariate regression method. 
A covariance estimator can be used for the estimation of the conditional mean operator by applying the transformation \eqref{formula:cmoperator}.
We use the banding estimator \citep{bickel2008regularized}, dual maximum likelihood estimator \citep{kauermann1996dualization}, and the banding post-processed posterior \citep{lee2020post} as covariance estimators for comparison.
The multivariate regression method is also used for comparison, since the multivariate linear regression coefficient is the conditional mean operator. We adopt the reduced-rank regression \citep{chen2013reduced}, the sparse reduced-rank regression \citep{chen2012sparse} and the method of sparse orthogonal factor regression (SOFAR) \citep{uematsu2019sofar}.

We need to select tuning parameters for the conditional mean operator estimators.
Based on the tuning parameter selection process, we divide the estimation methods into three categories: frequentist covariance-based method, post-processed posterior method, and multivariate regression method.

The tapering and blockwise tapering estimators belong to the frequentist covariance-based method, and the process of the tuning parameter selection is as follows. 
When a covariance estimator is given, the conditional mean operator and the conditional variance are derived, which yield the conditional distribution under the normality assumption.
The log-likelihood function of the conditional distribution is used for the leave-one-out cross-validation.
Let $\hat{\Sigma}(\bbZ_{n,-i},\tau)$ be a frequentist covariance estimator based on $\bbZ_{n,-i}=(Z_1,\ldots,Z_{i-1},Z_{i+1},\ldots,Z_n)$ given a tuning parameter vector $\tau$. The derived conditional mean operator is $\psi\{\hat{\Sigma}(\bbZ_{n,-i},\tau)\}$, and the conditional variance is 
\bea
\nu\{\hat{\Sigma}(\bbZ_{n,-i},\tau)\}: =\hat{\Sigma}(\bbZ_{n,-i},\tau)_{YY}- \hat{\Sigma}(\bbZ_{n,-i},\tau)_{YX}\{\hat{\Sigma}(\bbZ_{n,-i},\tau)_{XX}\}^{-1}\hat{\Sigma}(\bbZ_{n,-i},\tau)_{XY}   .
\eea
We select $\tau$ as the minimizer of 
\bea
\hat{R}^{(f)}(\tau) = \sum_{i=1}^n\log p[ Y_i \mid \psi\{\hat{\Sigma}(\bbZ_{n,-i},\tau)\} X_{i}, \nu\{\hat{\Sigma}(\bbZ_{n,-i},\tau)\}],
\eea
where $(X_i^T,Y_i^T)^T=Z_i$ and $p(x\mid \mu,\Sigma)$ is the density function of the multivariate normal distribution with mean $\mu$ and covariance $\Sigma$.
Since the conditional variance can not be derived from the blockwise tapering estimator, we use the conditional variance from the tapering estimator in this case.

For the tuning parameter selection of the post-processed posterior methods, we use the Bayesian leave-one-out cross-validation method \citep{gelman2014understanding} to the log-likelihood function of the conditional distribution. Let $\Sigma_1^{(i)},\Sigma_2^{(i)},\ldots,\Sigma_S^{(i)}$ be leave-one-out initial posterior samples which are generated from the initial posterior by $\bbZ_{n,-i}$ for $i\in\{1,2,\ldots,n\}$. We select the tuning parameter vector $\tau$ as the minimizer of 
\bea
\sum_{i=1}^n\log \frac{1}{S}\sum_{s=1}^S p\{Y_i \mid \psi^*(\Sigma_s^{(i)};\tau) X_{i}, \nu^*(\Sigma_s^{(i)};\tau) \},
\eea
where $\psi^*$ and $\nu^*$ are post-processing functions for the conditional mean operator and conditional variance given the tuning parameter $\tau$, respectively.
For the post-processing function of the conditional variance $\nu^*$, the banding PPP uses $\nu\{B_k^{(\epsilon_n)}(\Sigma_s^{(i)})\}$, where $B_k^{(\epsilon_n)}$ is the positive-definite adjusted banding operator defined as
\bea
B_k^{(\epsilon_n)}(\Sigma) = B_k(\Sigma) + ([\epsilon_n -\lambda_{\min}\{B_k(\Sigma)\}]\vee 0)I_p,
\eea
and the tapering and blockwise tapering PPPs use $\nu\{T_k^{(\epsilon_n)}(\Sigma_s^{(i)})\}$.

For the multivariate regression method, we use $10$-fold cross-validation method as \cite{chen2012sparse}, \cite{chen2013reduced} and \cite{uematsu2019sofar} suggested. Note that all the methods contain the rank parameter in the tuning parameters. While we select the rank from $\{0,1,\ldots,10\}$ for the reduced-rank regression, $\{1,2,\ldots,10\}$ is considered for the others. Because if the rank is zero, all the three methods coincide, we only consider the \cite{chen2013reduced}'s method for the zero rank case.

We set $p=200$, $\rho=0.6$ and $\alpha=0.1,0.3$ for $\Sigma_0$ and generate $Z_1,Z_2,\ldots,Z_n$ from $N_p(0,\Sigma_0)$ independently for $n\in\{100,200\}$. 
We repeat generating the simulation data $100$ times for each simulation setting. The performance of each method is measured as 
\bea
\frac{1}{100}\sum_{s=1}^{100} ||\psi(\Sigma_0)- \hat{C}_s||,
\eea
where $\hat{C}_s$ is the point estimator for the conditional mean operator in the $s$th repetition. For the post-processed posterior methods, we use the posterior mean as the point estimator. Table \ref{table:pointerror} gives the simulation error. 
The tapering estimator and the blockwise tapering estimator, and their Bayesian counterparts are the best in all settings. 
The multivariate regression methods, i.e. the reduced-rank regression, sparse reduced-rank regression and sparse orthogonal factor regression, are the worst in all settings. 
Unlike the other covariance-based methods, the bandable or banded covariance structure is not considered in the multivariate regression methods. It appears that the multivariate regression framework does not perform well under the high-dimensional bandable covariance assumption. 
%For the reduced-rank regression, the rank as the tuning parameter is selected as $0$ in the cross-validation step, and the performance is worse than other covariance-based methods. The sparse reduced-rank regression and sparse orthogonal factor regression are the worst in all settings. 

\begin{table}[bt]
	\caption{Spectral norm errors of estimators for the conditional mean operator.}\label{table:pointerror}
		\begin{tabular}{lcccc}
			%\\
			\\
			&\multicolumn{2}{c}{ $n=100$}&\multicolumn{2}{c}{ $n=200$} \\
			& $\alpha=0.1$ & $\alpha=0.3$ & $\alpha=0.1$ & $\alpha=0.3$ \\[5pt]
			Tapering estimator & 0.255  & 0.240 &  0.206  &  0.188  \\
			Blockwise tapering estimator & 0.255  & 0.240 &  0.206  &  0.188  \\
			Banding estimator  & 0.319  & 0.293 &  0.290 &  0.247  \\
			Dual maximum likelihood estimator & 0.365 &   0.357 &  0.319  & 0.273  \\
			Tapering post-processed posterior  & 0.257  &  0.246 &  0.208  & 0.193  \\
			Blockwise tapering post-processed posterior  & 0.257  &  0.246 &  0.208  & 0.193  \\
			Banding post-processed posterior  & 0.323 &   0.323 &  0.303 &  0.256 \\
			Reduced-rank regression  & 0.509 &  0.488 &  0.509  & 0.488\\
			Sparse reduced-rank regression  & 3.324 &  3.309  & 2.818 &  2.902\\
			Sparse orthogonal factor regression  & 1.823 &  1.819  & 1.679 &  1.703
		\end{tabular}
\end{table}

\sse{Application to forecasting traffic speed} 

We apply the proposed methods to multivariate regression analysis for small area spatio-temporal data, and use this application to forecast traffic speed in Yeoui-daero, a road in Seoul.

Suppose spatio-temporal data are observed in $S$ spatial regions and $T$ times, where $S$ and $T$ are positive integers.
Let $X_{s,t}$ be a random variable at $s$th spatial index and $t$th time index, $s=1,\ldots,S$ and $t=1,\ldots,T$. 
We assume 
\bean
&&E[\{X_{s_1,t_1}-E(X_{s_1,t_1})\}\{ X_{s_2,t_2}-E( X_{s_2,t_2})\}] \nonumber\\
&\le& r(|t_1-t_2|), ~ s_1,s_2 \in \{1,\ldots,S\},t_1,t_2 \in \{1,\ldots,T\},\label{formula:STassumption}
\eean
where $r$ is a real-valued function from the non-negative integer space, and is assumed to be a decreasing function.
Rearranging $(X_{s,  t})_{s=1,\ldots,S,t=1,\ldots,T} $, we define $Z\in \bbR^{TS}$ as 
\bean\label{formula:rearrange}
Z = ( X_{1,1},X_{2,1},\ldots,X_{S,1}, X_{1,2},X_{2,2},\ldots,X_{S,T} ) ,
\eean
and let $E(ZZ^T)=\Sigma_0=(\sigma_{0,ij})$.
We show that $\Sigma_0$ is a bandable covariance, if the decreasing rate of $r(x)$ is $x^{-\alpha-1}$.
Note that if $mS \le |i-j| < (m+1)S$ for $m\in \{0,1,\ldots,T-1\}$ and $i,j\in\{1,2,\ldots,TS\}$, then the time index difference between $Z_i$ and $Z_j$ is at least $m$. This observation and assumption \eqref{formula:STassumption} give 
$|\sigma_{0,ij}|\le r(\lfloor|i-j|/S \rfloor )$
and
\bea
\sup_j \sum_i \{ |\sigma_{0,ij}| : |i-j| \ge  k \} &\le&
\sup_j \sum_i \{ |\sigma_{0,ij}| : |i-j| \ge \lfloor k/S \rfloor S \} \\
&\le& \sup_j \sum_{m= \lfloor k/S \rfloor S}^{T-1} \sum_{i\in I_{m}^{(j)}} |\sigma_{0,ij}|\\
&\le& S\sum_{m= \lfloor k/S \rfloor S}^{T-1} r(m),
\eea
where $I_m^{(j)} = \{ i\in \{1,2,\ldots,p\} :  \lfloor |i-j|/S \rfloor = m \}$.
If the decreasing rate of $r(x)$ is $x^{-\alpha-1}$, then 
\bea
\sup_j \sum_i \{ |\sigma_{0,ij}| : |i-j| \ge  k \} \le C k^{-\alpha},
\eea
for some positive constant $C$.
Thus, $\Sigma_0$ is a bandable covariance, and the proposed methods for the conditional mean operator under the bandable covariance assumption can be used to predict $X_{1:S,t_0+1:T}$ given $X_{1:S,1:t_0}$.

Based on the rearrangement \eqref{formula:rearrange} and the proposed methods for the conditional mean operator under the bandable covariance assumption, we forecast traffic speed in Yeoui-daero using data from TOPIS \citep{topis}.
In the traffic speed data in Yeoui-daero, a daily data set consists of observations in 8 spatial indexes and 24-time indexes. Let a daily traffic speed be $(X_{s,t})_{1\le s\le 8,1\le t\le 24}$, where
time index $t$ indicates the time interval from $(t-1)$ o’clock to $t$ o’clock, and the allocation of the spatial index $s$ is given in Figure \ref{fig:yeoui-daero}.
We rearrange $(X_{s,t})_{1\le s\le 8,1\le t\le 24}$ as \eqref{formula:rearrange}, and apply the proposed estimators to forecast $X_{1:8,18:24}$ given $X_{1:8,1:17}$.
\begin{figure}[!htbp]
	\centering
	\includegraphics[height=12cm,width=12cm]{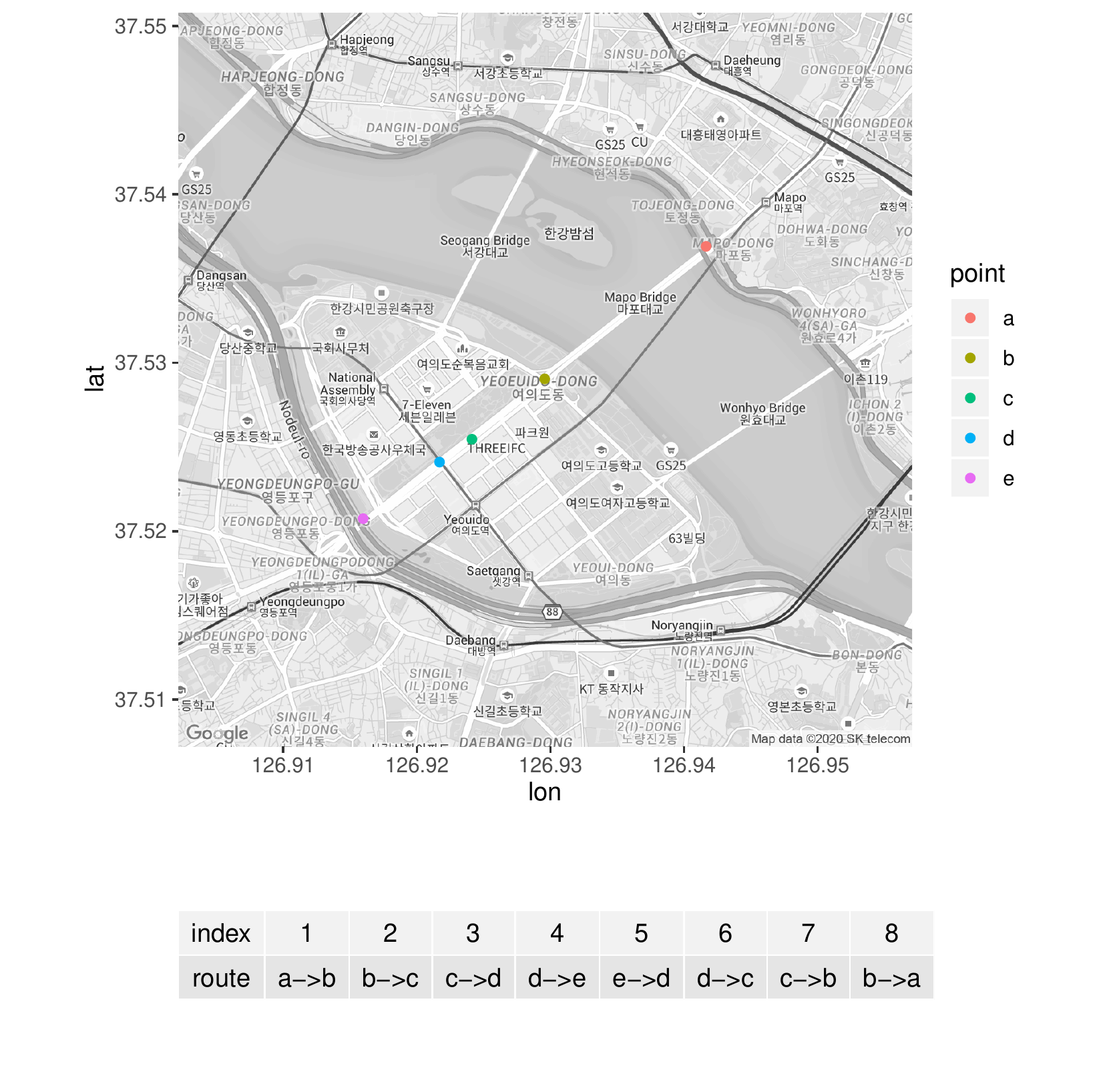}
	\caption{ The eight routes in Yeoui-daero and their index allocation. }
	\label{fig:yeoui-daero}
\end{figure}

In the data from TOPIS, we use data from January to October in $2020$, excluding weekend data sets and missing data sets. We have $172$ days observations which are donoted by $Z_1,Z_2,\ldots,Z_{172} \in \bbR^{192}$. To apply the proposed methods, we use mean-centered observations $\tilde{Z}_1,\tilde{Z}_2,\ldots,\tilde{Z}_{172} \in \bbR^{192}$.  
For the performance measure, 
let the training data be $\bbZ_{train} = (\tilde{Z}_1,\tilde{Z}_2,\ldots,\tilde{Z}_{86} )$, and the test data be $\bbZ_{test} = (\tilde{Z}_{87},\tilde{Z}_{88},\ldots,\tilde{Z}_{172} )$. 
The forecast errors are summarized as 
\bea
\frac{1}{86}\sum_{i=87}^{172}||\hat{C}(\bbZ_{train})(\tilde{Z}_i)_{1:p_0} - (\tilde{Z}_i)_{(p_0+1):p}||_2,
\eea
where $p_0=17\times 8$ and $\hat{C}(\bbZ_{train})$ is a point estimator for the conditional mean operator based on $\bbZ_{train}$.
The summarized forecast errors are represented in Table \ref{table:trafficerror}, which shows the tapering and blockwise tapering estimators and their Bayesian versions are the best among all methods.

\begin{table}[bt]
	\caption{The root mean square errors of forecast results for traffic speed in Yeoui-daero.} \label{table:trafficerror}
		\begin{tabular}{|l|c|}
			\hline
			Method                   &  Error \\ \hline
			Tapering estimator & 2.80 \\ \hline
			Blockwise tapering estimator & 2.80   \\ \hline
			Banding estimator  & 2.85   \\ \hline
			Dual maximum likelihood estimator & 3.33  \\ \hline
			Tapering post-processed posterior  & 2.78 \\ \hline
			Blockwise tapering post-processed posterior  & 2.78 \\ \hline
			Banding post-processed posterior  & 2.87 \\ \hline
			Reduced-rank regression  & 3.59 \\ \hline
			Sparse reduced-rank regression  & 3.39\\ \hline
			Sparse orthogonal factor regression  & 4.31\\ \hline
		\end{tabular}
\end{table}

%\begin{table}[!tb]
%	\centering
%	\begin{tabular}{|c|c|c|c|}
%		\hline
%		index & route     & index & route    \\ \hline
%		1     & $a \to b$ & 5     & $e\to d$ \\ \hline
%		2     & $b \to c$ & 6     & $d\to c$ \\ \hline
%		3     & $c \to d$ & 7     & $c\to b$ \\ \hline
%		4     & $d \to e$ & 8     & $b\to a$ \\ \hline
%	\end{tabular}
%\label{table:sites}
%\caption{Index of routes in Gwanak-ro }
%\end{table}
%We measure the prediction error of the tapering and blockwise tapering estimators and compare them with other multivariate linear regression methods. 
%We used data of the first $86$ days to estimate linear operators $\hat{C}$, and used the other data to estimate the prediction error, defined as 

\se{Discussion}

We have considered the estimation of the conditional mean operator under the bandable covariance assumption, which is useful for the multivariate linear regression when there is a natural order in the variables. 
We showed that the plug-in estimator by the tapering estimator of covariance, which is the minimax optimal estimator for the class of bandable covariance, is sub-optimal for the conditional mean operator. This observation implies that 
when a function of the covariance matrix is to be estimated, 
the plug-in estimator by a minimax optimal covariance estimator may not be optimal.
We have proposed the blockwise tapering estimator and the blockwise tapering post-processed posterior as minimax-optimal estimators for the conditional mean operator under the bandable covariance assumption.
We constructed the estimators by modifying the tapering estimator and the tapering post-processed posterior to exclude the covariates which have small partial correlations with the response variables. 
Using the numerical studies, we also showed that the blockwise tapering estimator and the blockwise tapering post-processed posterior have smaller errors when $p$ is large enough.

%공분산에 bandable covariance assumption하고, 앞의 p_0개의 변수가 관측되었을때 나머지에 대한 conditional mean operator를 계산하는 것은 시계열 예측에 유용하다. 따라서 tapering estimator와 blockwise tapering estimator를 다른 시계열 예측 문제 such as traffic speed forecasting 에 적용하는 것이 future work이고, 
%또다른 future work으로서 다른 공분산 구조 가정하에서 minimax optimal한 공분산 추정방법이 conditional mean 관점에서도 minimax optimal인지 확인하고, 그렇지 않을 경우 minimax optimal estimator를 적용하는 것을 생각할 수 있다. 

\appendix

\se{Proofs of main theorems}\label{sec:proof}
In this section, we prove Theorems \ref{theorem:frequpper} and \ref{theorem:condmeanlower}, which give the convergence rate of the blockwise tapering estimator and the lower bound of the minimax risk, respectively. The proofs of the other theorems and lemma in Sections \ref{sec:CM_bandable} and \ref{sec:btppp} are given in the supplementary material.

We give notations for the proofs. 
Let $||\Sigma||_F =tr(\Sigma \Sigma^T)$ and $||\Sigma||_r$ be the Frobenius norm and the matrix $r$-norm for a covariance matrix $\Sigma$, respectively.
We also let $W_p(B_0,\nu_0)$ be Wishart distribution of which density function is 
\bea
\pi(\Sigma) \propto |\Sigma|^{(\nu_0-p-1)/2}e^{-tr(B_0^{-1}\Sigma)/2}, ~ \Sigma\in\calC_p,
\eea
where $B_0\in \calC_p$ and $\nu_0>p-1$.

\sse{ Proof of Theorem 4}\label{ssec:thm4}

In this subsection, we show the convergence rate of the blockwise tapering estimator by proving Theorem \ref{theorem:frequpper}.
First, we present Lemmas \ref{lemma:l1bandinv}-\ref{lemma:upperblockinv} necessary for the proof of Theorem \ref{theorem:frequpper}.

\begin{lemma}\label{lemma:l1bandinv}
	Let $p$ and $k$ be positive integers with $k<p$ and suppose $\Sigma_0 \in \calF_{p,\alpha}(M,M_0,M_1)$. There exist some positive constants $C_1$ and $C_2$ depending only on $M$, $M_0$ and $M_1$ such that 
	\bea
	||\Sigma_0^{-1} - B_k(\Sigma_0)^{-1} ||_1 &\le& C_1 k^{-\alpha}\\
	||\Sigma_0^{-1} - T_k(\Sigma_0)^{-1} ||_1 &\le& C_2 (\lfloor k/2\rfloor )^{-\alpha},
	\eea
	for all sufficiently large $k$.
\end{lemma}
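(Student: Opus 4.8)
The plan is to prove both bounds by establishing a general fact: if $A$ is a positive-definite matrix whose off-diagonal mass decays like the bandable class, and $A_k$ is a banded (or tapered) approximation with band parameter $k$, then $\|A^{-1} - A_k^{-1}\|_1$ is controlled by $\|A - A_k\|_1$ multiplied by the product of the operator norms of the two inverses. The identity I would lean on is the resolvent-type factorization
\be
A^{-1} - A_k^{-1} = A^{-1}(A_k - A)A_k^{-1}.
\ee
Since the matrix $1$-norm is sub-multiplicative, this gives $\|A^{-1}-A_k^{-1}\|_1 \le \|A^{-1}\|_1\,\|A_k-A\|_1\,\|A_k^{-1}\|_1$. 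Applying this with $A=\Sigma_0$ and $A_k = B_k(\Sigma_0)$ (respectively $A_k = T_k(\Sigma_0)$) reduces everything to three ingredients: (i) a bound on $\|\Sigma_0 - B_k(\Sigma_0)\|_1$ and $\|\Sigma_0 - T_k(\Sigma_0)\|_1$; (ii) a uniform bound on $\|\Sigma_0^{-1}\|_1$; and (iii) a uniform bound on $\|B_k(\Sigma_0)^{-1}\|_1$ and $\|T_k(\Sigma_0)^{-1}\|_1$.

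**The three ingredients.** For (i), the bandable condition $\sum_{|i-j|\ge k}|\sigma_{ij}| \le Mk^{-\alpha}$ directly controls the discarded mass of the banding operator: $\|\Sigma_0 - B_k(\Sigma_0)\|_1 = \max_j \sum_{i:|i-j|\ge k}|\sigma_{ij}| \le M k^{-\alpha}$. For the tapering operator, the weights $w_{ij}^{(k)}$ equal $1$ for $|i-j|\le k/2$, so $\Sigma_0 - T_k(\Sigma_0)$ has zero entries on the band $|i-j|\le k/2$ and entries bounded by $|\sigma_{ij}|$ elsewhere; hence $\|\Sigma_0 - T_k(\Sigma_0)\|_1 \le \max_j\sum_{i:|i-j|>k/2}|\sigma_{ij}| \le M(\lfloor k/2\rfloor)^{-\alpha}$, which explains the $\lfloor k/2\rfloor$ in the tapering bound. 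For (ii), I would use $\|\Sigma_0^{-1}\|_1 \le \sqrt{p}\,\|\Sigma_0^{-1}\|_2 = \sqrt{p}/\lambda_{\min}(\Sigma_0)$ — but since we want a constant independent of $p$, the better route is to invoke Theorem \ref{theorem:bandableinv}, which shows $\Sigma_0^{-1}$ is itself approximately bandable, so its $1$-norm (maximum absolute column sum) is bounded by a constant depending only on $M,M_0,M_1$. The eigenvalue bounds $\lambda_{\min}\ge M_1$, $\lambda_{\max}\le M_0$ supply the crude spectral control that anchors these estimates.

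**The main obstacle.** The delicate step is (iii): bounding $\|B_k(\Sigma_0)^{-1}\|_1$ and $\|T_k(\Sigma_0)^{-1}\|_1$ uniformly in $p$ and $k$. For this I must first show that the approximants $B_k(\Sigma_0)$ and $T_k(\Sigma_0)$ remain positive definite with eigenvalues bounded away from $0$ and $\infty$ for large $k$. The lower eigenvalue bound follows from $\|\Sigma_0 - T_k(\Sigma_0)\|_2 \le \|\Sigma_0 - T_k(\Sigma_0)\|_1 \le M(\lfloor k/2\rfloor)^{-\alpha} \to 0$, so by Weyl's inequality $\lambda_{\min}(T_k(\Sigma_0)) \ge M_1 - M(\lfloor k/2\rfloor)^{-\alpha} \ge M_1/2$ for $k$ large; the same argument handles the banding case and the upper eigenvalue bound. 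Once the approximant is well-conditioned and approximately banded, I would again appeal to a Theorem \ref{theorem:bandableinv}-style decay estimate for the inverse of a banded, well-conditioned matrix to conclude $\|T_k(\Sigma_0)^{-1}\|_1 \le C$ uniformly. Combining (i)–(iii) through the factorization identity yields $\|\Sigma_0^{-1}-B_k(\Sigma_0)^{-1}\|_1 \le C_1 k^{-\alpha}$ and $\|\Sigma_0^{-1}-T_k(\Sigma_0)^{-1}\|_1 \le C_2(\lfloor k/2\rfloor)^{-\alpha}$, as claimed. The technical care concentrates entirely in verifying that the inverse-of-approximant $1$-norm stays bounded independently of $p$; the rest is routine norm bookkeeping.
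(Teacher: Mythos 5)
Your skeleton---the factorization $\Sigma_0^{-1}-A_k^{-1}=\Sigma_0^{-1}(A_k-\Sigma_0)A_k^{-1}$, submultiplicativity of the matrix $\ell_1$ norm, the approximation bounds $\|\Sigma_0-B_k(\Sigma_0)\|_1\le Mk^{-\alpha}$ and $\|\Sigma_0-T_k(\Sigma_0)\|_1\le M(\lfloor k/2\rfloor)^{-\alpha}$, and the Weyl-inequality argument that the approximants stay well-conditioned for large $k$---is sound, and it correctly isolates where all the difficulty lives: uniform-in-$p$ and uniform-in-$k$ $\ell_1$ bounds on the three inverses. The paper's own proof is deferred to the supplementary material, so the comparison here is on substance. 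Judged on substance, your sketch has two spots that would fail if implemented exactly as written. First, Theorem \ref{theorem:bandableinv} does not by itself bound $\|\Sigma_0^{-1}\|_1$: it only controls the tail $\max_j\sum_{|i-j|>ak\log k}|w_{ij}|$. To extract a constant bound you must add the within-band sum, controlled by $(2ak\log k+1)\,\|\Sigma_0^{-1}\|_2\le (2ak\log k+1)/M_1$, and---crucially---you must apply the theorem at a \emph{fixed} $k_0$ (large depending only on $M,M_0,M_1,\alpha$, with $a\lambda>1$), not at the lemma's running $k$; applied at the running $k$, the within-band contribution is $O(k\log k)$ and the final bound degrades to $O(k\log k\cdot k^{-\alpha})$, destroying the claimed rate. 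Second, Theorem \ref{theorem:bandableinv} is stated for members of $\calF_{p,\alpha}(M,M_0,M_1)$, so for ingredient (iii) you must verify class membership of the approximants: since banding and tapering weights lie in $[0,1]$, the $\ell_1$-tail condition is inherited with the same $M$, and your Weyl step gives eigenvalues in $[M_1/2,\,M_0+M]$ for all sufficiently large $k$, so $B_k(\Sigma_0),T_k(\Sigma_0)\in\calF_{p,\alpha}(M,M_0+M,M_1/2)$ and the theorem applies with constants uniform in $k$ and $p$ (for small $p$ the crude bound $\sqrt{p}/\lambda_{\min}$ suffices). Both repairs are routine, but they are exactly the steps your phrase ``Theorem \ref{theorem:bandableinv}-style decay estimate'' elides.

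One further caution concerns logical order rather than correctness. Both this lemma and Theorem \ref{theorem:bandableinv} are proved in the (unavailable) supplement, and the $k^{-\alpha}$ term in Theorem \ref{theorem:bandableinv}'s bound has precisely the shape of $\|\Sigma_0^{-1}-B_k(\Sigma_0)^{-1}\|_1$, with the $k^{-a\lambda+1}$ term matching a Demko--Moss--Smith exponential-decay tail for the inverse of the banded approximant. This strongly suggests the two results are developed jointly, and raises the possibility that the supplement proves Theorem \ref{theorem:bandableinv} \emph{using} this lemma, in which case your derivation would be circular within the paper's architecture. A fully self-contained version of your argument would therefore need to establish the $O(1)$ $\ell_1$ bounds on the inverses independently; note that Demko--Moss--Smith alone gives only $\|B_k(\Sigma_0)^{-1}\|_1=O(k)$ (summing $\rho^{|i-j|/k}$ over a column), so some bandable-specific bootstrap is genuinely required there---you have located the hard point correctly, but citing Theorem \ref{theorem:bandableinv} is only a safe resolution of it if the paper's proof of that theorem does not route back through this lemma.
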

The proof of this lemma is given in the supplementary material.

\begin{lemma}\label{lemma:evminfreq}
	Let $n$, $k$ and $p$ be positive integers with $k \le  p$ and
	suppose $\Sigma_0\in\calF_{p,\alpha}(M,M_0,M_1)$. If $c\le \lambda_{\min}(\Sigma_0)/2$ and $\lfloor k/2\rfloor> \{4M/\lambda_{\min}(\Sigma_0)\}^{1/\alpha}$, then
	\bea
	P_{\Sigma_{0}} [ \lambda_{\min} \{T_k(S_n)\}   \le c] \le 2 p 5^k \exp(-\lambda n) ,
	\eea
	for some positive constant $\lambda$ depending only on $M_0$ and $M_1$.
\end{lemma}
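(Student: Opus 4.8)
The plan is to separate a deterministic lower bound on $\lambda_{\min}\{T_k(\Sigma_0)\}$ from the stochastic fluctuation $T_k(S_n)-T_k(\Sigma_0)$, and then to control the fluctuation in spectral norm by reducing it to spectral norms of small submatrices of $S_n-\Sigma_0$. First I would use Weyl's inequality together with the linearity of the tapering map to write $\lambda_{\min}\{T_k(S_n)\}\ge \lambda_{\min}\{T_k(\Sigma_0)\}-||T_k(S_n-\Sigma_0)||$. For the deterministic term, since $w_{ij}^{(k)}=1$ for $|i-j|\le k/2$ and $|w_{ij}^{(k)}-1|\le 1$ otherwise, the symmetric matrix $T_k(\Sigma_0)-\Sigma_0$ obeys $||T_k(\Sigma_0)-\Sigma_0||\le ||T_k(\Sigma_0)-\Sigma_0||_1 \le \max_j\sum_{i:|i-j|>k/2}|\sigma_{ij}| \le M(\lfloor k/2\rfloor)^{-\alpha}$, where the last inequality uses the bandable constraint defining $\calF_{p,\alpha}(M,M_0,M_1)$. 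The hypothesis $\lfloor k/2\rfloor>\{4M/\lambda_{\min}(\Sigma_0)\}^{1/\alpha}$ is calibrated exactly so that this error is strictly below $\lambda_{\min}(\Sigma_0)/4$, giving $\lambda_{\min}\{T_k(\Sigma_0)\}\ge 3\lambda_{\min}(\Sigma_0)/4$. Combined with $c\le\lambda_{\min}(\Sigma_0)/2$, the event $\{\lambda_{\min}\{T_k(S_n)\}\le c\}$ forces $||T_k(S_n-\Sigma_0)||\ge\lambda_{\min}(\Sigma_0)/4$, so it suffices to bound the probability of the latter.

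Writing $D=S_n-\Sigma_0$, the key reduction is the averaging identity $T_k(D)=k_h^{-1}\{\sum_l M_l^{*(k)}(D)-\sum_l M_l^{*(k_h)}(D)\}$ with $k_h=\lfloor k/2\rfloor$, which holds because summing the overlapping square blocks $M_l^{*(k)}(D)$ assigns to entry $(i,j)$ the weight $(k-|i-j|)_+$. Grouping the blocks $M_l^{*(k)}(D)$ according to the residue of $l$ modulo $k$ makes the supports within each group disjoint, so each group sum is block-diagonal and its spectral norm equals the maximum of its blocks' spectral norms; this yields $||T_k(D)||\le 3\max_l ||M_l^{(k)}(D)||$, where each $M_l^{(k)}(D)$ is an honest (unweighted) submatrix of $D$ of dimension at most $k$, and the maximum is over at most $p$ shifts $l$.

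It then remains to bound, for a fixed index block $I$ of size at most $k$, the tail of $||D_{I,I}||=\sup_{v\in S^{|I|-1}}|v^TD_{I,I}v|$. I would pass to an $\epsilon$-net $\mathcal N$ of the unit sphere of cardinality at most $5^k$, so that $||D_{I,I}||\le 2\max_{v\in\mathcal N}|v^TD_{I,I}v|$. For fixed $v$, $v^TD_{I,I}v=n^{-1}\sum_{i=1}^n\{(v^TZ_{i,I})^2-E(v^TZ_{i,I})^2\}$ is a centered average of i.i.d.\ sub-exponential (scaled $\chi^2_1$) variables whose scale is controlled by $v^T(\Sigma_0)_{I,I}v\le\lambda_{\max}(\Sigma_0)\le M_0$; Bernstein's inequality then gives $P[|v^TD_{I,I}v|\ge t]\le 2\exp(-\lambda n)$ at the fixed threshold $t\asymp\lambda_{\min}(\Sigma_0)/4\ge M_1/4$, with $\lambda$ depending only on $M_0$ and $M_1$. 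A union bound over the at most $5^k$ net points and the at most $p$ blocks then produces the claimed $2p5^k\exp(-\lambda n)$.

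The delicate step is the spectral-norm reduction of the second paragraph: one must verify the averaging identity for the trapezoidal weights and check that the residue-class grouping genuinely produces disjoint supports, so that one can avoid entrywise (Hadamard) reweighting and work with honest submatrices of $D$. A secondary point, easy to overlook, is keeping the Bernstein constant $\lambda$ free of $k$ and $p$; this is ensured because the threshold stays bounded below by $M_1/4$ while the sub-exponential scale stays bounded above by $M_0$, so only $M_0$ and $M_1$ enter $\lambda$.
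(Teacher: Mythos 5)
Your proof is correct and follows essentially the same route as the paper's (the Cai--Zhang--Zhou-style argument): Weyl's inequality plus the bandable bound $||T_k(\Sigma_0)-\Sigma_0||\le ||T_k(\Sigma_0)-\Sigma_0||_1\le M(\lfloor k/2\rfloor)^{-\alpha}<\lambda_{\min}(\Sigma_0)/4$ reduces the event to $||T_k(S_n-\Sigma_0)||\ge \lambda_{\min}(\Sigma_0)/4$, and the block-averaging decomposition via the operators $M_l^{(k)}$ and $M_l^{*(k)}$ (defined in the paper's notation section precisely for this purpose), a $1/2$-net of cardinality at most $5^k$, and Bernstein's inequality for the sub-exponential quadratic forms deliver the stated $2p5^k\exp(-\lambda n)$ with $\lambda$ depending only on $M_0$ and $M_1$. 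Two points you should make explicit, though both are standard: the averaging identity $T_k(D)=k_h^{-1}\{\sum_l M_l^{*(k)}(D)-\sum_l M_l^{*(k_h)}(D)\}$ is exact only for even $k$ (so take $k$ even without loss of generality, or adjust constants), and the net bound $||A||\le 2\max_{v\in\mathcal{N}}|v^TAv|$ at $\epsilon=1/2$ requires the eigenvector form of the net argument (writing $v^TAv=\lambda(1-||v-x||^2)+(v-x)^TA(v-x)$ for the top eigenvector $x$), since the naive Lipschitz bound $(1-2\epsilon)^{-1}$ degenerates at $\epsilon=1/2$.
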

The proof of this lemma is given in the supplementary material.

\begin{lemma}\label{lemma:upper1}
	Let $n$, $p$ and $k$ be positive integers with $k\le p$ and 
	suppose $\Sigma_0\in\calF_{p,\alpha}(M,M_0,M_1)$. If $ k\vee \log p =o(n)$ and $\epsilon_n=O(1)$, then there exists some positive constant $C$ depending only on $M_0$ and $M_1$ such that
	\bea
	E_{\Sigma_0}(||T_k^{(\epsilon_n)}(S_n)-\Sigma_0||^4)\le C,
	\eea
	for all sufficiently large $n$.

\end{lemma}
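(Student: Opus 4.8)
The plan is to first strip off the positive-definiteness adjustment and the deterministic bias, reducing the claim to a fourth-moment bound on the spectral norm of the purely stochastic tapering error $T_k(S_n-\Sigma_0)$, and then to control that error through the block decomposition of the tapering operator together with a net-based Gaussian concentration inequality for principal submatrices of $S_n-\Sigma_0$. For the reduction, note that the adjustment term has spectral norm $[\epsilon_n-\lambda_{\min}\{T_k(S_n)\}]\vee 0 \le \epsilon_n + \|T_k(S_n)\| \le \epsilon_n + \|T_k(S_n)-\Sigma_0\| + M_0$, so that with $\epsilon_n=O(1)$,
\be
\|T_k^{(\epsilon_n)}(S_n)-\Sigma_0\| \;\le\; 2\,\|T_k(S_n)-\Sigma_0\| + M_0 + O(1),
\ee
and hence $E_{\Sigma_0}(\|T_k^{(\epsilon_n)}(S_n)-\Sigma_0\|^4)\preceq E_{\Sigma_0}(\|T_k(S_n)-\Sigma_0\|^4)+1$. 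By linearity of $T_k$ I split $T_k(S_n)-\Sigma_0 = T_k(S_n-\Sigma_0) + \{T_k(\Sigma_0)-\Sigma_0\}$. The bias $T_k(\Sigma_0)-\Sigma_0$ is deterministic and, since every principal submatrix obeys $\|(\Sigma_0)_{I,I}\|\le\|\Sigma_0\|\le M_0$, the block decomposition below applied to $\Sigma_0$ yields $\|T_k(\Sigma_0)\|\le c_0 M_0$; thus $\|T_k(\Sigma_0)-\Sigma_0\|\le(c_0+1)M_0$ contributes only a constant depending on $M_0$. It therefore suffices to bound $E_{\Sigma_0}(\|T_k(S_n-\Sigma_0)\|^4)$.

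The crucial step is the tapering decomposition of \cite{cai2010optimal}: for any symmetric $D$, the tapered matrix $T_k(D)$, which is supported on the band $\{|i-j|<k\}$ with weights in $[0,1]$, can be written as a sum of a bounded number (independent of $p$ and $k$) of block-diagonal matrices whose diagonal blocks are principal submatrices $D_{I_l,I_l}$ with index intervals $I_l$ of length at most $2k$. Because the blocks within each arrangement are disjoint, the spectral norm of each block-diagonal piece equals the maximum over its blocks, giving
\be
\|T_k(S_n-\Sigma_0)\| \;\le\; c_0\,\max_{1\le l\le L}\,\big\|(S_n-\Sigma_0)_{I_l,I_l}\big\|, \qquad L\le 2p/k,
\ee
for an absolute constant $c_0$. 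Writing $N_l=(S_n-\Sigma_0)_{I_l,I_l}$, each of dimension at most $2k$, it remains to show $E_{\Sigma_0}(\max_{l}\|N_l\|^4)=O(1)$.

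For a single block, covering the unit sphere of the block's Euclidean space by a net and applying a Bernstein-type bound to the Gaussian quadratic forms $v^\top(S_n-\Sigma_0)v$ gives a tail of the form $P_{\Sigma_0}(\|N_l\|>t)\le C_1\,c^{\,k}\exp\{-\lambda n\,(t\wedge t^2)\}$ for some $\lambda>0$ depending only on $M_0$ and $M_1$, with the same $5^k$-type dimension factor appearing in Lemma \ref{lemma:evminfreq}. A union bound over $L\le 2p$ blocks, combined with the layer-cake identity $E(\max_l\|N_l\|^4)=\int_0^\infty 4t^3 P(\max_l\|N_l\|>t)\,dt$ split at a fixed threshold $t_0\ge 1$, then finishes the estimate: the part $t\le t_0$ contributes at most $t_0^4=O(1)$, while for $t>t_0$ the integrand is dominated by $4t^3\cdot 2p\,C_1 c^{\,k}\exp(-\lambda n t)$, whose prefactor has logarithm $O(\log p + k)=o(n)$ and is therefore swamped by $\lambda n t$, making the tail integral $o(1)$. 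Hence $E_{\Sigma_0}(\|T_k(S_n-\Sigma_0)\|^4)=O(1)$ with a constant depending only on $M_0$ and $M_1$, which together with the reduction proves the lemma.

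The main obstacle is precisely the block-decomposition inequality $\|T_k(D)\|\le c_0\max_l\|D_{I_l,I_l}\|$ and its matching net-based concentration. It is what replaces the lossy matrix-$\ell_1$ bound $\|T_k(D)\|\le (2k-1)\max_{i,j}|D_{ij}|$ (which would succeed only under the far stronger rate $k^2\log p=O(n)$) and keeps the union-bound prefactor at $o(e^{\lambda n t_0})$ under the mild assumption $k\vee\log p=o(n)$. Once this is in place, the remaining steps are routine bookkeeping.
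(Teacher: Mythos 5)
Your proof is correct and follows essentially the same route as the paper's supplementary proof: after absorbing the positive-definiteness adjustment and the deterministic bias, you control $\|T_k(S_n-\Sigma_0)\|$ via the Cai--Zhang--Zhou block decomposition $\|T_k(D)\|\le 3\max_l \|M_l^{(2\lfloor k/2\rfloor)}(D)\|$ and a net-based Bernstein tail of the form $p\,c^{k}\exp\{-\lambda n(t\wedge t^2)\}$, which is precisely the machinery underlying the paper's Lemma \ref{lemma:evminfreq} and its appeals to \cite{cai2010optimal}. The hypothesis $k\vee\log p=o(n)$ enters exactly as you use it --- making the union-bound prefactor $e^{O(k+\log p)}$ negligible against $e^{-\lambda nt}$ in the layer-cake integral --- and your bookkeeping correctly keeps the final constant dependent only on $M_0$ and $M_1$.
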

The proof of this lemma is given in the supplementary material.

\begin{lemma}\label{lemma:upperblockinv}
	Suppose $\Sigma_0\in\calF_{p,\alpha}(M,M_0,M_1)$. 
	Let $q$ and $k$ be positive integers with $k<q\le p$, and let $(\Sigma_0)_{1:q,1:q}$ and $ \{T_k^{(\epsilon_n)}(S_n)\}_{1:q,1:q}$ denoted by $\Sigma_{0,11}$ and $ T_k^{(\epsilon_n)}(S_n)_{11}$, respectively.
	If $k\vee \log p =o(n)$, $\lfloor k/2\rfloor> \{4M/\lambda_{\min}(\Sigma_0)\}^{1/\alpha}$ and $\epsilon_n=O(1)$, then there exist some positive constants $C$ and $\lambda$ depending only on $M$, $M_0$, $M_1$ and $\alpha$ such that
	\bea
	E_{\Sigma_0} (|| T_k^{(\epsilon_n)}(S_n)_{11}^{-1} -\Sigma_{0,11}^{-1} ||^2) \le C\Big\{  \frac{k + \log p}{n} + k^{-2\alpha} + \epsilon_n^2 +  \frac{p^{1/2} 5^{k/2} \exp(-\lambda n) }{\epsilon_n^2}  \Big\},
	\eea
	for all sufficiently large $n$.
\end{lemma}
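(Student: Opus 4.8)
The plan is to control the difference of the two inverse blocks through the resolvent identity together with an eigenvalue-interlacing argument, and then to split the expectation over a high-probability event on which the adjusted tapering estimator is well-conditioned. Write $A = T_k^{(\epsilon_n)}(S_n)_{11}$ and $B = \Sigma_{0,11}$. The starting point is the identity $A^{-1} - B^{-1} = -A^{-1}(A-B)B^{-1}$, which gives
\[
\|A^{-1} - B^{-1}\| \le \|A^{-1}\|\,\|A - B\|\,\|B^{-1}\|.
\]
Three observations then reduce the problem to quantities already controlled. First, since $\Sigma_{0,11}$ is a principal submatrix of $\Sigma_0\in\calF_{p,\alpha}(M,M_0,M_1)$, Cauchy interlacing gives $\lambda_{\min}(\Sigma_{0,11})\ge\lambda_{\min}(\Sigma_0)\ge M_1$, so $\|B^{-1}\|\le M_1^{-1}$. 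Second, a principal submatrix of a symmetric matrix has spectral norm no larger than the whole matrix, so $\|A-B\| = \|\{T_k^{(\epsilon_n)}(S_n)-\Sigma_0\}_{1:q,1:q}\| \le \|T_k^{(\epsilon_n)}(S_n)-\Sigma_0\|$. Third, by construction $\lambda_{\min}\{T_k^{(\epsilon_n)}(S_n)\}\ge\epsilon_n$, and interlacing again gives $\lambda_{\min}(A)\ge\epsilon_n$, hence the crude bound $\|A^{-1}\|\le\epsilon_n^{-1}$, valid on every sample.

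Next I would split on the event $\mathcal{E}=\{\lambda_{\min}(T_k(S_n))>c\}$ with $c=M_1/2\le\lambda_{\min}(\Sigma_0)/2$, so that Lemma \ref{lemma:evminfreq} applies and yields $P(\mathcal{E}^c)\le 2p\,5^k\exp(-\lambda n)$. On $\mathcal{E}$ the positive-definite adjustment $[\epsilon_n-\lambda_{\min}\{T_k(S_n)\}]\vee 0$ is at most $\epsilon_n$, so the adjustment only raises the minimum eigenvalue and $\|A^{-1}\|\le c^{-1}$, while $\|T_k^{(\epsilon_n)}(S_n)-\Sigma_0\|\le\|T_k(S_n)-\Sigma_0\|+\epsilon_n$. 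Combining with the displayed inequality,
\[
E\big[\|A^{-1}-B^{-1}\|^2\mathbf{1}_{\mathcal{E}}\big]\le \frac{1}{c^2M_1^2}\,E\big[(\|T_k(S_n)-\Sigma_0\|+\epsilon_n)^2\big],
\]
and the minimax spectral-norm rate of the tapering estimator of covariance \citep{cai2010optimal}, namely $E\|T_k(S_n)-\Sigma_0\|^2\preceq (k+\log p)/n + k^{-2\alpha}$, produces the first three terms of the claimed bound.

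For the complementary event I would use the universal bound $\|A^{-1}\|\le\epsilon_n^{-1}$ together with Cauchy--Schwarz:
\[
E\big[\|A^{-1}-B^{-1}\|^2\mathbf{1}_{\mathcal{E}^c}\big]\le \frac{1}{\epsilon_n^2 M_1^2}\,\big\{E\|T_k^{(\epsilon_n)}(S_n)-\Sigma_0\|^4\big\}^{1/2}\,\{P(\mathcal{E}^c)\}^{1/2}.
\]
The fourth moment is bounded by a constant by Lemma \ref{lemma:upper1}, and $\{P(\mathcal{E}^c)\}^{1/2}\le\{2p\,5^k\exp(-\lambda n)\}^{1/2}$ by Lemma \ref{lemma:evminfreq}; after relabeling $\lambda/2$ as $\lambda$ this yields the term $p^{1/2}5^{k/2}\exp(-\lambda n)/\epsilon_n^2$. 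Adding the two pieces gives the stated bound.

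The main obstacle is the randomness and potential ill-conditioning of $A^{-1}$: on a small-probability event the minimum eigenvalue of the tapering estimator can be pinned down near $\epsilon_n$, so $\|A^{-1}\|$ may blow up like $\epsilon_n^{-1}$. The delicate balance is that this bad event is exponentially unlikely (Lemma \ref{lemma:evminfreq}) while the estimation error has a uniformly bounded fourth moment (Lemma \ref{lemma:upper1}), so the Cauchy--Schwarz product still decays and the division by $\epsilon_n^2$ only reappears in the negligible last term. Care is also needed so that the two interlacing inequalities point the right way: one to lower-bound $\lambda_{\min}(A)$ (controlling $\|A^{-1}\|$) and the other to upper-bound the spectral norm of the submatrix difference (controlling $\|A-B\|$).
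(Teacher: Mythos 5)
Your proposal is correct and follows essentially the same route as the paper: the resolvent identity $A^{-1}-B^{-1}=A^{-1}(B-A)B^{-1}$ with interlacing bounds on the principal submatrices, a split on the event $\{\lambda_{\min}(T_k(S_n))>M_1/2\}$ controlled by Lemma \ref{lemma:evminfreq}, the good-event bound via Theorem 2 of \cite{cai2010optimal}, and a Cauchy--Schwarz step on the bad event combining the fourth-moment bound of Lemma \ref{lemma:upper1} with the exponential tail, yielding the $p^{1/2}5^{k/2}\exp(-\lambda n)/\epsilon_n^2$ term after relabeling $\lambda/2$ as $\lambda$. This is exactly the decomposition pattern the paper itself uses in the printed proof of Theorem \ref{theorem:frequpper}, so no gaps remain.
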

The proof of this lemma is given in the supplementary material.

Now we prove Theorem \ref{theorem:frequpper}.
\begin{proof}[Proof of Theorem~\ref{theorem:frequpper}]
	We have
	\bean
	&&E(|| \psi(\Sigma_0)  - T_k(S_n)_{YX} \Lambda^{(\epsilon_n)}\{ T_k(S_n)_{XX}; 2\lfloor ak\log k \rfloor\}||^2) \nonumber\\
	&\le & 2||\psi(\Sigma_0) - T_k(\Sigma_0)_{YX}\Lambda^{(0)}\{T_k(\Sigma_{0,XX});2\lfloor ak\log k \rfloor\}||^2 \label{formula:frequpper1}\\
	&&+ 4E(||T_k(\Sigma_{0})_{YX}- T_k(S_n)_{YX}  ||^2 ~ ||\Lambda^{(\epsilon_n)}\{ T_k(S_n)_{XX};2\lfloor ak\log k \rfloor\} ||^2)\label{formula:frequpper2}\\
	&&+ 4||T_k(\Sigma_0)||^2E( ||\Lambda^{(0)}\{T_k(\Sigma_{0,XX});2\lfloor ak\log k \rfloor\}-\Lambda^{(\epsilon_n)}\{ T_k(S_n)_{XX};2\lfloor ak\log k \rfloor\}||^2).\label{formula:frequpper3}
	\eean
	By Lemma \ref{lemma:trueerror}, there exist some positive constants $C_1$ and $\lambda_1$ depending only on $M$, $M_0$, $M_1$ and $\alpha$ such that
	\bea
	\eqref{formula:frequpper1}\le     C_1k^{-2(\alpha \wedge (a\lambda_1 -1))},
	\eea
	for all sufficientlay large $k$ with $\lfloor ak\log k \rfloor/2 \ge k$ and $p_0> 2\lfloor ak\log k \rfloor$.
	
	Next, we show the upper bound of \eqref{formula:frequpper2}. Let $\tilde{M}:=M_{p_0-2\lfloor ak\log k\rfloor+1}^{(2\lfloor ak\log k\rfloor)} $ in this proof. 
	By the definition of $\Lambda^{(\epsilon_n)}$, we have
	\bea
	||\Lambda^{(\epsilon_n)}\{ T_k(S_n)_{XX};2\lfloor ak\log k \rfloor\} || = ||T_k^{(\epsilon_n)}[\tilde{M}\{(S_n)_{XX}\}]^{-1}||,
	\eea
	and
	\bea
	\eqref{formula:frequpper2}
	&= &4E(||T_k(\Sigma_{0})_{YX}- T_k(S_n)_{YX}||^2 ||T_k^{(\epsilon_n)}[\tilde{M}\{(S_n)_{XX}\}]^{-1}||^2)\\
	&\le& \frac{4}{\epsilon_n^2}E\{||T_k(\Sigma_{0})_{YX}- T_k(S_n)_{YX} ||^2 I(\lambda_{\min}[\tilde{M}\{(S_n)_{XX}\}]<M_1/2)\}\\
	&&+ \frac{16}{M_1^2} E(||T_k(\Sigma_{0})_{YX}- T_k(S_n)_{YX} ||^2) \\
	&\le& \frac{4}{\epsilon_n^2} E(||T_k(\Sigma_{0})- T_k(S_n) ||^4)^{1/2} P\{\lambda_{\min}(S_n)<M_1/2\}^{1/2}\\
	&&+ \frac{16}{M_1^2} E(||T_k(\Sigma_{0})- T_k(S_n) ||^2) \\
	&\le& \frac{4}{\epsilon_n^2} E(2^3||\Sigma_{0}- T_k(S_n) ||^4 + 2^3 M^4(\lfloor k/2\rfloor)^{-4\alpha})^{1/2} P\{\lambda_{\min}(S_n)<M_1/2\}^{1/2}\\
	&&+ \frac{16}{M_1^2} E(2||\Sigma_{0}- T_k(S_n) ||^2 + 2M^2(\lfloor k/2\rfloor )^{-2\alpha}) \\
	&\le&  C_2\Big\{\frac{1}{\epsilon_n^2} p^{1/2}5^{k/2} \exp(-\lambda_2n) + \frac{k+\log p}{n} + k^{-2\alpha}\Big\},
	\eea
	for some positive constants $C_2$ and $\lambda_2$ depending only on $M$, $M_0$, $M_1$ and $\alpha$. The first inequality holds since $$\lambda_{\min}(T_k^{(\epsilon_n)}[\tilde{M}\{(S_n)_{XX}\}])\ge \epsilon_n.$$
	The third inequality holds since
	\bean
	||T_k(\Sigma_0)-\Sigma_0||_1 &\le& ||B_{\lfloor k/2\rfloor }(\Sigma_0)-\Sigma_0 ||_1\nonumber\\
	&\le& M(\lfloor k/2\rfloor )^{-\alpha}.
	\eean
	The last inequality holds by Lemmas \ref{lemma:evminfreq}, \ref{lemma:upper1} and Theorem 2 of \cite{cai2010optimal}.
	For the upper bound of \eqref{formula:frequpper3}, we have that there exists some positive constant $C_3$ depending only on $M$, $M_0$ and $M_1$ such that
	\bea
	&&E( ||\Lambda^{(0)}\{T_k(\Sigma_{0,XX});2\lfloor ak\log k \rfloor\}-\Lambda^{(\epsilon_n)}\{ T_k(S_n)_{XX};2\lfloor ak\log k \rfloor\}||^2)\\
	&=& E(||T_k\{\tilde{M}(\Sigma_{0,XX} )\}^{-1}-   T_k^{(\epsilon_n)}[\tilde{M}\{(S_n)_{XX}\}]^{-1}  ||^2)\\
	&\le& 2E(||\tilde{M}(\Sigma_{0,XX} )^{-1} - T_k^{(\epsilon_n)}[\tilde{M}\{(S_n)_{XX}\}]^{-1}||^2)\\
	&&+2 ||T_k\{\tilde{M}(\Sigma_{0,XX} )\}^{-1} -\tilde{M}(\Sigma_{0,XX} )^{-1}||^2\\
	&\le& 2E(||\tilde{M}(\Sigma_{0,XX} )^{-1} - T_k^{(\epsilon_n)}[\tilde{M}\{(S_n)_{XX}\}]^{-1}||^2)+ C_3(\lfloor k/2\rfloor)^{-2\alpha},
	\eea     
	for all sufficiently large $k$.
	The first equality is satisfied by the definition of $\Lambda^{(0)}$ and $\Lambda^{(\epsilon_n)}$. The last inequality holds by Lemma \ref{lemma:l1bandinv} since $\tilde{M}(\Sigma_{0,XX} ) \in \calF_{2\lfloor ak\log k\rfloor,\alpha}(M,M_0,M_1)$. We apply Lemma \ref{lemma:upperblockinv} and obtain that there exist some positive constants $C_4$ and $\lambda_3$ depending only on $M$, $M_0$, $M_1$ and $\alpha$ such that
	\bea
	&&E(||\tilde{M}(\Sigma_{0,XX} )^{-1} - T_k^{(\epsilon_n)}[\tilde{M}\{(S_n)_{XX}\}]^{-1}||^2) \\
	&\le& C_4 \Big\{\frac{k+\log(2ak\log k)}{n} + \epsilon_n^2 +  k^{-2\alpha} + \frac{(2ak\log k)^{1/2}5^{k/2}\exp(-\lambda_1 n)}{\epsilon_n^2}\Big\},
	\eea
	for all sufficiently large $n$.
	Thus, there exists some positive constant $C_5$ depending only on $M$, $M_0$, $M_1$ and $\alpha$ such that
	\bea
	\eqref{formula:frequpper3}\le C_5 \Big\{\frac{k+\log(2ak\log k)}{n} + \epsilon_n^2 +  k^{-2\alpha} + \frac{(2ak\log k)5^k\exp(-\lambda_1 n)}{\epsilon_n^2}\Big\},
	\eea
	for all sufficiently large $n$ and $k$. Collecting the upper bounds of \eqref{formula:frequpper1}, \eqref{formula:frequpper2} and \eqref{formula:frequpper3}, we complete the proof.	
	
\end{proof}

\sse{Proof of Theorem 5}\label{ssec:thm5}

In this subsection, we prove Theorem \ref{theorem:condmeanlower} which gives the lower bound of the minimax risk for the conditional mean operator under the bandable covariance assumption.
For probability measures $P_{\theta}$ and $P_{\theta'}$, we define 
\bea
|| P_\theta - P_{\theta'} ||_1 &=& \int | p_{\theta'}-p_{\theta} | d\nu\\
|| P_\theta \wedge P_{\theta'} || &=& \int  p_{\theta'}\wedge p_{\theta}  d\nu,
\eea
where $p_{\theta}$ and $p_{\theta'}$ are probability density functions of $P_\theta$ and $P_{\theta'}$, respectively, with respect to the reference measure $\nu$. 
First, we provide Lemma \ref{lemma:ineqTV} which is a reformulation of the proof of Lemma 6 in \cite{cai2010optimal}.
\begin{lemma}\label{lemma:ineqTV}
	Suppose $\Sigma$ and $\Sigma'$ are $p\times p$-positive definite matrices. 
	Let $P_\Sigma$ be the joint distribution of $X_1,X_2,\ldots,X_n$, which are independent and identically generated from $N_p(0,\Sigma)$.
	If $||\Sigma-\Sigma'||_2  ( ||\Sigma^{-1}||_2 \wedge ||\Sigma'^{-1}||_2   )  < 1/2$, then 
	\bea
	|| P_\Sigma - P_{\Sigma'} ||_1^2 \le n(||\Sigma^{-1}||_2 \wedge ||\Sigma'^{-1}||_2)^2 ||\Sigma-\Sigma'||_F^2.
	\eea
\end{lemma}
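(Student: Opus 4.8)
The plan is to control the $L_1$ distance between the two product Gaussian measures through the Kullback--Leibler divergence and Pinsker's inequality, reducing everything to a per-sample eigenvalue computation. Since $||P_\Sigma - P_{\Sigma'}||_1 = 2\,\mathrm{TV}(P_\Sigma, P_{\Sigma'})$, Pinsker's inequality gives $||P_\Sigma - P_{\Sigma'}||_1^2 \le 2\,D(P_\Sigma \| P_{\Sigma'})$, where $D(\cdot\|\cdot)$ is the KL divergence. Because $X_1,\ldots,X_n$ are i.i.d., the divergence tensorizes, $D(P_\Sigma\|P_{\Sigma'}) = n\,D(N_p(0,\Sigma)\|N_p(0,\Sigma'))$, so it suffices to bound the single-sample divergence by $\tfrac12 (||\Sigma^{-1}||_2 \wedge ||\Sigma'^{-1}||_2)^2\,||\Sigma-\Sigma'||_F^2$. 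As both the left-hand side and the hypothesis are symmetric in $(\Sigma,\Sigma')$, I would assume without loss of generality that $||\Sigma'^{-1}||_2 = ||\Sigma^{-1}||_2 \wedge ||\Sigma'^{-1}||_2$ and work with $D(N_p(0,\Sigma)\|N_p(0,\Sigma'))$; the opposite case follows by swapping $\Sigma$ and $\Sigma'$, i.e.\ using the divergence in the reverse direction.

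Next I would write the Gaussian KL divergence in closed form,
\[
D(N_p(0,\Sigma)\|N_p(0,\Sigma')) = \tfrac12\big[\mathrm{tr}(\Sigma'^{-1}\Sigma) - p - \log\det(\Sigma'^{-1}\Sigma)\big] = \tfrac12\sum_{i=1}^p\big(\lambda_i - 1 - \log\lambda_i\big),
\]
where $\lambda_1,\ldots,\lambda_p>0$ are the eigenvalues of $\Sigma'^{-1}\Sigma$, equivalently of the symmetric matrix $I + E$ with $E := \Sigma'^{-1/2}(\Sigma-\Sigma')\Sigma'^{-1/2}$. Writing $\lambda_i = 1 + \delta_i$, the $\delta_i$ are the eigenvalues of $E$, so $\max_i |\delta_i| \le ||E||_2 \le ||\Sigma'^{-1}||_2\,||\Sigma-\Sigma'||_2 < 1/2$ by the hypothesis; this is exactly the point at which the $1/2$ threshold is used.

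The crux is the scalar inequality $\delta - \log(1+\delta) \le \delta^2$ for all $|\delta|<1/2$, which I would verify by a short Taylor/convexity argument: the function $h(\delta)=\delta^2 - \delta + \log(1+\delta)$ satisfies $h(0)=h'(0)=0$, $h''(0)=1>0$, and one checks it stays nonnegative on $(-1/2,1/2)$. Applying it termwise yields
\[
D(N_p(0,\Sigma)\|N_p(0,\Sigma')) \le \tfrac12\sum_{i=1}^p \delta_i^2 = \tfrac12\,||E||_F^2 \le \tfrac12\,||\Sigma'^{-1}||_2^2\,||\Sigma-\Sigma'||_F^2,
\]
the last step using $||\Sigma'^{-1/2}(\Sigma-\Sigma')\Sigma'^{-1/2}||_F \le ||\Sigma'^{-1/2}||_2^2\,||\Sigma-\Sigma'||_F = ||\Sigma'^{-1}||_2\,||\Sigma-\Sigma'||_F$. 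Combining with the tensorization identity and Pinsker gives $||P_\Sigma - P_{\Sigma'}||_1^2 \le 2n\cdot\tfrac12\,||\Sigma'^{-1}||_2^2\,||\Sigma-\Sigma'||_F^2 = n\,(||\Sigma^{-1}||_2 \wedge ||\Sigma'^{-1}||_2)^2\,||\Sigma-\Sigma'||_F^2$, as claimed.

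The main obstacle is securing the sharp leading constant: a cruder bound such as $\delta-\log(1+\delta)\le 2\delta^2$ (also valid on $(-1/2,1/2)$) would lose a factor of two and prove the inequality only up to a constant, so the argument must exploit the precise scalar bound $\delta-\log(1+\delta)\le\delta^2$ together with the exact $1/2$ cutoff in the hypothesis. Everything else---the closed form of the Gaussian KL divergence, additivity under the i.i.d.\ product, Pinsker's inequality, and the submultiplicative bound $||ABC||_F\le||A||_2\,||B||_F\,||C||_2$---is standard.
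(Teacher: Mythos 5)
Your proof is correct and takes essentially the same route as the paper's: the lemma is a reformulation of the proof of Lemma 6 in \cite{cai2010optimal}, which likewise bounds $|| P_\Sigma - P_{\Sigma'} ||_1^2$ by the tensorized Gaussian Kullback--Leibler divergence via Pinsker's inequality and controls the eigenvalues $\delta_i$ of $\Sigma'^{-1/2}(\Sigma-\Sigma')\Sigma'^{-1/2}$ through the scalar bound $\delta - \log(1+\delta) \le \delta^2$ on $|\delta|<1/2$. Your monotonicity check of $h(\delta)=\delta^2-\delta+\log(1+\delta)$, the WLOG reduction exploiting the symmetry of the $L_1$ distance, and the step $||E||_F \le ||\Sigma'^{-1}||_2\, ||\Sigma-\Sigma'||_F$ are all sound, so the argument is complete as written.
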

The proof of this lemma is given in the supplementary material.

\begin{proof}[Proof of Theorem \ref{theorem:condmeanlower}]
	Let $\calC_{p,p_0} = \{ A \in \bbR^{p\times p} : A_{1:p_0,1:p_0} \in \calC_{p_0} \}$, and $(\calC_{p,p_0})^{ \chi} $ be the space of estimators on $\calC_{p,p_0}$, where $\chi$ is the sample space.
	Since for an arbitrary $B \in \bbR^{(p-p_0)\times p_0}$ there exists $A_B\in \calC_{p,p_0}$ such that $\psi(A_B;p_0)=B$, it suffices to show 
	\bea
	\inf_{\hat{A} \in (\calC_{p,p_0})^{ \chi} }\sup_{ \Sigma_0\in \calF_{p,\alpha} }E (|| \psi(\Sigma_0;p_0) -\psi(\hat A;p_0) ||^2 )
	&\ge& C  n^{-2\alpha/(2\alpha+1)},
	\eea
	for some positive constant $C$.
	Let $d(\hat{\Sigma},\Sigma_0) = || \hat{\Sigma}_{YX}\hat{\Sigma}_{XX}^{-1} -\Sigma_{0,YX}\Sigma_{0,XX}^{-1} ||_2$, which is a semimetric on $\calC_{p,p_0}$.
	For a positive integer $k$ with $k<p_0/2$, let $\Theta =  \{0,1\}^k$, and let $H(\theta,\theta')$ be the Hamming distance on $\Theta$. 
	We define 
	\bea
	\Sigma(\theta) = \bar{M} I_p +  \tau \sum_{m=1}^k \theta_m  B(m+p_0;k) , \text{ }\theta=(\theta_1,\theta_2,\ldots,\theta_k)\in \Theta ,
	\eea
	where $\tau =\{M (2k)^{-\alpha-1}\}\wedge 
	\{(M_0-\bar{M})/(2k)\}$, $\bar{M}=(M_0+M_1)/2$, $B(l;k)= (b_{ij})_{1\le i,j\le p}$ and  
	\bea
	b_{ij} = I(i=l \text{ and } l-2k\le j\le l-1 \text{, or } j=l \text{ and } l-2k\le i\le l-1) .
	\eea
	Note that $\Sigma(\theta) \in \calF_{p,\alpha}(M,M_0,M_1)$ for all $\theta\in \Theta$. 
	Thus, 
	\bean
	\inf_{\hat{A} \in (\calC_{p,p_0})^{ \chi} }\sup_{\Sigma_0\in  \calF_{p,\alpha} }E || \psi(\Sigma_0;p_0) -\psi(\hat A;p_0) ||^2 
	&\ge& 	\inf_{\hat{A} \in (\calC_{p,p_0})^{ \chi} }\sup_{ \theta \in \Theta } E || \psi(\Sigma(\theta);p_0) -\psi(\hat A;p_0) ||^2\nonumber \\
	&=&	\inf_{\hat{A} \in (\calC_{p,p_0})^{ \chi} }\sup_{ \theta \in \Theta } E[d^2\{\Sigma(\theta) ,\hat A\}] \label{formula:lower2}
	\eean
	By the Assouad lemma \citep[Lemma 4]{cai2010optimal}, we have, for all $s>0$,
	\bean
	\sup_{\theta\in\Theta} 2^s E[ d^s\{\hat{A},\Sigma(\theta)\}] \ge \min_{H(\theta,\theta')\ge 1} \frac{d^s\{\Sigma(\theta),\Sigma(\theta')\}}{H(\theta,\theta')} \frac{k}{2} \min_{H(\theta,\theta')=1} || P_\theta \wedge P_{\theta'}||,\label{thm:lower1}
	\eean
	where $P_{\theta}$ is the joint distribution of $n$ independent observations from the multivariate normal distribution with mean zero and covariance $\Sigma(\theta)$.
	
	First, we show the lower bound of 
	\bea
	\frac{d^s\{\Sigma(\theta),\Sigma(\theta')\}}{H(\theta,\theta')}.
	\eea
	For vector $v=\{ I (p_0-k< i \le p_0)\}_{1\le i\le p_0}$ and all $\theta,\theta' \in\Theta$ with $\theta \neq \theta'$, we have
	\bea
	d^2\{\Sigma(\theta),\Sigma(\theta')\} &\ge&
	\frac{||\{\Sigma(\theta)_{YX}- \Sigma(\theta')_{YX}\} (\bar{M} I_{p_0})^{-1}v||^2}{||v||^2} \\
	&\ge& 
	\bar{M}^{-2} \frac{||\{\Sigma(\theta)_{YX}- \Sigma(\theta')_{YX}\}v||^2}{||v||^2}\\
	&\ge& \bar{M}^{-2}H(\theta,\theta')\tau^2 k ,
	\eea
	which implies 
	\bean
	\min_{H(\theta,\theta')\ge1}\frac{d^2\{\Sigma(\theta),\Sigma(\theta')\}}{H(\theta,\theta')}\ge 
	\bar{M}^{-2}\tau^2 k  .\label{thm:lower2}
	\eean
	
	Next, we consider the lower bound of $	\min_{H(\theta,\theta')=1} ||P_\theta \wedge P_{\theta'}||$.
	We assume $H(\theta,\theta')=1$, $2\tau k <\bar{M}$, $2\tau k/(\bar{M}-2\tau k)<1/2$ and $(\sqrt{2}n^{1/2}k^{1/2}\tau)/(\bar{M}-2\tau k) \le 1/2$.
	We have 
	\bea
	||\Sigma(\theta)-\Sigma(\theta')||_2&\le& ||\Sigma(\theta)-\Sigma(\theta')||_1\\
	& \le&  2\tau k,
	\eea
	and 
	\bea
	||\Sigma(\theta)^{-1}|| &=& \lambda_{\min}\{\Sigma(\theta)\}^{-1} \\
	&\le& \frac{1}{ \bar{M} - ||\Sigma(\theta)- \bar{M} I_p||}\\
	&\le&  \frac{1}{ \bar{M} - 2\tau k},
	\eea
	where the first inequality holds by Lemma 4.14 in \cite{lee2020post}.
	Since $2\tau k/(\bar{M}-2\tau k)<1/2$, Lemma \ref{lemma:ineqTV} gives
	\bean
	\min_{H(\theta,\theta')=1} ||P_\theta \wedge P_{\theta'}|| &=&  1- \max_{H(\theta,\theta')=1} ||P_\theta - P_{\theta'}||_1/2 \nonumber\\
	&\ge& 1 -   \frac{n^{1/2}|| \Sigma(\theta) -\Sigma(\theta') ||_F}{2(\bar{M}-2\tau k)} \nonumber\\
	&\ge& 1- \frac{\sqrt{2}n^{1/2}k^{1/2}\tau}{\bar{M}-2\tau k}, \label{formula:condmeanlower}
	\eean
	where the last inequality holds since $|| \Sigma(\theta) -\Sigma(\theta') ||_F\le (8k\tau^2)^{1/2}$.
	Since $(\sqrt{2}n^{1/2}k^{1/2}\tau)/(\bar{M}-2\tau k) \le 1/2$,
	\bea
	\min_{H(\theta,\theta')=1} ||P_\theta \wedge P_{\theta'}|| \ge 1/2.
	\eea
	Thus, collecting inequalities \eqref{formula:lower2}, \eqref{thm:lower1} and \eqref{thm:lower2}, we get
	\bea
	\inf_{\hat{\Sigma}}\max_{\theta\in\{0,1\}^k} 2^2 \bbE_\theta d^2\{\hat{\Sigma},\Sigma(\theta)\}
	&\ge& ck^2\tau^2,
	\eea
	for some positive constant $c$. Since $\tau \le M(2k)^{-\alpha -1}$
	we obtain the desired minimax lower bound by setting $k=(\gamma n)^{1/(2\alpha+1)}/2$, where $\gamma= 16M^2 / (\bar{M}^2)$.
	
	Finally, we check the assumed conditions on $k$:
	\bea
	2k &<& p_0\\
	2\tau k &<& \bar{M}\\
	2\tau k / (\bar{M}-2\tau k) &<& 1/2\\
	\frac{\sqrt{2}n^{1/2}k^{1/2}\tau}{\bar{M}-2\tau k} &\le& 1/2.
	\eea
	Note $\tau \le M(2k)^{-\alpha -1}$ and $k =(\gamma n)^{1/(2\alpha+1)}/2$. 
	The first condition is satisfied when $(\gamma n)^{1/(2\alpha+1)}  < p_0$.
	For the other conditions, we have
	\bea
	2\tau k &\le& M  (\gamma n)^{-\alpha/(2\alpha+1)}\\
	2\tau k / (\bar{M}-2\tau k) &\le&\frac{M  (\gamma n)^{-\alpha/(2\alpha+1)}}{\bar{M}-	M  (\gamma n)^{-\alpha/(2\alpha+1)}} \\
	\frac{\sqrt{2}n^{1/2}k^{1/2}\tau}{\bar{M}-2\tau k} &\le& \frac{M/\gamma^{1/2}}{\bar{M}-M(\gamma n)^{-\alpha/(2\alpha+1)}} .
	\eea
	The first and second upper bounds of these inequalities can be arbitrary small numbers for all sufficiently large $n$. The last upper bound is smaller than $1/2$ for all sufficiently large $n$ since $\gamma= 16M^2 / (\bar{M}^2)$. Thus, the assumed conditions on $k$ are satisfied for all sufficiently large $n$.

	%we use the same technique \bch of $\lra$ used in\ech Lemma 6 in \cite{cai2010optimal}. 
	%Let $H(\theta,\theta')=1$, $D_1=(\Sigma(\theta')-\Sigma(\theta))(\Sigma(\theta))^{-1}$. If $||D_1||_1\le c_2k\tau$ for a positive constant $c_3$ and $\tau \asymp k^{-\alpha-1}$, then  
	%\bea
	%||P_\theta - P_{\theta'}||_1^2 \le c_3 n ||D_1||_F^2,
	%\eea
	%for a positive constant $c_3$.
	%We need to check $||D_1||_1\le k\tau$ and $||D_1||^2_F \le k\tau^2$.

\end{proof}

\bibliographystyle{dcu}
\bibliography{cov-ppp}

\end{document}

% --- supplement: supp.tex ---

\maketitle

	In this supplementary material, we give the proofs of Theorems \ref{main-theorem:bandableinv}-\ref{main-theorem:taperupper}, Lemma \ref{main-lemma:trueerror} and Theorems \ref{main-theorem:TPPPupper}-\ref{main-theorem:PPPupper} in the main paper, and the proofs of lemmas given in Section \ref{main-sec:proof}.

	\section{Proof of Theorem 1}

	In this section, we prove Theorem \ref{main-theorem:bandableinv}. 
	First, we present Lemma \ref{lemma:bandinv} which is necessary for the proof of Theorem \ref{main-theorem:bandableinv}. %which gives the decreasing rate of the inverse matrix of a band matrix. %Given a positive real number $a$ and an integer $k$, we shows that the absolute sum of the elements which are $ak\log k$ away from an arbitrary index is bounded by $C_1 k^{-C_2a+1}$, for some positive constants $C_1$ and $C_2$.
	\begin{lemma}\label{lemma:bandinv}
		Let $p$ and $k$ be positive integers with $k<p$. 
		Suppose $\Sigma\in\calC_p$ is a $k$-band matrix, and let $\Sigma^{-1}=(w_{ij})$. For all $a>0$ and all sufficiently large $k$ with $p>k \vee (ak\log k)$,
		\bea
		\max_j\sum_i \{ |w_{ij}|: |i-j|>ak\log k\} \le -\frac{2C}{\log q} k^{2a\log q +1},
		\eea
		where $q=(\kappa^{1/2}-1)/(\kappa^{1/2}+1)$, $C= \{||\Sigma^{-1}||\vee (1+\kappa^{1/2})^2\}/(2||\Sigma||)$, and $\kappa$ is the spectral condition number of $\Sigma$.
	\end{lemma}
	\begin{proof}
		Since $\Sigma$ is a $k$-band matrix, 
		\bea
		|w_{ij}|\le C q^{2|i-j|/k}, ~i,j \in \{1,2,\ldots,p\},
		\eea
		where $q=(\kappa^{1/2}-1)/(\kappa^{1/2}+1)$, $C= \{||\Sigma^{-1}||\vee (1+\kappa^{1/2})^2\}/(2||\Sigma||)$ and $\kappa$ is the spectral condition number of $\Sigma$ defined as $\lambda_{\max}(\Sigma)/\lambda_{\min}(\Sigma)$ \citep[Theorem 2.4]{demko1984decay}. 
		Note that 
		\bea
		\max_j\sum_{\{i:|i-j|>ak\log k \}} q^{2|i-j|/k} &\le&  \frac{2}{1-q^{2/k}} q^{2a\log k }\\
		&=&  \frac{2}{k(1-q^{2/k})} k^{\log q^{2a} +1},
		\eea
		and 
		\bea
		\lim\limits_{k\lra\infty}k(q^{2/k}-1) = \Big(\frac{d q^{2x}}{dx}\Big)_{x=0} = 2\log q.
		\eea
		Thus, we get
		\bea
		\max_j\sum_{\{i:|i-j|>ak\log k\}} q^{2|i-j|/k} \le \frac{2}{-\log q} k^{\log q^{2a} +1}
		\eea
		for all sufficiently large $k$.
	\end{proof}

	Now we prove Theorem \ref{main-theorem:bandableinv}.
	\begin{proof}[Proof of Theorem \ref{main-theorem:bandableinv}]
		Let $B_k(\Sigma_0)^{-1} = (w_{ij}^*) $. We have
		\bea
		\sum_{i\in I_j} |w_{ij}| &\le& \sum_{i\in I_j} |w_{ij}^*| + \sum_{i\in I_j} |w_{ij}-w_{ij}^*|  \\
		&\le& \sum_{i\in I_j} |w_{ij}^*|  + ||\Sigma_0^{-1} - B_k(\Sigma_0)^{-1} ||_1,
		\eea
		where $I_j = \{ i \in \{1,2,\ldots,p\} : |i-j|>ak\log k\}$. By Lemma \ref{main-lemma:l1bandinv}, there exists some positive constant $C_1$ depending only on $M$, $M_0$ and $M_1$ such that
		\bean
		||\Sigma_0^{-1} - B_k(\Sigma_0)^{-1} ||_1 \le C_1 k^{-\alpha},\label{formula:bandeableinv1}
		\eean
		for all sufficiently large $k$.
		
		For the upper bound of $\sum_{i\in I_j} |w_{ij}^*| $, Lemma \ref{lemma:bandinv} gives that, for all sufficiently large $k$ with $p > k\vee (ak\log k)$,
		\bean
		\max_j\sum_i \{ |w_{ij}^*|: |i-j|>ak\log k\} \le -\frac{2C_2}{\log q} k^{2a\log q +1},\label{formula:bandeableinv2}
		\eean
		where $q=(\kappa^{1/2}-1)/(\kappa^{1/2}+1)$, $C_2= \{||\Sigma^{-1}||\vee (1+\kappa^{1/2})^2\}/(2||\Sigma||)$ and $\kappa$ is the spectral condition number of $\Sigma$.
		By collecting the inequalities \eqref{formula:bandeableinv1} and \eqref{formula:bandeableinv2}, we complete the proof.
		 
	\end{proof}
	
	\section{Proof of Theorem 2}

	In this section, we prove Theorem \ref{main-theorem:taperupper} which gives the convergence rate of the tapering estimator. First, we present Lemma \ref{lemma:pdadjust} necessary for the proof of Theorem \ref{main-theorem:taperupper}.

	\begin{lemma}\label{lemma:pdadjust}
		Let $\Sigma$ and $\Sigma_0$ be $p\times p$-symmetric matrices and 
		$\Sigma^{(\epsilon_n)} = \Sigma + [\{\epsilon_n - \lambda_{\min}(\Sigma)\}\vee 0]I_p $. For all positive integer $r$ and all positive real number $\epsilon_n>0$,
		\bea
		||\Sigma^{(\epsilon_n)}-\Sigma_0||^r \le 2^{2r-1} ||\Sigma-\Sigma_0||^r + 4^{r-1} |\epsilon_n|^r.
		\eea
	\end{lemma}
	\begin{proof}
		By Lemma 4.14 in \cite{lee2020post}, we have 
		\bea
		\lambda_{\min}(\Sigma) &=& \lambda_{\min}(\Sigma-\Sigma_0+\Sigma_0)\\
		&\ge& \lambda_{\min}(\Sigma_0) - ||\Sigma-\Sigma_0||.
		\eea
		Using this inequality, we get
		\bean
		||\Sigma-\Sigma_0||&\ge& ||\Sigma-\Sigma_0||I( \lambda_{\min}(\Sigma)<0)\nonumber \\
		&\ge& \{||\Sigma-\Sigma_0|| -\lambda_{\min}(\Sigma_0)\} I( \lambda_{\min}(\Sigma)<0)\nonumber\\
		&\ge& -\lambda_{\min}(\Sigma)I( \lambda_{\min}(\Sigma)<0).\label{formula:lambdamin}
		\eean
		We also have 
		\bean
		\{\epsilon_n -\lambda_{\min}(\Sigma)\}\vee 0 &\le&
		\{\epsilon_n -\lambda_{\min}(\Sigma)I(\lambda_{\min}(\Sigma)<0)\}\vee 0 \nonumber\\
		&=&\epsilon_n -\lambda_{\min}(\Sigma)I(\lambda_{\min}(\Sigma)<0).\label{formula:lambdamin2}
		\eean
		Thus, we have
		\bea
		||\Sigma^{(\epsilon_n)}-\Sigma_0||^r &=& ||\Sigma-\Sigma_0 +[\{\epsilon_n - \lambda_{\min}(\Sigma)\}\vee 0]I_p    ||^r  \\
		&\le &
		2^{r-1}||\Sigma-\Sigma_0||^r + 
		2^{r-1}|\epsilon_n - \lambda_{\min}(\Sigma) I(\lambda_{\min}(\Sigma)<0)|^r\\
		&\le &2^{r-1}||\Sigma-\Sigma_0||^r + 4^{r-1}|\epsilon_n|^r +   4^{r-1}|-\lambda_{\min}(\Sigma) I(\lambda_{\min}(\Sigma)<0)|^r \\
		&\le& 2^{r-1}||\Sigma-\Sigma_0||^r + 4^{r-1}|\epsilon_n|^r + 4^{r-1} ||\Sigma-\Sigma_0||^r\\
		&\le& 2^{2r-1} ||\Sigma-\Sigma_0||^r + 4^{r-1} |\epsilon_n|^r,
		\eea
		where the first and third inequalities are satisfied inequalities \eqref{formula:lambdamin2} and \eqref{formula:lambdamin}, respectively.	 
		%where the second and third equalities hold by the triangle inequality and the definition of convex function. 
	\end{proof}

	\begin{proof}[Proof of Theorem \ref{main-theorem:taperupper}]

		%\bea
		%E(|| \psi(\Sigma_0)  -  \psi(T_k^{(\epsilon_n)}(S_n)) ||^2) &\le& 2E(||\Sigma_{0,YX} T_k(\Sigma_{0,XX})^{-1} -  \psi(T_k^{(\epsilon_n)}(S_n)) ||^2) \\
		%&&+2M_0 || \Sigma_{0,XX}^{-1}- T_k(\Sigma_{0,XX})^{-1}||\\
		%&\le& 2E(||\Sigma_{0,YX} T_k(\Sigma_{0,XX})^{-1} -  \psi(T_k^{(\epsilon_n)}(S_n)) ||^2) +2M_0  C_1(k/2)^{-\alpha}
		%\eea
		%for some positive constant $C_1$ depending only on $M_0$, $M_1$ and $M$.
		%The second inequality holds by Lemma \ref{lemma:l1bandinv} since $\Sigma_{0,XX}\in \calF_{p_0,\alpha}(M,M_0,M_1)$.
		
		We have
		\bean
		&&E(|| \Sigma_{0,YX} \Sigma_{0,XX}^{-1} -  \psi\{T_k^{(\epsilon_n)}(S_n)\} ||^2) \nonumber\\ 
		&\le& 2E(||T_k^{(\epsilon_n)}(S_n)_{YX}-\Sigma_{0,YX}||^2 || T_k^{(\epsilon_n)}(S_n)_{XX}^{-1}||^2) \nonumber\\
		&&+ 2||\Sigma_{0,YX}||^2E(||T_k^{(\epsilon_n)}(S_n)_{XX}^{-1}-\Sigma_{0,XX}^{-1}||^2 ) \nonumber \\
		&\le& \frac{2^3}{M_1^2}E(||T_k^{(\epsilon_n)}(S_n)_{YX}-\Sigma_{0,YX}||^2
		I[\lambda_{\min}\{T_k(S_n)_{XX}\} >M_1/2]) \label{formula:taperupper1}\\
		&&+\frac{2}{\epsilon_n^2}E(||T_k^{(\epsilon_n)}(S_n)_{YX}-\Sigma_{0,YX}||^2 
		I[\lambda_{\min}\{T_k(S_n)_{XX}\} \le M_1/2])\label{formula:taperupper2}\\
		&&+ 2M_0^2E(||T_k^{(\epsilon_n)}(S_n)_{XX}^{-1}-\Sigma_{0,XX}^{-1}||^2 ),\label{formula:taperupper3}
		\eean
		where the last inequality holds since $\lambda_{\min}\{T_k^{(\epsilon_n)}(S_n)_{XX}\}\ge \lambda_{\min}\{T_k^{(\epsilon_n)}(S_n)\}\ge\epsilon_n$.
		We have that there exists some positive constant $C_1$ depending only on $M$, $M_0$, $M_1$ and $\alpha$ such that
		\bea
		\eqref{formula:taperupper1}
		&\le& \frac{2^3}{M_1^2}E(||T_k^{(\epsilon_n)}(S_n)-\Sigma_{0}||^2) \\
		&\le& \frac{2^3}{M_1^2}\{2^3 E(||T_k(S_n)-\Sigma_{0}||^2) + 4 \epsilon_n^2 \}\\
		&\le&  C_1 \Big\{  \frac{k + \log p}{n} + k^{-2\alpha} + \epsilon_n^2\Big\},
		\eea
		for all sufficiently large $n$. The second and last inequalities hold by Lemma \ref{lemma:pdadjust} and Theorem 2 of \cite{cai2010optimal}, respectively.
		By Lemmas \ref{main-lemma:evminfreq} and \ref{main-lemma:upper1}, there exist some positive constants $C_2$ and $\lambda_1$ depending only on $M_0$ and $M_1$ such that
		\bea
		\eqref{formula:taperupper2}
		&\le& 
		\frac{2}{\epsilon_n^2}E(||T_k^{(\epsilon_n)}(S_n)-\Sigma_{0}||^4)^{1/2} P (\lambda_{\min}(T_k(S_n)_{XX}) \le M_1/2)^{1/2}\\
		&\le& \frac{C_2 }{\epsilon_n^2}   p^{1/2}5^{k/2}\exp(-\lambda_1 n),
		\eea
		for all sufficiently large $n$.
		Finally, by Lemma \ref{main-lemma:upperblockinv}, we get the upper bound of \eqref{formula:taperupper3} as 
		\bea
		\eqref{formula:taperupper3} \le 
		C_3 \Big(  \frac{k + \log p}{n} + k^{-2\alpha} + \epsilon_n^2 + \frac{p^{1/2}5^{k/2} \exp(-\lambda_2 n)}{\epsilon_n^2}\Big),
		\eea
		for some positive constants $C_3$ and $\lambda_2$ depending only on $M$, $M_0$, $M_1$ and $\alpha$. Combining the upper bounds of \eqref{formula:taperupper1}, \eqref{formula:taperupper2} and \eqref{formula:taperupper3}, we complete the proof.	 
		
	\end{proof}

	\section{Proof of Lemma 3}
	In this section, we prove the Lemma \ref{main-lemma:trueerror}. For this proof, we present Lemma \ref{lemma:bandblockinv}.
	
	\begin{lemma}\label{lemma:bandblockinv}
		%k-밴드행렬이 있을때, 그것의 inverse의 ak\log k submatrix는 
		%2a\logk submatrix의 inverse의 ak\log k submatrix와 오차 계산
		Let $q$ and $k$ be positive integers with $k<q$, and $A\in\calC_q$ be a $k$-band matrix. For positive real numbers $a$ and $b$ with $q >\lfloor ak\log k\rfloor +\lfloor bk\log k\rfloor $, we express $A$ and $A^{-1}$ as
		\bea
		A=
		\begin{bmatrix}
			A_{11} & A_{12}\\
			A_{21} & A_{22}
		\end{bmatrix},
		A^{-1} =\Omega = 
		\begin{bmatrix}
			\Omega_{11} & \Omega_{12}\\
			\Omega_{21} & \Omega_{22}
		\end{bmatrix} ,
		\eea
		where $A_{22},\Omega_{22}\in \bbR^{(\lfloor ak\log k\rfloor +\lfloor bk\log k\rfloor  )\times (\lfloor ak\log k\rfloor  + \lfloor bk\log k\rfloor )}$.
		There exist some positive constants $\lambda$ and $C$ depending only on $||A||$ and $||A^{-1}||$ such that 
		\bea
		||M_{\lfloor ak\log k\rfloor +1}^{(\lfloor bk\log k \rfloor )}(\Omega_{22}) -M_{\lfloor ak\log k\rfloor +1}^{(\lfloor bk\log k\rfloor )} (A_{22}^{-1})  ||\le C k^{-a\lambda+1},
		\eea
		for all sufficiently large $k$ and $q$ with $\lfloor ak\log k \rfloor \ge k$ and $q >\lfloor ak\log k\rfloor +\lfloor bk\log k\rfloor $.
		
	\end{lemma}
	\begin{proof}
		Let
		$\{B_{\lfloor ak\log k\rfloor }(\Omega)\}_{12}= \{B_{\lfloor ak\log k\rfloor }(\Omega)\}_{1:q^*,(q^*+1):q},$
		where $q^* = q- \lfloor ak\log k\rfloor - \lfloor bk\log k\rfloor $ and $B_{\lfloor ak\log k\rfloor }$ is $\lfloor ak\log k\rfloor $-banding operator defined in \cite{bickel2008regularized}.
		First, we show the equality
		\bean
		M_{\lfloor ak\log k\rfloor +1}^{(\lfloor bk\log k\rfloor )}(A_{22}^{-1}[I-A_{21} \{B_{\lfloor ak\log k\rfloor }(\Omega)\}_{12}]) = M_{\lfloor ak\log k\rfloor +1}^{(\lfloor bk\log k\rfloor )}(A_{22}^{-1}). \label{formula:block2} 
		\eean
		%Note that $A$ is a $k$-band matrix and $B_{\lfloor ak\log k\rfloor }(\Omega)$ is a $\lfloor ak\log k\rfloor $-band matrix.
		Since $\lfloor ak\log k\rfloor  \ge k$ and $A$ is a $k$-band matrix, $A_{21}$ is expressed as 
		\[
		A_{21}=\left[
		\begin{matrix} O_{\{\lfloor ak\log k\rfloor \times (q^*-\lfloor ak\log k\rfloor )\}} &A^*\\  O_{\{\lfloor bk\log k\rfloor \times (q^*-\lfloor ak\log k\rfloor )\}} & O_{(\lfloor bk\log k\rfloor \times \lfloor ak\log k\rfloor )}  \end{matrix}
		\right],
		\]
		for some $\lfloor ak\log k\rfloor \times \lfloor ak\log k\rfloor $-matrix $A^*$.  
		Likewise, $\{B_{\lfloor ak\log k\rfloor }(\Omega)\}_{12}$ is expressed as 
		\[
		\{B_{\lfloor ak\log k\rfloor }(\Omega)\}_{12}=\left[
		\begin{matrix} O_{\{(q^*-\lfloor ak\log k\rfloor )\times \lfloor ak\log k\rfloor  \}}  & O_{\{(q^*-\lfloor ak\log k\rfloor )\times \lfloor bk\log k\rfloor \}}  \\
			B^*_{(\lfloor ak\log k\rfloor  \times \lfloor ak\log k\rfloor )} & O_{(\lfloor ak\log k\rfloor \times \lfloor bk\log k\rfloor )}  \end{matrix}
		\right],
		\]
		for some $\lfloor ak\log k\rfloor \times \lfloor ak\log k\rfloor $-matrix $B^*$.
		Thus, 
		\bea
		A_{21} \{B_{\lfloor ak\log k\rfloor }(\Omega)\}_{12} = \left[
		\begin{matrix} A^* B^*  & O_{(\lfloor ak\log k\rfloor  \times \lfloor bk\log k\rfloor  )}  \\
			O_{(\lfloor bk\log k\rfloor  \times \lfloor ak\log k\rfloor )} & O_{(\lfloor bk\log k\rfloor \times \lfloor bk\log k\rfloor )}  \end{matrix}
		\right], 
		\eea
		and 
		$M_{\lfloor ak\log k\rfloor +1}^{(\lfloor bk\log k\rfloor )}(A_{22}^{-1}[A_{21} \{B_{\lfloor ak\log k\rfloor }(\Omega)\}_{12}])$ is the zero matrix, which gives the equality \eqref{formula:block2}.
		
		Next, we show the upper bound of 
		\bean
		|| M_{\lfloor ak\log k\rfloor +1}^{(\lfloor bk\log k\rfloor )}(A_{22}^{-1}[I-A_{21} \{B_{\lfloor ak\log k\rfloor }(\Omega)\}_{12}]) - M_{\lfloor ak\log k\rfloor +1}^{(\lfloor bk\log k\rfloor )}(\Omega_{22})
		|| \label{formula:block3} .
		\eean
		By the\ block matrix inversion formula, $\Omega_{22} = A_{22}^{-1} (I- A_{21}\Omega_{12}) $, we have
		\bea
		\eqref{formula:block3}&=& ||M_{\lfloor ak\log k\rfloor +1}^{(\lfloor bk\log k\rfloor )}(A_{22}^{-1}A_{21}[ \{B_{\lfloor ak\log k\rfloor }(\Omega)\}_{12} - \Omega_{12}])||\\   
		&\le& 
		||
		A_{22}^{-1}A_{21}||~ || \{B_{\lfloor ak\log k\rfloor }(\Omega)\}_{12} - \Omega_{12}
		||\nonumber\\
		&\le& 
		\lambda_{\min}(A_{22})^{-1}||A_{21}||~|| B_{\lfloor ak\log k\rfloor }(\Omega) - \Omega ||_1\nonumber\\
		&\le&||A|| ~ ||A^{-1}||~|| B_{\lfloor ak\log k\rfloor }(\Omega) - \Omega ||_1.
		\eea
		By Lemma \ref{lemma:bandinv}, there exist some positive constants $\lambda_1$ and $C_1$ depending only on $||A||$ and $||A^{-1}||$ such that
		$$|| B_{\lfloor ak\log k\rfloor }(\Omega) - \Omega ||_1 \le C_1 k^{-a\lambda_1+1},$$  
		for all sufficiently large $k$ with $p > k\vee (ak\log k)$.
		By combining this inequality with the equality \eqref{formula:block2}, 
		we get
		\bea
		||M_{\lfloor ak\log k\rfloor +1}^{(\lfloor bk\log k\rfloor )}(A_{22}^{-1}) -  M_{\lfloor ak\log k\rfloor +1}^{(\lfloor bk\log k\rfloor )}(\Omega_{22}) || \le C_1 ||A|| ~||A^{-1}|| k^{-a\lambda_1+1},
		\eea
		for all sufficiently large $k$. 	 
	\end{proof}

	Next, we prove Lemma \ref{main-lemma:trueerror}.
	\begin{proof}[Proof of Lemma \ref{main-lemma:trueerror}]
		There exist some positive constants $C_1$ and $C_2$ depending only on $M$, $M_0$ and $M_1$ such that
		\bea
		&&|| \psi(\Sigma_0)  - T_k(\Sigma_0)_{YX} T_k(\Sigma_{0,XX})^{-1}|| \\ &\le& 
		|| \Sigma_{0,XX}^{-1}|| ~|| T_k(\Sigma_0)_{YX} - \Sigma_{0,YX} || \\
		&&+ ||T_k(\Sigma_0)_{YX} ||~ || \Sigma_{0,XX}^{-1}-T_k(\Sigma_{0,XX})^{-1}||\\
		&\le& M_1^{-1} M(\lfloor k/2\rfloor)^{-\alpha} + 
		(||\Sigma_0|| + ||T_k(\Sigma_0) -\Sigma_0|| )
		|| \Sigma_{0,XX}^{-1}-T_k(\Sigma_{0,XX})^{-1}||\\
		&\le&M_1^{-1} M(\lfloor k/2\rfloor)^{-\alpha} + \{M_0 + M(\lfloor k/2\rfloor)^{-\alpha}\}C_1 (\lfloor k/2\rfloor)^{-\alpha}\\
		&\le& C_2 (\lfloor k/2\rfloor)^{-\alpha},
		\eea
		for all sufficiently large $k$. The third inequality holds by Lemma \ref{main-lemma:l1bandinv} since $\Sigma_{0,XX}\in\calF_{p_0,\alpha}(M,M_0,M_1)$.
		Then, we have 
		\bea
		&&   || \psi(\Sigma_0)  -  T_k(\Sigma_0)_{YX} \Lambda^{(0)}\{T_k(\Sigma_{0,XX});  2\lfloor ak\log k \rfloor\}|| \nonumber\\
		&\le& || \psi(\Sigma_0)  - T_k(\Sigma_0)_{YX} T_k(\Sigma_{0,XX})^{-1}|| \\
		&&+ || T_k(\Sigma_0)_{YX} [\Lambda^{(0)}\{T_k(\Sigma_{0,XX});  2\lfloor ak\log k \rfloor\}- T_k(\Sigma_{0,XX})^{-1}]|| \\
		&\le&   C_2 (\lfloor k/2\rfloor )^{-\alpha } + || T_k(\Sigma_0)_{YX} [\Lambda^{(0)}\{T_k(\Sigma_{0,XX});  2\lfloor ak\log k \rfloor\}- T_k(\Sigma_{0,XX})^{-1}]|| .
		\eea
		For this upper bound, we have
		\bean
		&& || T_k(\Sigma_0)_{YX} [\Lambda^{(0)}\{T_k(\Sigma_{0,XX});  2\lfloor ak\log k \rfloor\}- T_k(\Sigma_{0,XX})^{-1}]|| \nonumber\\
		&\le& || T_k(\Sigma_{0})_{YX} [  T_k(\Sigma_{0,XX})^{-1}  -M_{p_0-\lfloor ak\log k\rfloor+1}^{*(\lfloor ak\log k \rfloor)}\{T_k(\Sigma_{0,XX})^{-1}\}]||   \label{formula:trueerror3} \\
		&&+||T_k(\Sigma_{0})_{YX}[M_{p_0-\lfloor ak\log k\rfloor+1}^{*(\lfloor ak\log k\rfloor )}\{T_k(\Sigma_{0,XX})^{-1}\} -\Lambda^{(0)}\{T_k(\Sigma_{0,XX}); 2\lfloor ak\log k\rfloor \}]||,\label{formula:trueerror4}
		\eean
		
		First, we show the upper bound of \eqref{formula:trueerror3}.
		Since $T_k(\Sigma_0)$ is a $k$-band matrix, $T_k(\Sigma_0)_{YX} $ is expressed as 
		\bean
		T_k(\Sigma_{0})_{YX}=\left[
		\begin{matrix} O_{k\times (p_0-k)} &\{T_k(\Sigma_0)_{YX}\}_{1:k,(p_0-k+1):p_0}\\
			O_{(p-p_0-k)\times (p_0-k)} & O_{(p-p_0-k)\times k} \end{matrix}
		\right]. \label{formula:Tk21}
		\eean
		Since $\lfloor ak\log k\rfloor  \ge k$, the definition of $M_{p_0-\lfloor ak\log k\rfloor +1}^{*(\lfloor ak\log k\rfloor )}$ gives
		\bea
		&&T_k(\Sigma_{0,XX})^{-1}  -M_{p_0-\lfloor ak\log k\rfloor +1}^{*(\lfloor ak\log k\rfloor )}\{T_k(\Sigma_{0,XX})^{-1}\}\\
		&=&\left[ 
		\begin{matrix} B^* & C^*\\
			\{T_k(\Sigma_{0,XX})^{-1}\}_{ (p_0-k+1) : p_0, 1:(p_0-\lfloor ak\log k\rfloor ) } & O_{k\times \lfloor ak\log k\rfloor } \end{matrix}
		\right],
		\eea
		for some $(p_0-k)\times (p_0-\lfloor ak\log k\rfloor )$-matrix $B^*$ and some $(p_0-k)\times \lfloor ak\log k\rfloor $-matrix $C^*$.
		Thus, 
		\bea
		&&T_k(\Sigma_{0})_{YX}[T_k(\Sigma_{0,XX})^{-1}  -M_{p_0-\lfloor ak\log k\rfloor +1}^{*(\lfloor ak\log k\rfloor )}\{T_k(\Sigma_{0,XX})^{-1}\}]\\
		&=&\left[ 
		\begin{matrix} \{T_k(\Sigma_0)_{YX}\}_{1:k,(p_0-k+1):p_0} \{T_k(\Sigma_{0,XX})^{-1}\}_{ (p_0-k+1) : p_0, 1:(p_0-\lfloor ak\log k\rfloor ) } & O_{k\times \lfloor ak\log k\rfloor}\\
			O_{(p-p_0-k)\times (p_0-\lfloor ak\log k\rfloor )} & O_{(p-p_0-k)\times \lfloor ak\log k\rfloor } \end{matrix}
		\right],
		\eea
		and
		\bea
		\eqref{formula:trueerror3}
		&= & ||\{T_k(\Sigma_0)_{YX}\}_{1:k,(p_0-k+1):p_0}  \{T_k(\Sigma_{0,XX})^{-1}\}_{ (p_0-k+1) : p_0, 1:(p_0-\lfloor ak\log k\rfloor ) }||\\
		&\le&||T_k(\Sigma_0)_{YX} ||~ ||\{T_k(\Sigma_{0,XX})^{-1}\}_{ (p_0-k+1) : p_0, 1:(p_0-\lfloor ak\log k\rfloor ) }||.
		\eea
		Let $\{T_k(\Sigma_{0,XX})\}^{-1}=(b_{ij})_{1 \le i,j \le p_0}$. Every element $b_{ij}$ in $\{T_k(\Sigma_{0,11})^{-1}\}_{ (p_0-k+1) : p_0, 1:(p_0-\lfloor ak\log k\rfloor ) }$ satisfies 
		\bea
		|i-j|&\ge& \lfloor  ak\log k\rfloor -k+1 \\
		&>& \lfloor  ak\log k\rfloor/2,
		\eea 
		since $\lfloor  ak\log k\rfloor/2\ge k$.
		Thus, by Lemma \ref{lemma:bandinv}, there exist some positive constants $C_3$ and $\lambda_1$ depending only on $M_0$ and $M_1$ such that
		\bea
		&&||\{T_k(\Sigma_{0,XX})^{-1}\}_{ (p_0-k+1) : p_0, 1:(p_0-\lfloor ak\log k\rfloor ) }||_2\\
		&\le&
		||\{T_k(\Sigma_{0,XX})^{-1}\}_{ (p_0-k+1) : p_0, 1:(p_0-\lfloor ak\log k\rfloor ) } ||_1 \vee ||\{T_k(\Sigma_{0,XX})^{-1}\}_{ (p_0-k+1) : p_0, 1:(p_0-\lfloor ak\log k\rfloor ) } ||_\infty \\
		&\le& C_3 k^{-a\lambda_1 +1},
		\eea
		for all sufficiently large $k$ with $p_0>k \vee \lfloor ak\log k\rfloor /2$. Since 
		$$||T_k(\Sigma_0)_{YX} ||\le ||\Sigma_0|| + ||\Sigma_0-T_k(\Sigma_0)||\le    M_0 + M\lfloor k/2\rfloor,$$ we get
		\bean
		\eqref{formula:trueerror3} \le (M_0 + M\lfloor k/2\rfloor)C_2 k^{-a\lambda_1 +1},\label{formula:trueerror5}
		\eean
		for all sufficiently large $k$ with $p_0>k \vee \lfloor ak\log k\rfloor /2$.
		
		Next, we show the upper bound of \eqref{formula:trueerror4}.
		Since $T_k(\Sigma_0)_{YX}$ is expressed as \eqref{formula:Tk21}, we have
		\bean
		\eqref{formula:trueerror4}
		&\le& ||T_k(\Sigma_0)|| ~||[M_{p_0-\lfloor ak\log k\rfloor+1}^{*(\lfloor ak\log k\rfloor )}\{T_k(\Sigma_{0,XX})^{-1}\} -\Lambda^{(0)}\{T_k(\Sigma_{0,XX}); 2\lfloor ak\log k\rfloor \}]_{(p_0-k+1):p_0,1:p_0}||.\label{formula:trueerror7}
		\eean
		Note
		\bea
		&&[M_{p_0-\lfloor ak\log k\rfloor+1}^{*(\lfloor ak\log k\rfloor )}\{T_k(\Sigma_{0,XX})^{-1}\} -\Lambda^{(0)}\{T_k(\Sigma_{0,XX}); 2\lfloor ak\log k\rfloor \}]_{(p_0-k+1):p_0,1:p_0}\\
		&=&\left[ 
		\begin{matrix} 
			O_{ k\times (p_0-2 \lfloor ak\log k \rfloor)} & -D_1^* & D_3^*-D_2^*
		\end{matrix}
		\right],
		\eea
		where $D_1^*$, $D_2^*$ and $D_3^*$ are $k\times \lfloor ak\log k \rfloor$-matrices with 
		\bea
		D_1^* &=& [M_{p_0- 2\lfloor ak\log k \rfloor+1}^{( 2\lfloor ak\log k \rfloor)}\{T_k(\Sigma_{0,XX} )\}^{-1}]_{( 2\lfloor ak\log k \rfloor-k+1): 2\lfloor ak\log k \rfloor,1:\lfloor ak\log k \rfloor}\\
		D_2^* &=& [M_{p_0- 2\lfloor ak\log k \rfloor+1}^{( 2\lfloor ak\log k \rfloor)}\{T_k(\Sigma_{0,XX} )\}^{-1}]_{( 2\lfloor ak\log k \rfloor-k+1): 2\lfloor ak\log k \rfloor,(\lfloor ak\log k \rfloor+1): 2\lfloor ak\log k \rfloor}\\
		D_3^* &=& \{T_k(\Sigma_{0,XX})^{-1}\}_{(p_0-k+1) : p_0,(p_0-\lfloor ak\log k \rfloor+1):p_0 }.
		\eea
		We have
		\bea
		\eqref{formula:trueerror4}
		&\le& ||T_k(\Sigma_0)|| ( ||D_1^*|| + ||D_2^* - D_3^*||  ).
		\eea
		First, we show the upper bound of $||D_2^* - D_3^*||$. 
		Let 
		\bea
		A_{22}&=& M_{p_0- 2\lfloor ak\log k \rfloor+1}^{( 2\lfloor ak\log k \rfloor)}\{T_k(\Sigma_{0,XX} )\}\\
		\Omega_{22} &=& M_{p_0- 2\lfloor ak\log k \rfloor+1}^{( 2\lfloor ak\log k \rfloor)}\{T_k(\Sigma_{0,XX} )^{-1}\}.
		\eea
		By Lemma \ref{lemma:bandblockinv}, there exist some positive constants $C_4$ and $\lambda_2$ depending only on $M_0$ and $M_1$ such that
		\bea
		||D_2^* - D_3^*|| &\le&  ||  M_{\lfloor ak\log k \rfloor+1}^{(\lfloor ak\log k\rfloor )}(\Omega_{22}) -M_{\lfloor ak\log k \rfloor+1}^{(\lfloor ak\log k \rfloor)} (A_{22}^{-1})   ||   \\
		&\le& C_3 k^{-a\lambda_2 +1},
		\eea
		for all sufficiently large $k$ with $\lfloor ak\log k\rfloor \ge k$ and $p_0 >2 \lfloor ak\log k\rfloor$.
		
		Next, we show the upper bound of $||D_1^*||$.
		Let $M_{p_0- 2\lfloor ak\log k \rfloor+1}^{( 2\lfloor ak\log k \rfloor)}\{T_k(\Sigma_{0,XX} )\}^{-1}=(c_{ij})_{1\le i,j\le 2\lfloor ak\log k \rfloor}$. Every element $c_{ij}$ in $D_1^*$ satisfies 
		\bea
		|i-j|&\ge& \lfloor  ak\log k\rfloor -k+1 \\
		&\ge& \lfloor  ak\log k\rfloor/2 ,
		\eea 
		since $\lfloor  ak\log k\rfloor/2\ge k$.
		Thus, Lemma \ref{lemma:bandinv} gives that there exist some positive constants $C_5$ and $\lambda_3$ depending only on $M_0$ and $M_1$ such that
		\bea
		||D_1^* || \le C_5 k^{-a\lambda_3+1},
		\eea
		for all sufficiently large $k$ with $p_0 > k\vee (ak\log k)$. By combining this inequality with \eqref{formula:trueerror7}, we have     
		\bean
		\eqref{formula:trueerror4} \le (M_0 + M(\lfloor k/2 \rfloor)^{-\alpha}) (  C_2 k^{-a\lambda_2 +1}  + C_3 k^{-a\lambda_3+1}) \label{formula:trueerror6}.  
		\eean
		By collecting the inequalities \eqref{formula:trueerror5} and \eqref{formula:trueerror6}, 
		the proof is completed.	 
	\end{proof}

	\section{Proofs of Theorems 6 and 7}\label{sec:thm6}
	
	In this section, we prove Theorems \ref{main-theorem:TPPPupper} and \ref{main-theorem:PPPupper} which give the P-risk convergence rates of the tapering and blockwise tapering post-processed posteriors, respectively. 
	First, we present Lemmas \ref{lemma:upper2}-\ref{lemma:invfactor} necessary for the proofs of Theorems \ref{main-theorem:TPPPupper} and \ref{main-theorem:PPPupper}. %These lemmas give the properties of inverse-Wishart distribution regardless of the bandable assuption. 

	\begin{lemma}\label{lemma:upper2}
		Let $k$ and $p$ be positive integers with $k\le p$.
		Suppose $\Sigma_0\in\calF_{p,\alpha}(M,M_0,M_1)$, and let the prior $\pi^i$ of $\Sigma$ be $IW_p(A_n,\nu_n)$ for $A_n\in\calC_p$ and $\nu_n>2p$.
		If $k \vee ||A_n|| \vee (\nu_n-2p)\vee\log p=o(n) $ and $\epsilon_n=O(1)$, then for all sufficiently large $n$,
		$E_{\Sigma_{0}}\{E^{\pi^i}(||T_k^{(\epsilon_n)}(\Sigma)-\Sigma_0||^4\mid\bbZ_n)\}$ is bounded above by some positive constant depending only on $M$, $M_0$, $M_1$ and $\alpha$.
		
	\end{lemma}
	The proof of this lemma is given in Section \ref{sec:proofmis}.

	\begin{lemma}\label{lemma:evmin}
		Suppose the same setting of Lemma \ref{lemma:upper2}.
		If $c \le \lambda_{\min}(\Sigma_0)/2$, $\lfloor k/2\rfloor> \{4M/\lambda_{\min}(\Sigma_0)\}^{1/\alpha}$ and $k \vee ||A_n|| \vee (\nu_n-2p)\vee\log p=o(n) $, then there exist some positive constants $C$ and $\lambda$ depending only on $M_0$ and $M_1$ such that
		\bea
		E_{\Sigma_0}(P^{\pi^i}[\lambda_{\min}\{T_k(\Sigma)\} \le c  \mid \bbZ_n  ])\le 
		C p 5^k \exp(-\lambda n) ,
		\eea
		for all sufficiently large $n$.
	\end{lemma}
	The proof of this lemma is given in Section \ref{sec:proofmis}.
	
	\begin{lemma}\label{lemma:TPPPupper}
		Suppose the same setting of Lemma \ref{lemma:upper2}.
		If $||A_n||\vee(\nu_n-2p)\vee k\vee\log p=o(n)$ and $\epsilon_n=O(1)$, then there exist some positive constant $C$ depending only on $M$, $M_0$, $M_1$ and $\alpha$ such that  
		\bea
		E_{\Sigma_0}\{ E^{\pi^i} ( ||\Sigma_0 -  T_k^{(\epsilon_n)}(\Sigma)||^2\mid\bbZ_n)\} 
		&\le& C\Big( k^{-2\alpha } + \frac{k+\log p }{n} +\epsilon_n^2  \Big),
		\eea
		for sufficiently large $n$. 
	\end{lemma}
	The proof of this lemma is given in Section \ref{sec:proofmis}.

	\begin{lemma}\label{lemma:invfactor}
		Suppose the same setting of Lemma \ref{lemma:upper2}. Let $q$ and $k$ be positive constants with $k<q\le p$, and let $\{T_k^{(\epsilon_n)}(\Sigma)\}_{1:q,1:q}$ and $(\Sigma_0)_{1:q,1:q}$ be denoted by $T_k^{(\epsilon_n)}(\Sigma)_{11}$ and $\Sigma_{0,11}$, respectively.
		If $\lfloor k/2 \rfloor > \{4M/\lambda_{\min}(\Sigma_0)\}^{1/\alpha}$ and $k \vee ||A_n|| \vee (\nu_n-2p)\vee\log p=o(n) $, then there exist positive constants $C$ and $\lambda$ depending only on $M$, $M_0$, $M_1$ and $\alpha$ such that 
		\bea
		E_{\Sigma_0}\{ E^{\pi^i}(||\Sigma_{0,11}^{-1} - T_k^{(\epsilon_n)}(\Sigma)_{11}^{-1}||^2\mid \bbZ_n)\}\le
		C\Big\{  \frac{k+\log p}{n} + k^{-2\alpha} + \epsilon_n^2 +\frac{p^{1/2} 5^{k/2} \exp(-\lambda n)}{\epsilon_n^2}  \Big\},
		\eea
		for all sufficiently large $n$.
	\end{lemma}
	The proof of this lemma is given in Section \ref{sec:proofmis}.

	Now we prove Theorems \ref{main-theorem:TPPPupper} and \ref{main-theorem:PPPupper}.
	
	\begin{proof}[Proof of Theorem \ref{main-theorem:TPPPupper}]
		
		Since $\lambda_{\min}\{T_k^{(\epsilon_n)}(\Sigma)_{XX}^{-1}\}\ge\lambda_{\min}\{T_k^{(\epsilon_n)}(\Sigma)^{-1}\}\ge \epsilon_n$, we have 
		\bean
		&&E_{\Sigma_0}\{ E^{\pi^i}(|| \psi(\Sigma_0)  -  \psi\{T_k^{(\epsilon_n)}(\Sigma)\} ||^2\mid\bbZ_n)\}\nonumber\\
		&=& E_{\Sigma_0}\{ E^{\pi^i}(||\Sigma_{0,YX} \Sigma_{0,XX}^{-1} - T_k^{(\epsilon_n)}(\Sigma)_{YX} T_k^{(\epsilon_n)}(\Sigma)_{XX}^{-1}||^2\mid\bbZ_n)\}\nonumber\\
		&\le& 2 E_{\Sigma_0}\{ E^{\pi^i}( ||T_k^{(\epsilon_n)}(\Sigma)_{YX}-\Sigma_{0,YX}||^2||T_k^{(\epsilon_n)}(\Sigma)_{XX}^{-1}||^2\mid\bbZ_n)\}\nonumber\\
		&&+2||\Sigma_{0,YX}||^2 E_{\Sigma_0}\{ E^{\pi^i}(  ||   T_k^{(\epsilon_n)}(\Sigma)_{XX}^{-1}- \Sigma_{0,XX}^{-1}||^2\mid\bbZ_n)\} \nonumber\\
		&\le& \frac{2^3}{M_1^2} E_{\Sigma_0}\{ E^{\pi^i}( ||T_k^{(\epsilon_n)}(\Sigma)_{YX}-\Sigma_{0,YX}||^2\mid\bbZ_n)\}\label{formula:thmTPPPupper1}\\
		&&+ \frac{2}{\epsilon_n^2}E_{\Sigma_0}\{ E^{\pi^i}(||T_k^{(\epsilon_n)}(\Sigma)_{YX}-\Sigma_{0,YX}||^2 I[\lambda_{\min}\{T_k(S_n)\}\le M_1/2]\mid\bbZ_n)\}\label{formula:thmTPPPupper2}\\
		&&+2||\Sigma_{0,YX}||^2 E_{\Sigma_0}\{ E^{\pi^i}(  ||   T_k^{(\epsilon_n)}(\Sigma)_{XX}^{-1}- \Sigma_{0,XX}^{-1}||^2\mid\bbZ_n)\}. \label{formula:thmTPPPupper3}
		\eean
		Lemmas \ref{lemma:TPPPupper} and \ref{lemma:invfactor} give that there exist some positive constants $C_1$ and $\lambda_1$ depending only on $M$, $M_0$, $M_1$ and $\alpha$ such that 
		\bea
		\eqref{formula:thmTPPPupper1} + \eqref{formula:thmTPPPupper3} \le C_1 \Big\{  \frac{k+\log p}{n} + k^{-2\alpha} + \epsilon_n^2 +\frac{p^{1/2} 5^{k/2} \exp(-\lambda_1 n)}{\epsilon_n^2}  \Big\},    
		\eea
		for all sufficiently large $n$. 
		Next we show the upper bound of \eqref{formula:thmTPPPupper2}. By Lemmas \ref{lemma:upper2} and \ref{lemma:evmin}, there exist positive constants $C_2$ and $\lambda_2$ depending only on $M$, $M_0$, $M_1$ and $\alpha$ such that 
		\bea
		\eqref{formula:thmTPPPupper2}
		&\le& \frac{2}{\epsilon_n^2}E_{\Sigma_0}\{ E^{\pi^i}(||T_k^{(\epsilon_n)}(\Sigma)_{YX}-\Sigma_{0,YX}||^4\mid\bbZ_n)\}^{1/2}\\
		&&\times  E_{\Sigma_0}\{ P^{\pi^i}(I[\lambda_{\min}\{T_k(S_n)\}\le M_1/2]\mid\bbZ_n)\}^{1/2}\\
		&\le& C_2  \frac{p^{1/2} 5^{k/2}}{\epsilon_n^2} \exp(-\lambda_2 n),
		\eea
		for all sufficiently large $n$.
		Collecting the upper bounds of \eqref{formula:thmTPPPupper1}, \eqref{formula:thmTPPPupper2} and \eqref{formula:thmTPPPupper3}, we complete the proof.	 
		
	\end{proof}

	\begin{proof}[Proof of Theorem~\ref{main-theorem:PPPupper}]
		
		We have
		\bean
		&&E_{\Sigma_0}\{E^{\pi^i}(|| \psi(\Sigma_0)  - T_k(\Sigma)_{YX} \Lambda^{(\epsilon_n)}\{ T_k(\Sigma)_{XX}; 2\lfloor ak\log k \rfloor\}||^2\mid\bbZ_n)\} \nonumber\\
		&\le& 2||\psi(\Sigma_0) - T_k(\Sigma_0)_{YX}\Lambda^{(0)}\{T_k(\Sigma_{0,XX});2\lfloor ak\log k \rfloor\}||^2 \label{formula:PPPupper1}\\
		&&+ 4E_{\Sigma_0}\{E^{\pi^i}(||\{T_k(\Sigma_{0})_{YX}- T_k(\Sigma)_{YX}\} \Lambda^{(\epsilon_n)}\{ T_k(\Sigma)_{XX};2\lfloor ak\log k \rfloor\} ||^2\mid\bbZ_n)\}\label{formula:PPPupper2}\\
		&&+ 4|| T_k(\Sigma_0)_{YX}||^2E_{\Sigma_0}\{E^{\pi^i}( \Lambda^{(0)}\{T_k(\Sigma_{0,XX});2\lfloor ak\log k \rfloor\}\nonumber \\
		&&-\Lambda^{(\epsilon_n)}\{ T_k(\Sigma)_{XX};2\lfloor ak\log k \rfloor\}||^2\mid\bbZ_n)\}.\label{formula:PPPupper3}
		\eean
		Lemma \ref{main-lemma:trueerror} gives that there exist positive constants $C_1$ and $\lambda_1$ depending only on $M$, $M_0$, $M_1$ and $\alpha$ such that
		\bea
		\eqref{formula:PPPupper1}  \le C_1 k^{-2\{\alpha\wedge (a\lambda_1-1)\}}
		\eea
		for all sufficiently large $k$ with $\lfloor ak\log k\rfloor /2 >k$ and $p_0> 2\lfloor ak\log k\rfloor $.
		Let $\tilde{M} = M_{p_0-2\lfloor ak\log k \rfloor+1}^{(2\lfloor ak\log k \rfloor)}$ in this proof. By the definition of $\Lambda^{(\epsilon_n)}$, we have  
		\bea
		|| \Lambda^{(\epsilon_n)}\{T_k(\Sigma_{XX});2\lfloor ak\log k \rfloor\}|| = || T_k^{(\epsilon_n)}\{\tilde{M} ( \Sigma_{XX})\}^{-1}||.
		\eea
		Thus, there exist  positive constants $C_2$ and $\lambda_2$ depending only on $M$, $M_0$, $M_1$ and $\alpha$ such that
		\bea
		\eqref{formula:PPPupper2}
		&\le& 4E_{\Sigma_0}\{E^{\pi^i}(||T_k(\Sigma_{0})_{YX}- T_k(\Sigma)_{YX}||^2 || T_k^{(\epsilon_n)}\{\tilde{M} ( \Sigma_{XX})\}^{-1}||^2\mid\bbZ_n)\}   \\
		&\le& \frac{4}{\epsilon_n^2}E_{\Sigma_0}\{E^{\pi^i}(||T_k(\Sigma_{0})_{YX}- T_k(\Sigma)_{YX} ||^2 I[\lambda_{\min}\{\tilde{M}(\Sigma_{XX})\}<M_1/2]\mid\bbZ_n)\}\\
		&&+ \frac{16}{M_1^2} E_{\Sigma_0}\{E^{\pi^i}(||T_k(\Sigma_{0})_{YX}- T_k(\Sigma)_{YX} ||^2\mid\bbZ_n)\} \\
		&\le& \frac{4}{\epsilon_n^2} E_{\Sigma_0}\{E^{\pi^i}(||T_k(\Sigma_{0})- T_k(\Sigma) ||^4\mid\bbZ_n)\}^{1/2} E_{\Sigma_0}[P^{\pi^i}\{\lambda_{\min}(\Sigma)<M_1/2\mid\bbZ_n\}]^{1/2}\\
		&&+ \frac{16}{M_1^2} E_{\Sigma_0}\{E^{\pi^i}(||T_k(\Sigma_{0})_{YX}- T_k(\Sigma)_{YX} ||^2\mid\bbZ_n)\} \\
		&\le&  C_2\Big(\frac{1}{\epsilon_n^2} p^{1/2}5^{k/2} \exp(-\lambda_2n) + \frac{k+\log p}{n} + k^{-2\alpha}\Big),
		\eea    
		for all sufficiently large $n$. The third inequality holds since $\lambda_{\min}\{\tilde{M}(\Sigma_{XX})\} \ge \lambda_{\min}(\Sigma)$. The last inequality holds by Lemma \ref{lemma:upper2}, \ref{lemma:evmin} and \ref{lemma:TPPPupper}.
		For the upper bound of \eqref{formula:PPPupper3}, we have
		\bean
		&&E_{\Sigma_0}\{E^{\pi^i}( \Lambda^{(0)}\{T_k(\Sigma_{0,XX});2\lfloor ak\log k \rfloor\}-\Lambda^{(\epsilon_n)}\{ T_k(\Sigma)_{XX};2\lfloor ak\log k \rfloor\}||^2\mid\bbZ_n)\}\nonumber\\
		&=&E_{\Sigma_0}\{E^{\pi^i}(||T_k\{\tilde{M}(\Sigma_{0,XX}) \}^{-1} -T_k^{(\epsilon_n)}\{\tilde{M}(\Sigma_{XX})\}^{-1}
		||^2\mid\bbZ_n)\} \nonumber\\
		&\le& 2E_{\Sigma_0}\{E^{\pi^i}(||\tilde{M}(\Sigma_{0,XX} )^{-1} - T_k^{(\epsilon_n)}\{\tilde{M}(\Sigma_{XX})\}^{-1}||^2\mid\bbZ_n)\} \label{formula:thmupperPPP1}\\
		&&+ 2||\tilde{M}(\Sigma_{0,XX} )^{-1} -T_k\{\tilde{M}(\Sigma_{0,XX}) \}^{-1}||^2\label{formula:thmupperPPP2}
		\eean
		where the first equality holds by the definition of $\Lambda^{(0)}$ and $\Lambda^{(\epsilon_n)}$. Since $\tilde{M}(\Sigma_{0,XX})\in\calF_{2\lfloor ak\log k\rfloor,\alpha}(M,M_0,M_1)$, Lemma \ref{main-lemma:l1bandinv} gives that there exists some positive constant $C_3$ depending only on $M$, $M_0$ and $M_1$ such that 
		\bea
		\eqref{formula:thmupperPPP2} \le C_3 (\lfloor k/2\rfloor)^{-2\alpha} 
		\eea
		for all sufficiently large $k$. 
		Lemma \ref{lemma:invfactor} gives that there exist some positive constants $C_4$ and $\lambda_3$ depending only on $M$, $M_0$, $M_1$ and $\alpha$ such that 
		\bea
		\eqref{formula:thmupperPPP1}
		&\le& 	C_4 \Big\{  \frac{k+\log p}{n} + k^{-2\alpha} + \epsilon_n^2 +\frac{p^{1/2} 5^{k/2} \exp(-\lambda_3 n)}{\epsilon_n^2}  \Big\},
		\eea
		for all sufficiently large $n$. Thus, there exists some positive constant $C_5$ depending only on $M$, $M_0$, $M_1$ and $\alpha$ such that 
		\bea
		\eqref{formula:PPPupper3}\le C_5 \Big\{ \frac{k+\log p}{n} + k^{-2\alpha} + \epsilon_n^2 +\frac{p^{1/2} 5^{k/2} \exp(-\lambda_3 n)}{\epsilon_n^2} \Big\},
		\eea
		for all sufficiently large $n$ and $k$. 
		Collecting the upper bounds of \eqref{formula:PPPupper1}, \eqref{formula:PPPupper2} and \eqref{formula:PPPupper3}, we complete the proof.

	\end{proof}
	
	\section{Proofs of lemmas in Appendix A}
	
	In this section, we prove Lemmas \ref{main-lemma:l1bandinv}-\ref{main-lemma:ineqTV} which are presented in Appendix A. For the proofs we present Lemmas \ref{lemma:taper} and \ref{lemma:momentscov}.
	
	\begin{lemma}\label{lemma:taper}
		Let $p$ and $k$ be positive integers with $k \le p$.
		For an arbitrary covariance matrix $\Sigma\in\calC_p$, 
		\bea
		||T_k(\Sigma) ||_r \le 3 \max_{1 \le l\le p} || M_l^{(k)}(\Sigma)||_r,
		\eea
		where $||\cdot||_r$ is the matrix $r$-norm.
	\end{lemma}
	
	\begin{proof}
		
		\cite{cai2010optimal} in Lemma 1 shows
		\bea
		T_k(\Sigma) = (k/2)^{-1} \{ S^{*(k)}(\Sigma) -S^{*(k/2)}(\Sigma) \},
		\eea
		where $S^{*(k)}(\Sigma) = \sum_{l=1-k}^p M_l^{*(k)}(\Sigma)$.
		Thus, we have
		\bean
		||T_k(\Sigma)||_r \le (2/k) (|| S^{*(k)}(\Sigma) ||_r + ||S^{*(k/2)}(\Sigma)||_r).\label{formula:taper1}
		\eean
		Note that 
		\bea
		S^{*(k)}(\Sigma)  = \sum_{l=1}^k \sum_{-1\le j \le \lfloor p/k\rfloor} M_{jk+l}^{*(k)}(\Sigma) .
		\eea
		We have 
		\bea
		\Big|\Big|\sum_{-1\le j \le \lfloor p/k\rfloor} M_{jk+l}^{*(k)}(\Sigma) \Big|\Big|_r = \max_{-1\le j \le \lfloor p/k\rfloor}  ||M_{jk+l}^{*(k)}(\Sigma) ||_r,
		\eea
		for $l\in \{1,2,\ldots,k\}$, since the sum of matrices is a block diagonal matrix. 
		Thus, we get    	
		\bean
		||S^{*(k)}(\Sigma)||_r &\le& \sum_{l=1}^k \Big|\Big|\sum_{-1\le j \le \lfloor p/k\rfloor} M_{jk+l}^{*(k)}(\Sigma) \Big|\Big|_r \nonumber\\
		&\le& k\max_{1\le l \le k} \Big|\Big|\sum_{-1\le j \le \lfloor p/k\rfloor} M_{jk+l}^{*(k)}(\Sigma) \Big|\Big|_r \nonumber\\
		&\le& k\max_{1-k\le l \le p} ||M_l^{*(k)}(\Sigma)||_r,\nonumber\\
		&=& k\max_{1\le l \le p} ||M_l^{*(k)}(\Sigma)||_r,\label{formula:taper2}
		\eean
		and 
		\bea
		||S^{*(k/2)} (\Sigma)||_r&\le& (k/2)\max_{1\le l \le p}||M_l^{*(k/2)}  (\Sigma)||_r.
		\eea
		Since $
		||M_l^{*(k/2)} (\Sigma)||_r \le ||M_{l}^{*(k)} (\Sigma)||_r$ for $l\in \{1,2,\ldots,p\}$, we have 
		\bean
		||S^{*(k/2)} (\Sigma)||_r &\le&(k/2)\max_{1-k\le l \le p}||M_l^{*(k)}(\Sigma)||_r .\label{formula:taper3}
		\eean
		Collecting \eqref{formula:taper1}, \eqref{formula:taper2} and  \eqref{formula:taper3}, we get
		\bea
		||T_k(\Sigma)||_r \le 3\max_{1\le l \le p }||M_l^{(k)} (\Sigma) ||_r. ~	 
		\eea
	\end{proof}

	\begin{lemma}\label{lemma:momentscov}
		Let $k$ and $r$ be positive integers, and $\tau$ and $M_0$ be positive real numbers with $\tau\ge k-1$.
		Suppose $\Sigma_0 \in \calC_k$ and $T \sim W_k(\Sigma_0,\tau)$. If $\lambda_{\max}(\Sigma_0) \le M_0$, then there exists some positive constant $C$ depending only on $M_0$ and $r$ such that
		\bea
		E(||T/\tau-\Sigma_0||_2^r) \le C\frac{5^k}{\tau^{r/2}}.
		\eea
		
	\end{lemma}
	
	\begin{proof}
		
		Let $\Omega_{\tau}^{(k)}= \Sigma_0^{-1/2}T\Sigma_0^{-1/2}/\tau$. We have 
		\bea
		E(||T/\tau-\Sigma_0||_2^r) &\le&
		||\Sigma_0||_2^r E(||\Omega_{\tau}^{(k)}- I_k||_2^r).
		\eea
		Since $||\Sigma_0||_2^r$ is bounded by $M_0^r$, it suffices to show the upper bound of $E(||\Omega_{\tau}^{(k)}- I_k||_2^r) $.
		By a property of Wishart distribution, we have
		\bea
		\Omega_{\tau}^{(k)}=\Sigma_0^{-1/2}T\Sigma_0^{-1/2}/\tau \sim W_k(I_k/\tau,\tau),
		\eea
		and 
		\bea
		E(||\Omega_{\tau}^{(k)} - I_k||^r ) &\le& \int_0^1 x^{r-1} P(||\Omega_{\tau}^{(k)} - I_k||>x) dx + \int_1^\infty x^{r-1}P(||\Omega_{\tau}^{(k)} - I_k||>x) dx\\
		&\le& 2\times 5^k \Big\{\int_0^1 x^{r-1}\exp(-\tau x^2/2^7)dx + \int_1^\infty x^{r-1}\exp(-\tau x/2^7)dx\Big\},
		\eea
		where the last inequality is satisfied by Lemma 4.4 in \cite{lee2020post}.
		Integration by substitution and the definition of Gamma function give the inequalities 
		\bea
		\int_0^1 x^{r-1}\exp(-\tau x^2/2^7)dx &\le& 
		\frac{2^{7r/2-1}}{\tau ^{r/2}}\int_0^\infty t^{r/2-1} \exp(-t) dt \\
		&=& \frac{2^{7r/2-1}}{\tau^{r/2}} \Gamma(r/2)\\
		\int_1^\infty x^{r-1}\exp(-\tau x/2^7)dx &\le& \Big(\frac{2^7}{\tau }\Big)^r\int_0^\infty t^{r-1}\exp(-t)dx\\
		&=& \Big(\frac{2^7}{\tau }\Big)^r\Gamma(r).
		\eea
		Thus, 
		\bea
		E(||\Omega_{\tau }^{(k)} - I_k||^r ) &\le& C_1\frac{5^k}{\tau^{r/2}}, 
		\eea
		for some positive constant $C_1$ depending only on $r$. 	 
		
	\end{proof}
	
	Next, we prove Lemmas \ref{main-lemma:l1bandinv}-\ref{main-lemma:ineqTV}. 
	
	\begin{proof}[Proof of Lemma \ref{main-lemma:l1bandinv}]
		By Thorem 2.3.4 of \cite{golub2013matrix}, we have
		\bean
		||\Sigma_0^{-1} - B_k(\Sigma_0)^{-1} ||_1 &\le& 
		\frac{||B_k(\Sigma_0)^{-1} ||_1^2 || \Sigma_0-B_k(\Sigma_0)||_1}{1-||B_k(\Sigma_0)^{-1} \{\Sigma_0-B_k(\Sigma_0)\}  ||_1},\label{formula:l1bandinv1}
		\eean
		when $||B_k(\Sigma_0)^{-1} \{\Sigma_0-B_k(\Sigma_0)\}  ||_1<1$. This inequality condition holds for all sufficiently large $k$, since $||B_k(\Sigma_0)^{-1}||_1$ is bounded above by a constant and $||\Sigma_0-B_k(\Sigma_0)||_1\le Mk^{-\alpha}$.
		We show the boundedness of $||B_k(\Sigma_0)^{-1}||_1$ below.
		Note that $B_k(\Sigma_0) \in \calC_p$ for all sufficiently large $k$ since $\lambda_{\min}\{B_k(\Sigma_0)\} \ge \lambda_{\min}(\Sigma_0)- ||B_k(\Sigma_0)- \Sigma_0||$ \citep[Lemma 4.14]{lee2020post}.
		Thus, Theorem 2.4 of \cite{demko1984decay} gives 
		\bean\label{formula:l1bandinv2}
		||B_k(\Sigma_0)^{-1} ||_1 &\le& \max_j \sum_{i=1}^p | w_{ij} | \nonumber\\
		&\le& C_1\sup_j \sum_{i=1}^p (q_1^{2/k})^{|i-j|} \nonumber\\
		&\le&  \frac{2C_1}{1-q_1^{2/k}},
		\eean
		where $B_k(\Sigma_0)=(w_{ij})$, $C_1 = \{||B_k(\Sigma_0)^{-1}|| \vee (1+\kappa_1^{1/2})^2\}/(2||B_k(\Sigma_0)||)$, $q_1=(\kappa_1^{1/2}-1)/(\kappa_1^{1/2}+1)$ and $\kappa_1$ is the spectral condition number of $B_k(\Sigma_0)$. 
		To show that \eqref{formula:l1bandinv2} is bounded, it suffices to show that $||B_k(\Sigma_0)^{-1}||_2$ is bounded.
		By Lemma 4.14 in \cite{lee2020post}, we have
		\bean\label{formula:l1bandinv3}
		||B_k(\Sigma_0)^{-1}||_2 &=& \lambda_{\min}\{B_k(\Sigma_0)\}^{-1} \nonumber\\
		&\le& \frac{1}{\lambda_{\min}(\Sigma_0) -  ||B_k(\Sigma_0)-\Sigma_0||}\nonumber\\
		&\le& \frac{1}{\lambda_{\min}(\Sigma_0) -  Mk^{-\alpha}},
		\eean
		which is bounded for all sufficiently large $k$. Thus, for all sufficiently large $k$, $||B_k(\Sigma_0)^{-1} \{\Sigma_0-B_k(\Sigma_0)\}  ||_1<1$ and the inequality \eqref{formula:l1bandinv1} is satisfied. 
		By combining \eqref{formula:l1bandinv1} and \eqref{formula:l1bandinv2}, we have that there exists some positive constant $C_2$ depending only on $M$, $M_0$ and $M_1$ such that
		\bea
		||\Sigma_0^{-1} - B_k(\Sigma_0)^{-1} ||_1 &\le&  \Big( \frac{2C_1 }{1-q_1^{2/k}} \Big)^2 \frac{ Mk^{-\alpha} }{ 1-2C_1 Mk^{-\alpha} / (1-q_1^{2/k}) }\\
		&\le& C_2M k^{-\alpha} ,
		\eea
		for all sufficiently large $k$.
		
		Next, we consider the upper bound of
		$||\Sigma_0^{-1} - T_k(\Sigma_0)^{-1} ||_1 $.
		Note that $T_k(\Sigma_0)$ is a $k$-band matrix and 
		\bean\label{formula:l1bandinv4}
		||T_k(\Sigma_0)-\Sigma_0||_1 &\le& ||B_{\lfloor k/2\rfloor }(\Sigma_0)-\Sigma_0 ||_1\nonumber\\
		&\le& M(\lfloor k/2\rfloor )^{-\alpha}.
		\eean
		In the same way as deriving the inequalties \eqref{formula:l1bandinv2} and \eqref{formula:l1bandinv3}, we have
		\bean
		||T_k(\Sigma_0)^{-1}||_1 &\le& \frac{2C_3}{1-q_2^{2/k}}\label{formula:l1bandinv5}\\
		||T_k(\Sigma_0)^{-1}||_2 &\le& \frac{1}{\lambda_{\min}(\Sigma_0) - M(\lfloor k/2\rfloor )^{-\alpha}},\nonumber
		\eean
		where $C_3 = ||T_k(\Sigma_0)^{-1}|| \vee (1+\kappa_2^{1/2})^2/(2||T_k(\Sigma_0)||)$, $q_2=(\kappa_2^{1/2}-1)/(\kappa_2^{1/2}+1)$ and $\kappa_2$ is the spectral condition number of $T_k(\Sigma_0)$. 
		Thus, we have $||T_k(\Sigma_0)^{-1}(\Sigma_{0}-T_k(\Sigma_{0}))||_1<1$ for all sufficiently large $k$, and apply Theorem 2.3.4 of \cite{golub2013matrix}.
		By Theorem 2.3.4 of \cite{golub2013matrix}, there exists some positive constant $C_4$ depending only on $M$, $M_0$ and $M_1$ such that 
		\bea
		||\Sigma_0^{-1} - T_k(\Sigma_0)^{-1} ||_1  &\le& 	\frac{||T_k(\Sigma_0)^{-1} ||_1^2 || \Sigma_0-T_k(\Sigma_0)||_1}{1-||T_k(\Sigma_0)^{-1} (\Sigma_0-T_k(\Sigma_0))  ||_1}\\
		&\le& C_4  (\lfloor k/2\rfloor )^{-\alpha},
		\eea
		for all sufficiently large $k$. The last inequality holds by the inequalities \eqref{formula:l1bandinv4} and \eqref{formula:l1bandinv5}.

	\end{proof}

	\begin{proof}[Proof of Lemma \ref{main-lemma:evminfreq}]
		Two inequalities in Lemma 4.14 of \cite{lee2020post},
		\bea
		\lambda_{\min}\{T_k(S_n)\} &\ge& \lambda_{\min}\{T_k(\Sigma_0)\} -||T_k(S_n)-T_k(\Sigma_0)||\\
		\lambda_{\min}\{T_k(\Sigma_0)\} &\ge& \lambda_{\min}(\Sigma_0) -||\Sigma_0-T_k(\Sigma_0)||,
		\eea
		imply
		\bean
		I[\lambda_{\min}\{T_k(S_n)\}\le c]
		&\le&I\{\lambda_{\min}(\Sigma_0)- ||T_k(\Sigma_0)-\Sigma_0||- ||T_k(S_n)-T_k(\Sigma_0)|| \le c\}\nonumber\\
		&\le& I( ||T_k(S_n)-T_k(\Sigma_0)|| \ge c_k), \label{formula:evminfreq1}
		\eean
		where $c_k = \lambda_{\min}(\Sigma_0) -M(\lfloor k/2\rfloor )^{-\alpha}  -c $. The last inequality holds by the inequality \eqref{formula:l1bandinv4}.
		Since $c\le \lambda_{\min}(\Sigma_0)/2$ and $\lfloor k/2 \rfloor> \{4M/\lambda_{\min}(\Sigma_0)\}^{1/\alpha}$, we have $c_k \ge \lambda_{\min}(\Sigma_0)/4$.
		By Lemma \ref{lemma:taper}, we have   
		\bea
		P( ||T_k(S_n)-T_k(\Sigma_0)|| \ge c_k) &\le&
		P(\max_{1\le l\le p} ||M_l^{(k)}(S_n-\Sigma_0) ||  \ge c_k/3)\\
		&\le& P(||\Sigma_0|| \max_{1\le l\le p} ||\Omega_{n,l}^{*(k)}-I_k ||  \ge c_k/3)\\
		&\le& P(M_0 \max_{1\le l\le p} ||\Omega_{n,l}^{*(k)}-I_k ||  \ge c_k/3)\\
		&\le&  p \max_{1\le l\le p} P\{||\Omega_{n,l}^{*(k)}-I_k ||  \ge c_k/(3M_0)\},
		\eea
		where $\Omega_{n,l}^{*(k)}  = M_l^{(k)}(\Sigma_0)^{-1/2}M_l^{(k)}(S_n)M_l^{(k)}(\Sigma_0)^{-1/2}$ of which distribution is $W_k(I_k/n,n)$.
		Since $c_k/(3M_0)<1$, 
		Lemma 4.4 in \cite{lee2020post} gives
		$$
		P \{||\Omega_{n,l}^{*(k)}-I_k ||\ge c_k/(3M_0)\}\le 2\times 5^k \exp[-n \{c_k/(3M_0)\}^2/2^7 ]
		$$	
		Thus, combining this inequality with \eqref{formula:evminfreq1}, we get
		\bea
		E(I[\lambda_{\min}\{T_k(S_n)\}\le c] ) &\le& E\{ I( ||T_k(S_n)-T_k(\Sigma_0)|| \ge c_k) \} \\
		&\le& p \max_{1\le l\le p} P\{||\Omega_{n,l}^{*(k)}-I_k ||  \ge c_k/(3M_0)\} \\
		&\le&  2 p 5^k \exp(-\lambda n),
		\eea
		for some positive constant $\lambda$ depending only on $M_0$ and $M_1$.

	\end{proof}

	\begin{proof}[Proof of Lemma \ref{main-lemma:upper1}]
		Note, by Lemmas \ref{lemma:taper} and \ref{lemma:pdadjust},  
		\bea
		E(||T_k^{(\epsilon_n)}(S_n)-\Sigma_0||^4) &\le& 
		2^7 E(||T_k(S_n)-\Sigma_0||^4) + 4^3 \epsilon_n^4\\
		&\le& 2^{10} E(||T_k(S_n)-T_k(\Sigma_0)||^4)  +2^{10} ||\Sigma_0-T_k(\Sigma_0)||^4 + 4^3 \epsilon_n^4\\
		&\le& 2^{10}3^4 E(\max_{1\le l\le p}||M_l^{(k)}(S_n-\Sigma_0)||^4)  +2^{10} ||\Sigma_0-T_k(\Sigma_0)||^4 + 4^3 \epsilon_n^4.
		\eea
		It suffices to show $E(\max_{1\le l\le p}||M_l^{(k)}(S_n-\Sigma_0)||^4)$ is bounded above.
		There exist some positive constants $\rho_1$ and $C_1$ depending only on $M_0$ such that
		\bea
		&&E(\max_{1\le l\le p}||M_l^{(k)}(S_n-\Sigma_0)||^4)\\
		&\le &  x^4 + E\{\max_{1\le l\le p}||M_l^{(k)}(S_n-\Sigma_0)||^4 I(\max_{1\le l\le p}||M_l^{(k)}(S_n-\Sigma_0)||>x)\}\\
		&\le & x^4 + p^{1/2}\max_{1\le l\le p}E(||M_l^{(k)}(S_n-\Sigma_0)||^8)^{1/2} P(\max_{1\le l\le p}||M_l^{(k)}(S_n-\Sigma_0)||>x)^{1/2}\\
		&\le & x^4 + C_1 p^{1/2} \frac{5^{k/2} }{n^2} \{2p 5^k \exp(-nx^2 \rho_1)\}^{1/2},
		\eea
		for all $0<x<\rho_1$. The last inequality is satisfied by Lemma \ref{lemma:momentscov} and Lemma 3 of \cite{cai2010optimal}. 
		We set $x= \{2 (\log p +k)\log 5/(n\rho_1)\}^{1/2}$, which can be smaller than an arbitrary positive constant for all sufficiently large $n$. Then, we have
		\bea
		E(\max_{1\le l\le p}||M_l^{(k)}(S_n-\Sigma_0)||^4) &\le&
		\Big\{\frac{2(\log p +k )\log 5 }{n\rho_1}\Big\}^2 + \frac{\sqrt{2}C_1 p 5^k}{n^2}  \exp\{- (\log p+k)\log 5\}\\
		&=& \Big\{\frac{2 (\log p +k )\log 5}{n\rho_1}\Big\}^2  + \frac{\sqrt{2}C_1}{n^2} \exp(-(\log 5 - 1)\log p ),
		\eea
		which is bounded by some positive constant depending only on $M_0$ for all sufficiently large $n$.	 
		
	\end{proof}

	\begin{proof}[Proof of Lemma \ref{main-lemma:upperblockinv}]
		
		Let $c=\lambda_{\min}(\Sigma_0)/2$. Since $\lambda_{\min}\{T_k^{(\epsilon_n)}(S_n)_{11}\}\ge \lambda_{\min}\{T_k^{(\epsilon_n)}(S_n)\}\ge \epsilon_n$, we have 
		\bean
		&&E(|| T_k^{(\epsilon_n)}(S_n)_{11}^{-1} -\Sigma_{0,11}^{-1} ||^2)\nonumber\\
		&\le&
		||\Sigma_{0,11}^{-1}||^2 E(|| T_k^{(\epsilon_n)}(S_n)_{11}^{-1} ||^2 
		|| T_k^{(\epsilon_n)}(S_n)_{11} -\Sigma_{0,11} ||^2 )\nonumber\\
		&\le&  \frac{||\Sigma_{0,11}^{-1}||^2 }{\epsilon_n^2} E(||T_k^{(\epsilon_n)}(S_n)_{11} -\Sigma_{0,11} ||^2 I[\lambda_{\min}\{T_k(S_n)\} \le c]) \label{formula:upperblockinv1}\\
		&&+ \frac{||\Sigma_{0,11}^{-1}||^2 }{c^2}E( ||T_k^{(\epsilon_n)}(S_n)_{11}  -\Sigma_{0,11} ||^2).\label{formula:upperblockinv2}
		\eean
		By Lemmas \ref{main-lemma:evminfreq} and \ref{main-lemma:upper1}, 
		there exist some positive constants $C_1$ and $\lambda_1$ depending only on $M_0$ and $M_1$ such that
		\bean
		\eqref{formula:upperblockinv1}
		&\le& \frac{1}{M_1^2 \epsilon_n^2}E(||T_k^{(\epsilon_n)}(S_n)_{11} -\Sigma_{0,11} ||^4)^{1/2} P[\lambda_{\min}\{T_k(S_n)\} \le c]^{1/2} \nonumber\\
		&\le&  \frac{C_1}{M_1^2 \epsilon_n^2}p^{1/2} 5^{k/2} \exp(-\lambda_1 n) ,\label{formula:inv2}
		\eean
		for all sufficiently large $n$.
		We also have
		\bean
		\eqref{formula:upperblockinv2}
		&\le& 	\frac{1}{M_1^2 c^2}	E( ||T_k^{(\epsilon_n)}(S_n)  -\Sigma_{0} ||^2) \nonumber\\
		&\le& \frac{2^3}{M_1^2 c^2}E( ||T_k(S_n)  -\Sigma_{0} ||^2) + \frac{4}{M_1^2 c^2}\epsilon_n^2 \nonumber\\
		&\le& C_2 \Big\{  \frac{k + \log p}{n} + k^{-2\alpha} + \epsilon_n^2\Big\},\label{formula:inv3}
		\eean
		for some positive constant $C_2$ depending only on $M$, $M_0$, $M_1$ and $\alpha$. The second and last inequalities hold by Lemma \ref{lemma:pdadjust} and Theorem 2 of \cite{cai2010optimal}, respectively.
		Collecting \eqref{formula:inv2} and \eqref{formula:inv3}, we complete the proof.	 
	\end{proof}
	
	\begin{proof}[Proof of Lemma \ref{main-lemma:ineqTV}]
		Let $c=||\Sigma^{-1}||_2 \wedge ||\Sigma'^{-1}||_2  $ and $D = \Sigma' - \Sigma$.
		Without loss of generality we suppose $|| \Sigma^{-1}||_2 = c$.
		The Pinsker inequality gives 
		\bean
		|| P_\Sigma - P_{\Sigma'} ||_1^2 &\le& 2KL(P_{\Sigma'} \mid P_{\Sigma}) \nonumber\\
		&=& n  \{tr(\Sigma' \Sigma^{-1}) - \log \det (\Sigma' \Sigma^{-1}) - p\},\nonumber\\
		&=& n \{tr( D\Sigma^{-1})  - \log \det (I_p + D\Sigma^{-1}) \},\label{formula:ineqTV1}
		\eean
		where $KL(\cdot\mid \cdot)$ is the Kullback-Leibler divergence. Before showing the upper bound of \eqref{formula:ineqTV1}, we show
		\bean
		\log (1+x) \ge x- x^2, ~ \text{for } x>-1/2. \label{formula:ineqTV2}
		\eean
		Let $g(x) = \log (1+x)-x + x^2$. Note  
		\bea
		g(0) &=& 0\\
		g'(x) &<&0,  ~ \text{if } -1/2 < x <0\\
		g'(x) &>&0,  ~ \text{if }  x>0,
		\eea
		where $g'(x)$ is the derivative of $g(x)$.
		Thus, inequality \eqref{formula:ineqTV2} holds.
		
		Now we give the upper bound of \eqref{formula:ineqTV1}. 
		Note that $D\Sigma^{-1}$ is a similar matrix of $\Sigma^{-1/2}D\Sigma^{-1/2}$ and
		the set of eigenvalues of  $D\Sigma^{-1}$ coincides with that of $\Sigma^{-1/2}D\Sigma^{-1/2}$.
		Let $\{\lambda_1,\lambda_2,\ldots,\lambda_p\}$ be the set of eigenvalues of $D\Sigma^{-1}$. 
		If $\lambda_i >-1/2$ for all $i\in\{1,\ldots,p\}$, then inequality \eqref{formula:ineqTV2} gives
		\bea
		-\log \det (I_p + D\Sigma^{-1})  &=&  -\sum_{i=1}^p \log (1 + \lambda_i)\\
		&\le& -\sum_{i=1}^p\lambda_i + \sum_{i=1}^p \lambda_i^2\\
		&=& -tr(D\Sigma^{-1})+ \sum_{i=1}^p \lambda_i^2.
		\eea
		By applying this inequality to \eqref{formula:ineqTV1}, we have
		\bea
		|| P_\Sigma - P_{\Sigma'} ||_1^2 \le n \sum_{i=1}^p \lambda_i^2.
		\eea
		It suffices to show $\max_{i=1,\ldots,p} |\lambda_i| <1/2$ and $\sum_{i=1}^p \lambda_i^2 \le   c^2||D||_F^2$.
		Since $||D||_2< 1/(2c)$,
		\bea
		\max_{i=1,\ldots,p}|\lambda_i| &\le& ||\Sigma^{-1/2}D\Sigma^{-1/2}||_2 \\
		&\le&  ||D||_2 ~||\Sigma^{-1}||_2 \\
		&<&  1/2. 
		\eea
		Next, we show the upper bound of $\sum_{i=1}^p \lambda_i^2$. we have $\Sigma^{-1} = UVU^T$ for some orthogonal matrix $U$ and some diagnoal matrix $V$, and
		\bea
		\sum_{i=1}^p \lambda_i^2  &=& || \Sigma^{-1/2}D\Sigma^{-1/2}||_F^2\\
		&=&  || V U^T DU V||_F^2\\
		&\le&  ||V||^2 ~ || U^T DU||_F^2 \\
		&=& ||\Sigma^{-1}||^2 ~ ||D ||_F^2.
		\eea
		The second and third equalities hold since $U$ is an orthogonal matrix. The first inequality holds since $V$ is a diagonal matrix.	  
		
	\end{proof}

	\se{Proofs of Lemmas in Section S4}\label{sec:proofmis}
	
	In this section, we prove Lemmas \ref{lemma:upper2}-\ref{lemma:invfactor} which are presented in Section \ref{sec:thm6}. For these proofs we present Lemmas \ref{lemma:IWmomentbound} and \ref{lemma:IWconcent}.
	
	\begin{lemma}\label{lemma:IWmomentbound}
		Let $n$, $k$ and $r$ be positive integers and $M_0$ be a positive real number.
		Suppose $\Sigma_0\in\calC_k$ and let the prior distribution $\pi$ on $\calC_k$ be $IW(A_n,\tau_n)$ for $A_n\in\calC_k$ and $\tau_n>2k$.
		If $(\tau_n-2k)\vee||A_n||\vee k = o(n)$ and $||\Sigma_0||\le M_0$, then there exists some positive constant $C$ depending only on $M_0$ and $r$ such that
		\bea
		E_{\Sigma_0}\{E^{\pi}(||\Sigma-\Sigma_0||^r\mid \bbZ_n)\} \le C k^{r} \frac{5^{3k/2}}{n^{r/2}} ,
		\eea
		for all sufficiently large $n$.
	\end{lemma}
	
	\begin{proof}
		Let $\hat{\Sigma} = (nS_n+A_n)/(n+\tau_n-k-1)$. We have  
		\bean
		E_{\Sigma_0}\{E^{\pi}(||\Sigma-\Sigma_0||^r\mid \bbZ_n)\} &\le& 2^{r-1} E_{\Sigma_0}\{E^{\pi}(||\Sigma-\hat{\Sigma}||^r\mid \bbZ_n)\} \label{formula:IWmomentbound1}\\ &&+2^{r-1}E_{\Sigma_0}(||\hat{\Sigma}-\Sigma_0||^r).\label{formula:IWmomentbound2}
		\eean
		For the upper bound of \eqref{formula:IWmomentbound2}, we have
		\bea
		E_{\Sigma_0}(||\hat{\Sigma}-\Sigma_0||^r ) &\le& 
		2^{r-1} E_{\Sigma_0}\Big(\Big|\Big|\frac{n}{n+\tau_n-k-1}(S_n-\Sigma_0)\Big|\Big|^r\Big) 
		+
		2^{r-1} \Big|\Big|\frac{A_n -(\tau_n-k-1)\Sigma_0}{n+\tau_n-k-1}\Big|\Big|^r.
		\eea
		Since $(\tau_n -k -1)/n = O(k/n)$ and $||A_n||=o(n)$, there exists some positive constant $C_1$ depending only on $M_0$ and $r$ such that
		\bea
		\Big|\Big|\frac{A_n -(\tau_n-k-1)\Sigma_0}{n+\tau_n-k-1}\Big|\Big|^r &\le& C_1\Big(\frac{k}{n}\Big)^{r},
		\eea
		for all sufficiently large $n$.
		Lemma \ref{lemma:momentscov} gives  
		\bea
		E_{\Sigma_0}\Big(\Big|\Big|\frac{n}{n+\tau_n-k-1}(S_n-\Sigma_0)\Big|\Big|^r\Big)  \le C_2 \frac{5^k}{n^{r/2}},
		\eea
		for some positive constant $C_2$ depending only on $M_0$ and $r$. Thus, we have
		\bean\label{formula:IWmomentbound8}
		\eqref{formula:IWmomentbound2} \le 4^{r-1} C_1\Big(\frac{k}{n}\Big)^r + 4^{r-1} C_2 \frac{5^k}{n^{r/2}}.
		\eean
		
		Next, we show the upper bound of \eqref{formula:IWmomentbound1}. We have
		\bean \label{formula:IWmomentbound3}
		&&E_{\Sigma_0}\{E^{\pi}(||\Sigma-\hat{\Sigma}||^r\mid \bbZ_n)\} \nonumber\\
		&=& E_{\Sigma_0}\{E^{\pi}(||   \hat{\Sigma} ^{1/2}  (\Omega_{n^*}^{(k)} )^{-1}(I_k - \Omega_{n^*}^{(k)} )\hat{\Sigma} ^{1/2}   ||^r\mid \bbZ_n)\} \nonumber\\
		&\le& 
		E_{\Sigma_0}\{||\hat{\Sigma} ||^{r} E^{\pi}(||(\Omega_{n^*}^{(k)})^{-1}||^r  ||I_k-\Omega_{n^*}^{(k)}||^r\mid \bbZ_n)\} \nonumber\\
		&\le& 
		E_{\Sigma_0}\{||\hat{\Sigma} ||^{r} E^{\pi}(||(\Omega_{n^*}^{(k)})^{-1}||^{2r}\mid \bbZ_n)^{1/2}  E^{\pi}(||I_k-\Omega_{n^*}^{(k)}||^{2r}\mid \bbZ_n)^{1/2}\},
		\eean
		where $\Omega_{n^*}^{(k)} = \hat{\Sigma}^{1/2}\Sigma^{-1}\hat{\Sigma}^{1/2}$. 
		Note that $[\Sigma\mid \bbZ_n] \sim IW_k (A_n+nS_n, n+\tau_n)$. 
		Since $[\Sigma^{-1}\mid \bbZ_n] \sim W_k\{(A_n+nS_n)^{-1}, n+\tau_n -k-1\}$ \citep{press2012applied}, we have
		$(n+\tau_n-k-1) \Omega_{n^*}^{(k)}\sim W_k(I_k,n+\tau_n-k-1)$.
		Lemma \ref{lemma:momentscov} gives
		\bean\label{formula:IWmomentbound4}
		E^\pi(||I_k-\Omega_{n^*}^{(k)}||^{2r}\mid S_n)^{1/2} &\le& C_3\frac{5^{k/2}}{(n+\tau_n-k-1)^{r/2}}\nonumber\\
		&\le&C_3\frac{5^{k/2}}{n^{r/2}} ,
		\eean
		for some positive constant $C_3$ depending only on $M_0$ and $r$. 
		Since $[(\Omega_{n^*}^{(k)})^{-1} \mid \bbZ_n]\sim IW_k\{(n+\tau_n-k-1)I_k,n+\tau_n\}$, we have
		\bean \label{formula:IWmomentbound5}
		E^{\pi}(||(\Omega_{n^*}^{(k)})^{-1}||^{2r}\mid \bbZ_n) &\le& 
		E^{\pi}([\sum_{i=1}^k \{(\Omega_{n^*}^{(k)})^{-1}\}_{ii}]^{2r} \mid \bbZ_n)\nonumber\\
		&\le& k^{2r-1} \sum_{i=1}^k  E^{\pi}[\{(\Omega_{n^*}^{(k)})^{-1}\}_{ii}^{2r} \mid \bbZ_n]\nonumber\\
		&\le& k^{2r}(n+\tau_n-k-1)^{2r}\frac{1}{2^{2r}} \frac{\Gamma\{(n+\tau_n-2k)/2-2r\} }{\Gamma\{(n+\tau_n-2k)/2\} }\nonumber\\
		&=& k^{2r}(n+\tau_n-k-1)^{2r}\frac{1}{2^{2r}} \prod_{i=1}^{2r} \{(n+\tau_n-2k)/2-i\}^{-1} \nonumber\\
		&\le& \Big(\frac{k}{2}\Big)^{2r} \Big\{\frac{n+\tau_n-k-1}{(n+\tau_n-2k)/2-2r}\Big\}^{2r},
		\eean
		where $\{(\Omega_{n^*}^{(k)})^{-1}\}_{ii}$ is the $i$th diagonal element of $(\Omega_{n^*}^{(k)})^{-1}$. The third inequality holds by Lemma 4.9 of \cite{lee2020post}. By collecting \eqref{formula:IWmomentbound3}, \eqref{formula:IWmomentbound4} and \eqref{formula:IWmomentbound5}, we have that there exists  some positive constant $C_4$ depending only on $M_0$ and $r$ such that
		\bean
		\eqref{formula:IWmomentbound1}
		&\le& C_4k^{r}\frac{5^{k/2}}{n^{r/2}}E_{\Sigma_0}(||\hat{\Sigma} ||^{r}),\label{formula:IWmomentbound6}
		\eean
		for all sufficiently large $n$.
		
		For the upper bound of $E_{\Sigma_0}(||\hat{\Sigma} ||^{r})$,
		there exist some positive constants $C_5$ and $C_6$ depending only on $M_0$ and $r$ such that
		\bean\label{formula:IWmomentbound7}
		E_{\Sigma_0}(||\hat{\Sigma} ||^{r}) &\le& 
		2^{r-1} E_{\Sigma_0}(||S_n ||^{r}) +  \frac{2^{r-1}}{n^{r}}||A_n||^{r}\nonumber\\
		&\le&     4^{r-1} E_{\Sigma_0}(||S_n-\Sigma_0 ||^{r}) + 4^{r-1}||\Sigma_0||^{r} +  \frac{2^{r-1}}{n^{r}}||A_n||^{r}\nonumber\\
		&\le&  C_5 + C_6\frac{5^k}{n^{r/2}},
		\eean
		for all sufficiently large $n$. The last inequality holds by Lemma \ref{lemma:momentscov}.
		Collecting \eqref{formula:IWmomentbound6} and \eqref{formula:IWmomentbound7}, we have 
		\bea
		\eqref{formula:IWmomentbound1}\le C_7 k^r\Big(  \frac{5^{k/2}}{n^{r/2}} + \frac{5^{3k/2}}{n^r}   \Big),
		\eea
		for some positive constant $C_7$ depending only on $M_0$ and $r$. Combining this inequality with \eqref{formula:IWmomentbound8}, we complete the proof.	 
		
	\end{proof}
	
	\begin{lemma}\label{lemma:IWconcent}
		Suppose the same setting of Lemma \ref{lemma:IWmomentbound}.
		If $(\tau_n-2k)\vee||A_n||\vee k = o(n)$, $||\Sigma_0||\le M_0$ and $x<4||\Sigma_0||$, then there exist some positive constants $C$ and $\lambda$ depending only on $M_0$ such that
		\bea
		E_{\Sigma_0}\{P^\pi(||\Sigma-\Sigma_0||>x\mid \bbZ_n) \} \le C 5^k \{\exp(- \lambda n x^2) + \exp(-\lambda n)\},
		\eea
		for all sufficiently large $n$.
		
	\end{lemma}
	\begin{proof}
		Let $\hat{\Sigma} = (nS_n+A_n)/(n+\tau_n-k-1)$. We have 
		\bea
		E_{\Sigma_0}\{P^\pi(||\Sigma-\Sigma_0||>x \mid \bbZ_n)\} \le E_{\Sigma_0}\{P^\pi(||\Sigma-\hat\Sigma||>x/2\mid \bbZ_n)\} +P_{\Sigma_0}(||\hat\Sigma-\Sigma_0||>x/2).
		\eea
		The upper bound of  $P_{\Sigma_0}(||\hat\Sigma-\Sigma_0||>x/2)$ is given as 
		\bea 
		&&P_{\Sigma_0}(||\hat\Sigma-\Sigma_0||>x/2)\\
		&=&
		P\Big(\Big|\Big|\frac{n}{n+\tau_n-k-1} (S_n-\Sigma_0) + \frac{A_n-(\tau_n-k-1)\Sigma_0}{n+\tau_n-k-1}\Big|\Big|>x/2\Big) \\
		&\le& 
		P(||S_n-\Sigma_0||>x/4) + I\Big( ||A_n||/n + \frac{\tau_n-k-1}{n+\tau_n-k-1}||\Sigma_0|| > x/4\Big)\\
		&\le&P(||\Sigma_0||~||\Omega_{n}^{(k)} - I_k||>x/4) + I\Big( ||A_n||/n + \frac{\tau_n-k-1}{n+\tau_n-k-1}||\Sigma_0|| > x/4\Big)\\
		&=&P\{||\Omega_{n}^{(k)} - I_k||>x/(4||\Sigma_0||)\} + I\Big( ||A_n||/n + \frac{\tau_n-k-1}{n+\tau_n-k-1}||\Sigma_0|| > x/4\Big),
		\eea
		where $\Omega_{n}^{(k)}= \Sigma_0^{-1/2}S_n\Sigma_0^{-1/2}$, of which distribution is $W_k(I_k/n,n)$.
		Since $||A_n|| = o(n)$ and $(\tau_n-k-1)/(n+\tau_n-k-1)=o(1)$, we have
		\bea
		I\Big( ||A_n||/n + \frac{\tau_n-k-1}{n+\tau_n-k-1}||\Sigma_0|| > x/4\Big)=0,
		\eea
		for all sufficiently large $n$.
		Since $x<4||\Sigma_0||$, Lemma 4.4 in \cite{lee2020post} gives
		\bea
		P\{||\Omega_{n}^{(k)} - I_k||>x/(4||\Sigma_0||)\} &\le& 2 \times 5^k \exp\{-n x^2/(2^{11}||\Sigma_0||^2)\}.
		\eea
		Thus, we have 
		\bean
		P_{\Sigma_0}(||\hat\Sigma-\Sigma_0||>x/2) \le  2 \times 5^k \exp\{-n x^2/(2^{11}||\Sigma_0||^2)\}, \label{formula:IWconcent1}
		\eean
		for all sufficiently large $n$.
		
		Next, we show the upper bound of $E_{\Sigma_0}\{P^\pi(   ||\Sigma-\hat{\Sigma}||>x \mid \bbZ_n)\}$. For arbitrary positive constants $R_1$ and $R_2$, we have 
		\bean
		&&E_{\Sigma_0}\{P^\pi(   ||\Sigma-\hat{\Sigma}||>x/2 \mid \bbZ_n)\} \nonumber\\
		&=& 
		E\{P^\pi(   || \hat{\Sigma} ^{1/2}  (\Omega_{n^*}^{(k)} )^{-1}(I_k - \Omega_{n^*}^{(k)} )\hat{\Sigma} ^{1/2}||>x/2 \mid \bbZ_n)  \}  \nonumber\\
		&\le& E\{ P^\pi( ||\hat{\Sigma} || ~ || (\Omega_{n^*}^{(k)} )^{-1}||~  ||I_k -\Omega_{n^*}^{(k)}||>x \mid \bbZ_n) \}\nonumber\\
		&\le& E\{ P^\pi( R_1 ||(\Omega_{n^*}^{(k)})^{-1}||~ ||I_k - \Omega_{n^*}^{(k)}|| >x \mid \bbZ_n)\} + P( ||\hat{\Sigma} ||>R_1)\nonumber\\
		&\le& E\{P^\pi( R_1R_2||I_k - \Omega_{n^*}^{(k)}|| >x\mid\bbZ_n)\}+E[P^\pi\{\lambda_{\min}(\Omega_{n^*}^{(k)}) <R_2^{-1}\mid\bbZ_n\}] \nonumber\\
		&&+ P( ||\hat{\Sigma} ||>R_1),\label{formula:IWconcent2}
		\eean
		where $\Omega_{n^*}^{(k)} = \hat{\Sigma}^{1/2}\Sigma^{-1}\hat{\Sigma}^{1/2}$, of which distribution is $W_k(I_k/(n+\tau_n-k-1),n+\tau_n-k-1)$.
		If we set $R_2^{-1}\le [ 1-\{k/(n + \tau_n-k-1)\}^{1/2}]^2/4$, Lemma 4.2 in \cite{lee2020post} gives that there exists some positive constant $\lambda_1$ depending only on $M_0$ and $M_1$ such that
		\bean
		P^\pi\{\lambda_{\min}(\Omega_{n}^{(k)}) <R_2^{-1}\mid\bbZ_n\} &\le&
		2\exp\{-(n+\tau_n-k-1)[1-\{k/(n+\tau_n-k-1)\}^{1/2}]^2/8\}\nonumber\\
		&\le& 2\exp(-\lambda_1n),\label{formula:IWconcent3}
		\eean
		for all sufficiently large $n$.
		If $R_1 \in (||\Sigma_0||, 3||\Sigma_0||)$, then  
		\bean
		P( ||\hat{\Sigma} ||>R_1)  &\le&
		P( ||\hat{\Sigma}-\Sigma_0 ||>R_1-||\Sigma_0||)\nonumber\\
		&\le& 2 \times 5^k \exp\{-n (R_1-||\Sigma_0||)^2/(2^9||\Sigma_0||^2)\},\label{formula:IWconcent5}
		\eean
		where the second inequality holds by \eqref{formula:IWconcent1}.
		Since we can set $R_2$ large enough to satisfy $x/(R_1 R_2)<1$, Lemma 4.4 in \cite{lee2020post} gives 
		\bean
		P^\pi( R_1 R_2 ||I_k - \Omega_{n^*}^{(k)}|| >x \mid \bbZ_n) 
		&=&
		P^\pi( ||I_k - \Omega_{n^*}^{(k)}|| >x/(R_1 R_2)\mid\bbZ_n) \nonumber\\
		&\le& 2\times 5^k  \exp\Big\{-\frac{(n+\tau_n-k-1)x^2}{2^7(R_1 R_2 )^2}\Big\}.\label{formula:IWconcent6}
		\eean 
		By collecting the inequalities \eqref{formula:IWconcent2}-\eqref{formula:IWconcent6},
		we obtain that there exist some positive constants $C_1$ and $\lambda_2$ depending only on $M_0$ and $M_1$ such that
		\bea
		E_{\Sigma_0}\{P^\pi (   ||\Sigma-\hat{\Sigma}||>x/2 \mid \bbZ_n)  \}    \le 
		C_1 5^k\{\exp(-\lambda_2 nx^2 )+\exp(-\lambda_2 n) \},
		\eea
		for all sufficiently large $n$. Combining this inequality with \eqref{formula:IWconcent1}, we complete the proof.	 
	\end{proof}

	Next, we prove the Lemmas \ref{lemma:upper2}-\ref{lemma:invfactor}.
	
	\begin{proof}[Proof of Lemma \ref{lemma:upper2}]
		By Lemma \ref{lemma:pdadjust} and \ref{lemma:taper}, we have
		\bean
		&&E_{\Sigma_{0}}\{E^{\pi^i}(||T_k^{(\epsilon_n)}(\Sigma)-\Sigma_0||^4\mid\bbZ_n)\}\nonumber\\
		&\le& 2^7E_{\Sigma_{0}}(E^{\pi^i}[||T_k(\Sigma)-\Sigma_0||^4\mid\bbZ_n])+ 2^6\epsilon_n^4 \nonumber\\
		&\le& 2^{10}E_{\Sigma_{0}}\{E^{\pi^i}(||T_k(\Sigma-\Sigma_0)||^4\mid\bbZ_n)\}+
		2^{10}||T_k(\Sigma_0) -\Sigma_0||^4 + 2^6\epsilon_n^4 \nonumber\\
		&\le& 2^{10} 3^4 E_{\Sigma_{0}}\{E^{\pi^i}(\max_{1\le i \le p}||M_i^{(k)} (\Sigma-\Sigma_0) ||^4\mid\bbZ_n)\}+ 2^{10}\{M (\lfloor k/2 \rfloor)^{-\alpha}\}^4+ 2^6\epsilon_n^4.\label{formula:upper2_2}
		\eean
		Next, we have
		\bea
		&&E_{\Sigma_{0}}\{E^{\pi^i}(\max_{1\le i \le p}||M_i^{(k)} (\Sigma-\Sigma_0) ||^4\mid\bbZ_n)\}\\
		&\le& x^4 + E_{\Sigma_{0}}\{E^{\pi^i}(\max_{1\le i \le p}||M_i^{(k)} (\Sigma-\Sigma_0) ||^4 I(\max_{1\le i \le p}||M_i^{(k)} (\Sigma-\Sigma_0) ||>x)\mid\bbZ_n)\}\\
		&\le& x^4 + E_{\Sigma_{0}}\{E^{\pi^i}(\max_{1\le i \le p}||M_i^{(k)} (\Sigma-\Sigma_0) ||^8 \mid\bbZ_n)\}^{1/2} E_{\Sigma_{0}}\{P^{\pi^i}(\max_{1\le i \le p}||M_i^{(k)} (\Sigma-\Sigma_0) ||>x\mid\bbZ_n)\}^{1/2}\\
		&\le& x^4 + p \max_{1\le i \le p} E_{\Sigma_{0}}\{E^{\pi^i}(||M_i^{(k)} (\Sigma-\Sigma_0) ||^8\mid\bbZ_n)\}^{1/2} \max_{1\le i \le p} E_{\Sigma_{0}}\{P^{\pi^i}(||M_i^{(k)} (\Sigma-\Sigma_0) ||>x\mid\bbZ_n)\}^{1/2}.
		\eea
		Note that $M_l^{(k)}(\Sigma_0)\in \calF_{k,\alpha}(M,M_0,M_1)$ and $[M_l^{(k)}(\Sigma)\mid \bbZ_n] \sim IW_k(M_l^{(k)}(nS_n+A_n),n+\nu_n-2p+2k)$ \citep{press2012applied}. If $x<4||M_l^{(k)}(\Sigma_0)||$, then Lemmas \ref{lemma:IWmomentbound} and \ref{lemma:IWconcent} give that there exist some positive constants $C_1$ and $\lambda_1$ depending only on $M_0$ and $M_1$ such that
		\bean
		&&p \max_{1\le i \le p} E_{\Sigma_{0}}\{E^{\pi^i}(||M_i^{(k)} (\Sigma-\Sigma_0) ||^8\mid\bbZ_n)\}^{1/2} \max_{1\le i \le p} E_{\Sigma_{0}}\{P^{\pi^i}(||M_i^{(k)} (\Sigma-\Sigma_0) ||>x\mid\bbZ_n)\}^{1/2} \nonumber\\
		&\le& C_1p  k^4 5^{5k/2} n^{-2} \{\exp(-\lambda_1 nx^2) + \exp(-\lambda_1 n)\},\label{formula:upper2_1}
		\eean
		for all sufficiently large $n$.
		By setting $x=3\log p/(\lambda_1 n) $, which is smaller than an arbitrary positive constant for sufficiently large $n$, We show that \eqref{formula:upper2_1} is bounded above by some positive constant for all sufficiently large $n$. Since $\epsilon_n=O(1)$, \eqref{formula:upper2_2} is bounded by some positive constant depending only on $M_0$, $M_1$, $M$ and $\alpha$.	 
		
	\end{proof}

	\begin{proof}[Proof of Lemma \ref{lemma:evmin}]
		%In this proof, we denote $\hat\Sigma = (nS_n + A_n)/(n+\nu_n-2p+k-1)$.

		By inequality \eqref{formula:evminfreq1}, we have
		\bea
		I[\lambda_{\min}\{T_k(\Sigma)\} \le  c]
		&\le&
		I( ||T_k(\Sigma-\Sigma_0)|| \ge c_k )
		\eea
		where $c_k=\lambda_{\min}(\Sigma_0) - M(\lfloor k/2\rfloor )^{-\alpha} -c$. Since $c \le \lambda_{\min}(\Sigma_0)/2$ and $\lfloor k/2\rfloor> \{4M/\lambda_{\min}(\Sigma_0)\}^{1/\alpha}$, 
		we have $c_k\ge  \lambda_{\min}(\Sigma_0)/4$. 
		There exist some positive constants $C$ and $\lambda$ depending only on $M_0$ and $M_1$ such that
		\bea
		E_{\Sigma_0}(P^{\pi^i}[\lambda_{\min}\{T_k(\Sigma)\} \le  c \mid\bbZ_n] ) &\le&
		E_{\Sigma_0}\{P^{\pi^i}(\lambda_{\min}( ||T_k(\Sigma-\Sigma_0)|| \ge c_k \mid\bbZ_n)\} \\
		&\le&   p \max_{1 \le l\le p}E_{\Sigma_0}\{P^{\pi^i}( ||M_l^{(k)}(\Sigma-\Sigma_0) ||   \ge c_k/3 \mid\bbZ_n)\}\\
		&\le&    p C 5^k [\exp\{-\lambda n (c_k/3)^2\} + \exp(-\lambda n)],
		\eea
		for all sufficiently large $n$. The second inequality is satisfied by Lemma \ref{lemma:taper}. 
		For the third inequality, Lemma \ref{lemma:IWconcent} is used. Note  $M_l^{(k)}(\Sigma_0)\in \calF_{k,\alpha}(M,M_0,M_1)$, $[M_l^{(k)}(\Sigma)\mid \bbZ_n] \sim IW_k\{M_l^{(k)}(nS_n+A_n),n+\nu_n-2p+2k\}$ and 
		$c_k/3$ satisfies
		\bea
		c_k/3 &\le& \lambda_{\min}(\Sigma_0)\\
		&\le& \lambda_{\min}\{M_l^{(k)}(\Sigma_0)\}\\
		&\le& 4||M_l^{(k)}(\Sigma_0)||,
		\eea
		for all $l\in \{1,2,\ldots, p\}$.	 
	\end{proof}

	%%%%%%%%%%%%%%%%%

	\begin{proof}[Proof of Lemma \ref{lemma:TPPPupper}]
		By Lemmas \ref{lemma:pdadjust} and \ref{lemma:taper}, we have
		\bean\label{formula:TPPPupper1}
		&&E_{\Sigma_0}\{ E^{\pi^i} ( ||\Sigma_0 - T_k^{(\epsilon_n)}(\Sigma) ||^2 \mid\bbZ_n)  \}\nonumber\\ 
		&\le& 	2^3 E_{\Sigma_0}\{ E^{\pi^i} ( ||\Sigma_0 - T_k(\Sigma) ||^2 \mid\bbZ_n)  \} + 4 \epsilon_n^2\nonumber\\
		&\le& 	2^4 E_{\Sigma_0}\{ E^{\pi^i} ( ||T_k(\Sigma_0) - T_k(\Sigma) ||^2 \mid\bbZ_n)  \} + 2^4||T_k(\Sigma_0) - \Sigma_0 ||^2+ 4 \epsilon_n^2\nonumber\\
		&\le& 	2^4 3^2 E_{\Sigma_0}\{ E^{\pi^i} ( \max_{1 \le l\le p}||M_l^{(k)}(\Sigma_0-\Sigma) ||^2 \mid\bbZ_n)  \} + 2^4 (M \lfloor k/2\rfloor^{-\alpha})^{2}+ 4 \epsilon_n^2.
		\eean
		Note  $M_l^{(k)}(\Sigma_0)\in \calF_{k,\alpha}(M,M_0,M_1)$, $[M_l^{(k)}(\Sigma)\mid \bbZ_n] \sim IW_k\{M_l^{(k)}(nS_n+A_n),n+\nu_n-2p+2k\}$. Thus, for an arbitrary positive real number $x$ with $x<4\min_{1\le l\le p}||M_l^{(k)}(\Sigma_0)||$,
		there exist some positive constants $C_1$ and $\lambda_1$ depending only on $M_0$ such that
		\bea
		&&E_{\Sigma_0}\{ E^{\pi^i} ( \max_{1 \le l\le p}||M_l^{(k)}(\Sigma_0-\Sigma) ||^2 \mid\bbZ_n)  \} \\
		&\le& x^2 + E_{\Sigma_0}[E^{\pi^i}\{ \max_{1 \le l\le p}||M_l^{(k)}(\Sigma_0-\Sigma) ||^2I( \max_{1 \le l\le p}||M_l^{(k)}(\Sigma_0-\Sigma) || >x)\mid\bbZ_n\}  ]\\
		&\le&x^2 + p \max_{1 \le l\le p} E_{\Sigma_0}\{E^{\pi^i} (||M_l^{(k)}(\Sigma_0-\Sigma) ||^4 \mid\bbZ_n    )\}^{1/2} 
		\max_{1 \le l\le p} E_{\Sigma_0}\{E^{\pi^i}( ||M_l^{(k)}(\Sigma_0-\Sigma) || >x  \mid\bbZ_n    )\}^{1/2} \\
		&\le& x^2 + C_1pk^2 \frac{5^3}{n}  5^k \{\exp(-\lambda_1 n x^2)+ \exp(-\lambda_1 n)\},
		\eea
		for all sufficiently large $n$. The last inequality holds by Lemmas .
		By setting $x^2=  2(\log p + k\log 5)/n\lambda_1$, which is lower than an arbitrary positive constant for all sufficiently large $n$, we have 
		\bea
		E_{\Sigma_0}\{ E^{\pi^i} ( \max_{1 \le l\le p}||M_l^{(k)}(\Sigma_0-\Sigma) ||^2 \mid\bbZ_n)  \} \le C_2 \Big( \frac{\log p+k}{n}   \Big),
		\eea
		for some positive constants $C_2$ depending only on $M_0$. 
		Combining this inequality with \eqref{formula:TPPPupper1}, we complete the proof.	 
	\end{proof}

	\begin{proof}[Proof of Lemma \ref{lemma:invfactor}]
		Let $c=\lambda_{\min}(\Sigma_0)/2$. Since $\lambda_{\min}\{T_k^{(\epsilon_n)}(\Sigma)_{11}\}\ge \lambda_{\min}\{T_k^{(\epsilon_n)}(\Sigma)\}\ge \epsilon_n$, we have
		\bean
		&&E_{\Sigma_0}\{ E^{\pi^i}(|| \Sigma_{0,11}^{-1} - T_k^{(\epsilon_n)}(\Sigma)_{11}^{-1}||^2\mid \bbZ_n)\} \nonumber \\
		&\le& ||\Sigma_{0}^{-1}||^2 E_{\Sigma_0}\{ E^{\pi^i}(||T_k^{(\epsilon_n)}(\Sigma)_{11}^{-1}||^2~|| \Sigma_{0,11} - T_k^{(\epsilon_n)}(\Sigma)_{11}||^2 \mid \bbZ_n)\} \nonumber\\
		&\le& \frac{1}{M_1^2c^2}E_{\Sigma_0}\{ E^{\pi^i}(|| \Sigma_{0,11} - T_k^{(\epsilon_n)}(\Sigma)_{11}||^2 \mid \bbZ_n)\} \nonumber\\
		&&+\frac{1}{M_1^2\epsilon_n^2}E_{\Sigma_0}\{ E^{\pi^i}(|| \Sigma_{0,11} - T_k^{(\epsilon_n)}(\Sigma)_{11}||^2 I[\lambda_{\min}\{T_k(\Sigma)\}\le c ] \mid \bbZ_n)\} \nonumber\\
		&\le& \frac{1}{M_1^2c^2}E_{\Sigma_0}\{ E^{\pi^i}(|| \Sigma_{0,11} - T_k^{(\epsilon_n)}(\Sigma)_{11}||^2 \mid \bbZ_n)\} \label{formula:invfactor1}\\
		&&+\frac{1}{M_1^2\epsilon_n^2}E_{\Sigma_0}\{ E^{\pi^i}(|| \Sigma_{0,11} - T_k^{(\epsilon_n)}(\Sigma)_{11}||^4\mid\bbZ_n)\}^{1/2}  E_{\Sigma_0}\{E^{\pi^i}(I[\lambda_{\min}\{T_k(\Sigma)\}\le c ] \mid \bbZ_n)\}^{1/2} .\label{formula:invfactor3}
		\eean
		For the upper bound of \eqref{formula:invfactor1}, Lemma \ref{lemma:TPPPupper} gives
		\bea
		E_{\Sigma_0}\{ E^{\pi^i}(|| \Sigma_{0,11} - T_k^{(\epsilon_n)}(\Sigma)_{11}||^2 \mid \bbZ_n)\} &\le&	E_{\Sigma_0}\{ E^{\pi^i}(|| \Sigma_{0} - T_k^{(\epsilon_n)}(\Sigma)||^2 \mid \bbZ_n)\} \\
		&\le& C_1\Big( \frac{k+\log p}{n} + k^{-2\alpha} +\epsilon_n^2 \Big),
		\eea
		for some positive constant $C_1$ depending only on $M$, $M_0$, $M_1$ and $\alpha$.
		Lemmas \ref{lemma:upper2} and \ref{lemma:evmin} gives that there exist positive constants $C_2$ and $\lambda$ depending only on $M$, $M_0$, $M_1$ and $\alpha$ such that    
		\bea
		\eqref{formula:invfactor3}&\le&
		C_2 \frac{p^{1/2} 5^{k/2}}{\epsilon_n^2} \exp(-\lambda_1 n) ,   
		\eea
		for all sufficiently large $n$. 
		Collecting the upper bound of  \eqref{formula:invfactor1} and \eqref{formula:invfactor3}, we complete the proof.

	\end{proof}

\bibliography{cov-ppp}